\newtheorem {theorem*}{Theorem}
\newtheorem {theorem} {Theorem}
\newtheorem{lemma}{Lemma}
\newtheorem{corollary}{Corollary}
\numberwithin{equation}{section}
\numberwithin{lemma}{section}
\numberwithin{theorem}{section}
\numberwithin{proposition}{section}
\numberwithin{corollary}{section}
\begin{document}
\arraycolsep=1pt

\title{\Large\bf Bounded, compact and Schatten class Hankel operators on Fock-type spaces
\footnotetext{\hspace{-0.35cm}
			\endgraf{\it E-mail: zhichengzeng@e.gzhu.edu.cn (Zhicheng Zeng)}
			\endgraf \hspace{1.1cm} {\it wxf@gzhu.edu.cn (Xiaofeng Wang)}
			\endgraf \hspace{1.1cm} {\it huzj@zjhu.edu.cn (Zhangjian Hu)}
			\endgraf X. Wang was supported in part by the National Natural Science Foundation of China (11971125) and Z. Hu was supported in part by the National Natural Science Foundation of China (12071130, 12171150).
	}}
	\author{Zhicheng Zeng$^{1}$, Xiaofeng Wang$^{1, }$\thanks{Corresponding author}, Zhangjian Hu$^{2}$\\
		\small \em 1. School of Mathematics and Information Science and Key Laboratory of Mathematics and Interdisciplinary\\ \small \em Sciences of the Guangdong Higher Education Institute, Guangzhou University, Guangzhou, China\\
		\small \em 2. Department of Mathematics, Huzhou University, Huzhou, Zhejiang, China}

\date{ }
\maketitle

\vspace{-0.8cm}

\begin{center}
\begin{minipage}{16cm}\small
{\noindent{\bf Abstract} \quad In this paper, we consider Hankel operators, with locally integrable symbols, densely defined on a family of Fock-type spaces whose weights are $C^3$-logarithmic growth functions with mild smoothness conditions. It is shown that a Hankel operator is bounded on such a Fock space if and only if its symbol function has bounded distance to analytic functions BDA which is initiated by Luecking(J. Funct. Anal. 110:247-271, 1992). We also characterize the compactness and Schatten class membership of Hankel operators. Besides, we give characterizations of the Schatten class membership of Toeplitz operators with positive measure symbols for the small exponent $0<p<1$. Our proofs depend strongly on the technique of H\"{o}mander's $L^2$ estimates for the $\overline{\partial}$ operator and the decomposition theory of BDA spaces as well as integral estimates involving the reproducing kernel.
\endgraf{\bf Mathematics Subject Classification (2010).}\quad Primary 47B35, 30H20; Secondary 32A36, 32A37.
\endgraf{\bf Keywords.}\quad Fock-type space, Hankel operator, Schatten class.}
\end{minipage}
\end{center}

\section{Introduction}\label{s1}
Let $\Psi:[0,\infty)\to[0,\infty)$ be a $C^3$-weight function such that
\begin{align}\label{cdt1}
  \Psi '(x)>0,\quad \Psi ''(x)\geq 0\textup{ and } \Psi '''(x)\geq 0 \textup{ for } x\in [0,\infty).
\end{align}
We refer to such a function as a logarithmic growth function. Suppose that there exists a real number $\eta<1/2$ satisfying
\begin{align}\label{cdt2}
  \Phi''(x)=O(x^{-\frac{1}{2}}[\Phi'(x)]^{1+\eta})\textup{ as } x\to \infty,
\end{align}
where $\Phi(x):=x\Psi '(x)$. Let $n\geq 1$ be an integer. When $n>1$, we also assume that
\begin{align}\label{cdt3}
  \Psi''(x)=O(x^{-\frac{1}{2}}[\Psi'(x)]^{1+\eta})\textup{ as } x\to \infty,
\end{align}
for some $\eta<1/2.$
 Let $\mathbb{C}^n$ be the $n$-dimensional complex Euclidean space and $dV$ be the usual Lebesgue measure on $\mathbb{C}^n$. Set $d\mu_\Psi(z)=e^{-\Psi(|z|^2)}\,dV(z)$. The weighted Lebesgue space $L^2_\Psi$ consists of all Lebesgue measurable functions $f$ on $\mathbb{C}^n$ with the norm
$$\|f\|_\Psi:=\left\{\int_{\mathbb{C}^n}|f(z)|^2\,d\mu_\Psi(z)\right\}^{\frac{1}{2}}<\infty.$$
The inner product in $L^2_\Psi$ is given by
$$\langle f,g\rangle=\int_{\mathbb{C}^n}f(z)\overline{g(z)}d\mu_\Psi(z).$$
The Fock-type space $F^2_\Psi$ consists of all holomorphic functions in $L^2_\Psi$ with inherited norm and inner product. The space $F_\Psi^2$ was first studied in \cite{SY}. Let $K(z,w)$ be the reproducing kernel. For simplicity, we write $K_w(z):=K(z,w),$ and denote by $k_w$ the normalized kernel of $K_w$. Let $P$ be the orthogonal projection from $L^2_\Psi$ onto $F^2_\Psi$, then
$$Pg(z)=\int_{\mathbb{C}^n}g(w)K(z,w)\,d\mu_\Psi(w)\textup{ for } g\in L^2_\Psi,\,z\in \mathbb{C}^n.$$
Let $\Gamma=\textup{span}\{K_z:z\in \mathbb{C}^n\}$, then $\Gamma$ is dense in $F^2_\Psi$ \cite{WTH}. Consider the class of symbols
$$\mathcal{S}=\{f\textup{ measurable on }\mathbb{C}^n: fg\in L^2_\Psi \textup{ for } g\in \Gamma\}.$$
Given $f\in \mathcal{S}$, define the operator $H_f$ with domain $\Gamma$ as follows:
$$H_f(g)(z):=\int_{\mathbb{C}^n}K(z,w)g(w)[f(z)-f(w)]\,d\mu_\Psi(w).$$
This densely defined operator from $F^2_\Psi$ to $L^2_\Psi$ is called the Hankel operator $H_f$ with symbol $f$. Obviously, the symbol class $\mathcal{S}$ contains all essentially bounded measurable functions.

During the past few decades much effort has been devoted to the study of Hankel operators. See \cite{B,B2,HL,HW,BC,CO,HV,HV2,I,SY,TW,WCZ,WTH,XZ,Z} and the references therein in the setting of Fock spaces. See for instance \cite{AFJ,AFP,BCZ,PPR,PZZ,ZWH,FX,L,LL,LR,M,P,X,Zhu2,Zhu} in the context of Bergman spaces. In this paper, we focus on the problem of characterizing the boundedness of Hankel operators with locally integrable symbols acting on Fock-type spaces. More explicitly, we characterize the symbol function $f\in \mathcal{S}$ such that the Hankel operator $H_f:F^2_\Psi \to L^2_\Psi$ is bounded. We also study the compactness and  membership in the Schatten classes of Hankel operators on $F^2_\Psi$. As a byproduct, we give characterizations of the Schatten class membership of Toeplitz operators with positive measure symbols for the small exponent $0<p<1$.

We need to introduce some notations in order to state our results precisely. For every $z \neq 0$, let $P_z$ denote the orthogonal projection in $\mathbb{C}^n$ onto the complex line $\{\xi z:\xi\in \mathbb{C}\}$. We write $P_0$ for the identity map.
For any $0<r<\infty$ and $z\in\mathbb{C}^n$, the set $D(z,r)$ is defined by
	\begin{align*}
		D(z,r):=\left\{w:|z-P_z(w)|\leq r\Phi'(|z|^2)^{-\frac{1}{2}},|w-P_zw|\leq r\Psi'(|z|^2)^{-\frac{1}{2}}\right\}.
	\end{align*}
Let $L^2_{\mathrm{loc}}(\mathbb{C}^n)$ be the collection of square locally Lebesgue integrable functions on $\mathbb{C}^n$. For $f \in L^2_{\mathrm{loc}}(\mathbb{C}^n)$ and $z \in \mathbb{C}^n$, we define the function $G_r(f)(z)$ as follows:
	$$G_r(f)(z)=\inf \left\{\left(\frac{1}{|D(z,r)|}\int_{D(z,r)}|f-h|^2dV\right)^{1/2}: h \in H(D(z,r))\right\},$$
	where $H(D(z,r))$ is the set of all holomorphic functions on $D(z,r)$ and $|D(z,r)|$ is the Euclidean volume of $D(z,r)$. For any $0<r<\infty$, the space $\textup{BDA}_{\Psi,r}$ of bounded distance to analytic functions consists of all $f\in L^2_{\mathrm{loc}}(\mathbb{C}^n)$ such that
$$\|f\|_{\textup{BDA}_{\Psi,r}}=\sup_{z\in\mathbb{C}^n}G_r(f)(z)<+\infty.$$
The space $\textup{VDA}_{\Psi,r}$ consists of all $f$ in $ \textup{BDA}_{\Psi,r}$ such that $G_r(f)(z)\to 0$ as $z\to \infty.$

We now state our main results on boundedness and compactness of Hankel operators.
\begin{theorem}\label{1p1}
		 Suppose $f\in \mathcal{S}$. Then $H_f:F^2_\Psi \to L^2_\Psi$ is bounded if and only if $f \in \textup{BDA}_{\Psi,r}$ for some $0<r<\infty.$ Furthermore,
\begin{align}\label{1e4}
  \|H_f\|\simeq \|f\|_{\textup{BDA}_{\Psi,r}}.
\end{align}
\end{theorem}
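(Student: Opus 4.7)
The proof plan is to establish both directions by the standard two-step scheme: necessity by testing the operator on normalized reproducing kernels, and sufficiency by decomposing the symbol according to the structure of $\text{BDA}_{\Psi,r}$ and invoking H\"ormander's weighted $L^2$ estimate for $\bar\partial$. The norm comparison \eqref{1e4} will be extracted from the quantitative bounds at each step.

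For the necessity direction, I start from the identification $H_f g=(I-P)(fg)$ for $g\in\Gamma$, which expresses $H_f k_z$ as the element of minimum $L^2_\Psi$-norm in the coset $fk_z+F^2_\Psi$. Hence
\[
\|H_f\|^2\;\geq\;\|H_f k_z\|_\Psi^2\;=\;\inf_{h\in F^2_\Psi}\int_{\mathbb{C}^n}|fk_z-h|^2\,d\mu_\Psi.
\]
Restricting the integral to $D(z,r)$ and using the standard on-diagonal estimate $|k_z(w)|^2 e^{-\Psi(|w|^2)}\gtrsim |D(z,r)|^{-1}$ for $w\in D(z,r)$, together with the observation that $h/k_z$ is holomorphic on $D(z,r)$ (provided $r$ is small enough that $k_z$ is zero-free there), I obtain
\[
\|H_f\|^2\;\gtrsim\;\frac{1}{|D(z,r)|}\int_{D(z,r)}|f-h/k_z|^2\,dV\;\geq\;G_r(f)(z)^2.
\]
Taking the supremum in $z$ yields $\|f\|_{\text{BDA}_{\Psi,r}}\lesssim\|H_f\|$.

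For the sufficiency direction, I intend to use a decomposition theorem for $\text{BDA}_{\Psi,r}$ (alluded to in the abstract and standard in the Luecking tradition) producing a splitting $f=f_1+f_2$ with $f_1\in C^1(\mathbb{C}^n)$ satisfying the pointwise bound
\[
|\bar\partial f_1(w)|^2\;\lesssim\;\|f\|_{\text{BDA}_{\Psi,r}}^2\,\Phi'(|w|^2)\Psi'(|w|^2),
\]
tailored to the complex Hessian of $\Psi(|\cdot|^2)$, and $f_2$ satisfying the uniform local bound
\[
\sup_{z\in\mathbb{C}^n}\frac{1}{|D(z,r)|}\int_{D(z,r)}|f_2|^2\,dV\;\lesssim\;\|f\|_{\text{BDA}_{\Psi,r}}^2.
\]
For $H_{f_1}g=(I-P)(f_1g)$ with $g\in\Gamma$, I observe that $H_{f_1}g$ is the canonical $L^2_\Psi$-minimal solution to $\bar\partial u=g\,\bar\partial f_1$; then H\"ormander's weighted $L^2$ estimate, applicable because hypotheses \eqref{cdt1}--\eqref{cdt3} force $\Psi(|\cdot|^2)$ to be plurisubharmonic with complex Hessian comparable to $\Phi'(|\cdot|^2)\Psi'(|\cdot|^2)$ on the radial/tangential splitting, produces $\|H_{f_1}g\|_\Psi\lesssim\|f\|_{\text{BDA}_{\Psi,r}}\|g\|_\Psi$. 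For $H_{f_2}$, I would write the integral kernel out explicitly and bound it via a Schur-type test using sharp off-diagonal estimates for $K(z,w)$ against the sets $D(\cdot,r)$, exploiting the uniform local-$L^2$ control on $f_2$ to convert a localized kernel inequality into a globally bounded operator.

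The main obstacle lies in the sufficiency direction, and within it in the anisotropic geometry of the disks $D(z,r)$, whose two scales $\Phi'(|z|^2)^{-1/2}$ and $\Psi'(|z|^2)^{-1/2}$ record that $K(z,\cdot)$ decays on different rates in the $P_z$-direction and in its orthogonal complement. The delicate task is constructing $f=f_1+f_2$ so that $|\bar\partial f_1|$ scales correctly with \emph{both} radii simultaneously and so that the weight $\Psi(|\cdot|^2)$ supplies enough plurisubharmonicity to run H\"ormander's estimate; the smoothness hypotheses \eqref{cdt2}--\eqref{cdt3} on $\Phi''$ and $\Psi''$ are exactly what secure this compatibility. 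Once the kernel asymptotics and the BDA-decomposition are in hand, the subsequent analysis of $H_{f_2}$ and the assembly of the final norm equivalence are comparatively routine.
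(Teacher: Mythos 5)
Your necessity argument is the same as the paper's (test on $k_z$, use $|k_z(w)|^2e^{-\Psi(|w|^2)}\simeq|D(z,r)|^{-1}$ and the zero-freeness of $k_z$ on $D(z,r)$), and your overall scheme for sufficiency (decompose $f=f_1+f_2$, treat $f_1$ by H\"ormander and $f_2$ by a localized estimate) is also the paper's. But there is a genuine gap in the $f_1$ part: the pointwise bound you ask of the decomposition, $|\bar\partial f_1(w)|^2\lesssim\|f\|_{\textup{BDA}_{\Psi,r}}^2\,\Phi'(|w|^2)\Psi'(|w|^2)$, is not the right estimate and is too weak to run H\"ormander. The complex Hessian of $\varphi(z)=\Psi(|z|^2)$ is not ``comparable to $\Phi'\Psi'$''; it equals $\Phi'(|z|^2)$ on the complex line $\mathbb{C}z$ and $\Psi'(|z|^2)$ on its orthogonal complement, so the weighted estimate controls $\|S(g\bar\partial f_1)\|_\Psi^2$ by $\int\bigl(|P_z(g\bar\partial f_1)|^2/\Phi'+|(I-P_z)(g\bar\partial f_1)|^2/\Psi'\bigr)e^{-\Psi}\,dV$. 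To get $\|H_{f_1}g\|_\Psi\lesssim\|f\|_{\textup{BDA}_{\Psi,r}}\|g\|_\Psi$ one therefore needs the two \emph{componentwise} bounds $|\bar\partial f_1|/[\Phi']^{1/2}\lesssim\|f\|_{\textup{BDA}_{\Psi,r}}$ and $|\bar\partial f_1\wedge\bar\partial|z||/[\Psi']^{1/2}\lesssim\|f\|_{\textup{BDA}_{\Psi,r}}$ (the paper's Lemma \ref{3p7}(A)); your product bound only yields $|g\bar\partial f_1|^2_{i\partial\bar\partial\varphi}\lesssim\|f\|^2|g|^2\min\{\Phi',\Psi'\}\cdot$ (up to constants), which is unbounded for genuinely growing weights such as $\Psi(x)=x^2$, so the boundedness of $H_{f_1}$ does not follow. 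Producing the tangential bound of size $[\Psi']^{1/2}$ is precisely the nontrivial new ingredient: it forces a partition of unity built from cut-offs $\gamma_z$ adapted to the anisotropic boxes $D(z,r)$ whose full gradient is $O([\Phi']^{1/2}/r)$ but whose tangential part $\bar\partial\gamma_z\wedge\bar\partial|w|$ is $O([\Psi']^{1/2}/r)$ (the paper's Lemma \ref{3p2}), and this is exactly what your sketch omits.

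A second, more repairable, issue is the treatment of $H_{f_2}$: you propose a Schur-type test using ``sharp off-diagonal estimates for $K(z,w)$,'' but no pointwise off-diagonal decay of $K(z,w)$ in the Bergman distance is known for $F^2_\Psi$ — the paper states this explicitly as one of the main obstructions and develops integral estimates instead. The standard and sufficient route is the one the paper takes: $\sup_z M_r(f_2)(z)\lesssim\|f\|_{\textup{BDA}_{\Psi,r}}$ makes $|f_2|^2\,dV$ a $(2,2)$ Fock--Carleson measure, so $\|f_2g\|_\Psi\lesssim\|f\|_{\textup{BDA}_{\Psi,r}}\|g\|_\Psi$ and hence $\|H_{f_2}g\|_\Psi=\|(I-P)(f_2g)\|_\Psi\lesssim\|f\|_{\textup{BDA}_{\Psi,r}}\|g\|_\Psi$, with no kernel decay needed.
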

\begin{theorem}\label{1p2}
		 Suppose $f\in \mathcal{S}$. Then $H_f:F^2_\Psi \to L^2_\Psi$ is compact if and only if $f \in \textup{VDA}_{\Psi,r}$ for some $0<r<\infty.$
\end{theorem}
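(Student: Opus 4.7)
The plan is to derive Theorem~\ref{1p2} from the boundedness characterization in Theorem~\ref{1p1} by standard approximation and normal-families arguments.

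For the necessity direction, I would use that the normalized reproducing kernels $k_z$ converge weakly to zero in $F^2_\Psi$ as $|z|\to\infty$. If $H_f$ is compact, this immediately forces $\|H_f k_z\|_\Psi\to 0$. The crux is a pointwise localized version of the lower BDA estimate from Theorem~\ref{1p1}, namely
$$G_r(f)(z)\;\lesssim\;\|H_f k_z\|_\Psi + o(1) \quad \text{as } |z|\to\infty.$$
To prove it I would follow the same template used for $\|f\|_{\textup{BDA}_{\Psi,r}}\lesssim\|H_f\|$ in Theorem~\ref{1p1}: test $H_f$ against $k_z$, restrict the integration to the polydisk $D(z,r)$, and use the near-diagonal size and off-diagonal decay of $K(z,w)$ to recognize the infimum defining $G_r(f)$. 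Combined with the forced decay of $\|H_f k_z\|_\Psi$, this yields $f\in \textup{VDA}_{\Psi,r}$.

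For sufficiency, assume $f\in\textup{VDA}_{\Psi,r}$. Using the BDA decomposition underlying Theorem~\ref{1p1} (itself a product of H\"ormander's $\overline\partial$-estimates), write $f=f_1+f_2$ with $f_1$ entire and $f_2\in L^2_{\textup{loc}}(\mathbb{C}^n)$ satisfying an averaged bound $\bigl(|D(z,r)|^{-1}\int_{D(z,r)}|f_2|^2\,dV\bigr)^{1/2}\lesssim G_r(f)(z)$. Since $H_{f_1}\equiv 0$, we have $H_f=H_{f_2}$. For $R>0$, let $f_2^{(R)}:=\chi_{B(0,R)}f_2$, where $B(0,R)$ is the Euclidean ball. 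Then
$$\bigl\|f_2 - f_2^{(R)}\bigr\|_{\textup{BDA}_{\Psi,r}}\longrightarrow 0 \quad\text{as }R\to\infty,$$
since for $|z|$ large enough the polydisk $D(z,r)$ is disjoint from $B(0,R)$ and the remainder is controlled by $G_r(f)(z)\to 0$, while for $|z|$ bounded dominated convergence closes the estimate. Theorem~\ref{1p1} then gives $\|H_{f_2}-H_{f_2^{(R)}}\|\to 0$. To see that $H_{f_2^{(R)}}$ itself is compact, observe that $f_2^{(R)}\in L^2$ has compact support, so the multiplication operator $M_{f_2^{(R)}}:F^2_\Psi\to L^2_\Psi$ is compact by a normal-families argument: a norm-bounded sequence in $F^2_\Psi$ is locally uniformly bounded, hence has a subsequence converging locally uniformly, and dominated convergence in $L^2(B(0,R))$ finishes it. Since the projection $P$ is bounded on $L^2_\Psi$, $H_{f_2^{(R)}}=(I-P)M_{f_2^{(R)}}|_\Gamma$ extends to a compact operator, and $H_f$ inherits compactness as a norm limit.

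The main technical obstacle is the pointwise lower bound $G_r(f)(z)\lesssim\|H_f k_z\|_\Psi+o(1)$ in the necessity step. The argument parallels the supremum-level version that proves $\|f\|_{\textup{BDA}_{\Psi,r}}\lesssim\|H_f\|$ in Theorem~\ref{1p1}, but the anisotropic geometry of $D(z,r)$, which scales like $\Phi'(|z|^2)^{-1/2}$ along the complex line $\mathbb{C}z$ and like $\Psi'(|z|^2)^{-1/2}$ transversally, forces delicate off-diagonal kernel estimates; this is where the growth and smoothness conditions \eqref{cdt1}--\eqref{cdt3} on $\Psi$ enter. Once that localization is secured, the rest of the argument is routine.
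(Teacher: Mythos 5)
Your necessity argument is fine and is essentially the paper's: the chain of inequalities in the proof of Theorem \ref{1p1} is already pointwise in $z$, giving $G_r(f)(z)\leq \|H_fk_z\|_\Psi$ for all $0<r\leq r_0$ with no $o(1)$ error, so combined with the weak null convergence of $k_z$ (Lemma 12 of \cite{WTH}) you get $f\in\textup{VDA}_{\Psi,r}$ directly; the ``main technical obstacle'' you flag is not an obstacle at all, since no new kernel estimate beyond what Theorem \ref{1p1} already supplies is needed.

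The sufficiency direction, however, has a genuine gap. You assert that the decomposition behind Theorem \ref{1p1} writes $f=f_1+f_2$ with $f_1$ \emph{entire}, so that $H_{f_1}\equiv 0$ and $H_f=H_{f_2}$. That is not what the decomposition (Lemma \ref{3p7}) provides: $f_1=\sum_k h_k\eta_k$ is only a $C^2$ function, glued from local holomorphic approximants by a partition of unity, and it is not holomorphic; what is controlled is $|\overline{\partial}f_1|/[\Phi']^{1/2}+|\overline{\partial}f_1\land\overline{\partial}|z||/[\Psi']^{1/2}\lesssim G_{16t^4r}(f)$. No construction produces an entire $f_1$ in general (if it did, the entire H\"ormander machinery in the boundedness proof would be superfluous, since $H_f$ would always reduce to a Carleson-measure multiplier), so $H_{f_1}$ cannot be discarded. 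For $f\in\textup{VDA}_{\Psi,r}$ the correct statement is that these $\overline{\partial}f_1$ quantities tend to $0$ at infinity, and one must prove compactness of $H_{f_1}$ separately: the paper does this by taking a bounded sequence $g_j$ in $F^2_\Psi$ tending to $0$ uniformly on compacta, bounding $\|H_{f_1}g_j\|_\Psi$ via the H\"ormander $L^2$ estimate of Lemma \ref{3p8} (using $\|H_{f_1}g\|_\Psi\leq\|S(g\overline{\partial}f_1)\|_\Psi$), and splitting the integral into $|z|\leq M_2$ and $|z|\geq M_2$. Your treatment of the $f_2$ part (cutting off by $\chi_{B(0,R)}$, using that $D(z,r)$ misses $B(0,R)$ unless $|z|\geq R-C$ so the BDA norm of the tail is small, then compactness of $M_{f_2^{(R)}}$ by normal families) is a workable alternative to the paper's vanishing Fock--Carleson measure argument, but it only handles $f_2$; without a compactness argument for $H_{f_1}$ the proof is incomplete. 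A related smaller slip: the decomposition is not ``a product of H\"ormander's estimates''---H\"ormander enters only when estimating $H_{f_1}$, not in constructing $f_1,f_2$.
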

The notion of bounded distance to analytic functions BDA was introduced by Luecking \cite{L} in the context of the Bergman space. Hu and Virtanen \cite{HV} further generalized it to the notion of integral distance to analytic functions $\text{IDA}$. Note that $\text{BDA}$ in \cite{L} and $\text{IDA}$ in \cite{HV} are essentially defined by Bergman metric balls. In our case, however, it is more convenient to use the balls $D(z,r)$ to give the definition of $\text{BDA}_{\Psi,r}$. Indeed, all of them are equivalent to Bergman metric balls. We see that, if $f\in \text{BDA}_{\Psi,r_0}$ for some $r_0>0$, then $f\in \text{BDA}_{\Psi,r}$ for all $0<r\leq r_0.$ It is interesting to ask whether the space $\text{BDA}_{\Psi,r}$ is independent of the choice of $r$. We refer to \cite{HV} for this problem. We do not address it in our present work.

In the case when $f$ is holomorphic, Seip and Youssfi \cite{SY} characterized the boundedness and compactness of the Hankel operator $H_{\overline{f}}:F^2_\Psi \to L^2_\Psi$ in terms of the mean oscillation of the symbol function $f$. More specifically, for any function $f$ and $z\in \mathbb{C}^n$, the mean oscillation of $f$ at $z$ is defined by
$$(\textup{MO }f)(z):=\sqrt{\widetilde{|f|^2}(z)-|\widetilde{f}(z)|^2},$$
where $\widetilde{f}(z):=\langle fk_z,k_z\rangle$ is the Berezin transform of $f$. The space $\textup{BMO}$ is the set of all functions $f$ on $\mathbb{C}^n$ such that $\widetilde{|f|^2}(z)$ is finite for every $z\in \mathbb{C}^n$ and
$\textup{MO }f$ is bounded on $\mathbb{C}^n.$ The space $\textup{VMO}$ consists of all $f\in \textup{BMO}$ with the property $(\textup{MO }f)(z)\to 0$ as $z\to \infty.$
It was shown in \cite{SY} that, for $f$ holomorphic, $H_{\overline{f}}$ is bounded if and only if $f\in \textup{BMO}$; $H_{\overline{f}}$ is compact if and only if $f\in \textup{VMO}.$

Wang et al. \cite{WCZ} and Tu \cite{TW} considered more general symbol functions that are not necessary to be anti-holomorphic and extended some results of \cite{SY} by using the tool of local mean oscillation. Given $f\in L^2_{\mathrm{loc}}(\mathbb{C}^n,dV)$, the local mean oscillation of $f$ at $z$ is defined by
$$MO_r(f)(z)=\left\{\frac{1}{|D(z,r)|}\int_{D(z,r)}|f(w)-\widehat{f}_r(z)|^2\,dV(w)\right\}^{1/2},$$
where $r$ is a positive number and
$$\widehat{f}_r(z)=\frac{1}{|D(z,r)|}\int_{D(z,r)}f(w)\,dV(w).$$
We say that $f$ in $\textup{BMO}_r$ if $MO_r(f)$ is bounded; $f$ in $\textup{VMO}_r$ if $MO_r(f)\to 0$ as $z\to \infty$. It was proved in \cite{WCZ} that

 (A)$H_f$ and $H_{\overline{f}}$ are both bounded$\Leftrightarrow f\in\text{BMO}\Leftrightarrow f \in \textup{BMO}_r$ for all $0<r<\infty$;

 (B)$H_f$ and $H_{\overline{f}}$ are both compact$\Leftrightarrow f\in\text{VMO}\Leftrightarrow f \in \textup{VMO}_r$ for all $0<r<\infty$.

We will show that $f$ in $\text{BMO}$ if and only if $f,\overline{f}\in\textup{BDA}_{\Psi,r}$(see Lemma \ref{5p5}). Therefore, our results cover those in \cite{SY,TW,WCZ}. It should be pointed out that Bommier-Hato and Constantin \cite{BC} considered Hankel operators with anti-holomorphic symbols on vector-valued Fock spaces. 

We also study the Schatten class membership of Toeplitz operators and Hankel operators. Given two separable Hilbert spaces $H_1$, $H_2$ and $0<p<\infty$, we say that a compact linear operator $A:H_1\to H_2$ belongs to the Schatten class $S_p:=S_p(H_1,H_2)$ if the sequence of eigenvalues $\{s_k\}_{k=1}^\infty$ of $|A|:=(A^*A)^{\frac{1}{2}}$ satisfies $\|A\|_{S_p}^p:=\sum_{k=1}^{\infty}s_k^p<\infty.$
This defines a norm when $1\leq p<\infty$ and a quasi-norm otherwise.
Given a positive Borel measure $\mu$ on $\mathbb{C}^n$, we define the Toeplitz operator $T_\mu$ with the symbol $\mu$ as
$$T_\mu g(z):=\int_{\mathbb{C}^n}K(z,w)g(w)e^{-\Psi(|w|^2)}\,d\mu(w)\textup{ for }g\in F^2_\Psi,z\in\mathbb{C}^n.$$
In particular, when $d\mu(z)=f(z)\,dV(z)$ and $f$ is a positive measurable function, the induced Toeplitz operator is denoted by $T_f.$
For a positive Borel measure $\mu$ on $\mathbb{C}^n$, we define the Berezin transform of $\mu$ as
$$\widetilde{\mu}(z):=\int_{\mathbb{C}^n}|k_z(w)|e^{-\Psi(|w|^2)}\,d\mu(w)\textup{ for } z\in\mathbb{C}^n.$$
For all $0<r<\infty$ and $z\in \mathbb{C}^n$, the integral mean of $\mu$ over $D(z,r)$ is given by $\widehat{\mu}_r(z):=\mu(D(z,r))/|D(z,r)|.$

Set $d\nu_\Psi(z)=K(z,z)\,d\mu_\Psi(z).$ The following theorem gives characterizations of the membership in the Schatten classes of Toeplitz operators with positive measure symbols.
	\begin{theorem}\label{4p5}
		Let $\mu$ be a positive Borel measure on $\mathbb{C}^n$ and $0< p <\infty$. The following conditions are equivalent.
		\textup{(A)} The Toeplitz operator $T_\mu$ is in $S_p$.\\
		\textup{(B)} The function $\widetilde{\mu}(z)$ is in $L^p(\mathbb{C}^n,d\nu_\Psi)$.\\
		\textup{(C)} The function $\widehat{\mu}_r(z)$ is in $L^p(\mathbb{C}^n,d\nu_\Psi)$ for any $0<r<\infty$.\\
		\textup{(D)} The sequence $\{\widehat{\mu}_r(z_k)\}_{k=1}^\infty$ is in $l^p$ for any $\Psi$-lattice $\left\{z_{k}\right\}_{k=1}^\infty$.	Furthermore,
\begin{align}\label{t4}
  \|T_\mu\|_{S_p}\simeq \|\widetilde{\mu}\|_{L^p(\mathbb{C}^n,d\nu_\Psi)} \simeq \|\widehat{\mu}_r\|_{L^p(\mathbb{C}^n,d\nu_\Psi)}\simeq \|\{\widehat{\mu}_r(z_k)\}_{k=1}^\infty\|_{l^p}.
\end{align}
	\end{theorem}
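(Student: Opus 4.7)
My plan is to prove the cycle (C) $\Leftrightarrow$ (D), (B) $\Leftrightarrow$ (C), and (A) $\Leftrightarrow$ (C), with the last link split by the range of $p$. A preliminary Fubini computation with the reproducing property yields $\langle T_\mu f,g\rangle_\Psi=\int f\overline{g}\,e^{-\Psi(|w|^2)}\,d\mu(w)$, so $T_\mu\geq 0$ and $\|T_\mu\|_{S_p}^p=\operatorname{tr}(T_\mu^p)$ whenever finite. Positivity is used throughout to reduce Schatten norms to diagonals and to legitimate testing against normalized reproducing kernels.

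\textbf{The geometric equivalences.} I would handle (C) $\Leftrightarrow$ (D) by lattice discretization: a $\Psi$-lattice $\{z_k\}$ gives a bounded-overlap covering by sets $D(z_k,r)$, on each of which $\widehat\mu_r$ and $d\nu_\Psi$ are quasi-constant, together with the normalization $|D(z_k,r)|K(z_k,z_k)e^{-\Psi(|z_k|^2)}\simeq 1$ from the reproducing-kernel asymptotics already established in the paper. For (B) $\Leftrightarrow$ (C), the pointwise bound $\widehat\mu_r\lesssim\widetilde\mu$ follows from lower estimates of the kernel on $D(z,r)$, while the reverse bound comes from off-diagonal decay of $K$ summed over dyadic annuli adapted to the two scales $\Phi'(|z|^2)^{-1/2}$ and $\Psi'(|z|^2)^{-1/2}$ defining $D(z,r)$ and controlled by \eqref{cdt1}--\eqref{cdt3}; integration against $d\nu_\Psi$ transfers both inequalities to $L^p$.

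\textbf{(A) $\Leftrightarrow$ (C).} When $p\geq 1$ the argument is classical: at $p=1$ a direct trace computation gives $\|T_\mu\|_{S_1}=\int K(w,w)e^{-\Psi(|w|^2)}\,d\mu(w)\simeq\|\widehat\mu_r\|_{L^1(d\nu_\Psi)}$; at the $p=\infty$ endpoint boundedness of $T_\mu$ is equivalent to the Carleson condition $\widehat\mu_r\in L^\infty$; complex interpolation for the positive operator $T_\mu$ fills in $1<p<\infty$. The substantive case is $0<p<1$. For (C) $\Rightarrow$ (A), write $\mu=\sum_k\mu_k$ with $\mu_k=\mu|_{V_k}$ subordinate to the $\Psi$-lattice, apply the $p$-triangle inequality $\|T_\mu\|_{S_p}^p\leq\sum_k\|T_{\mu_k}\|_{S_p}^p$, and observe that each $T_{\mu_k}$ is essentially rank-one on $\text{span}(k_{z_k})$ with $\|T_{\mu_k}\|_{S_p}\simeq\mu(V_k)K(z_k,z_k)e^{-\Psi(|z_k|^2)}$; summing recovers $\|\widehat\mu_r\|_{L^p(d\nu_\Psi)}^p$ via (D). For (A) $\Rightarrow$ (D), I would adapt Luecking's technique: using that $\{k_{z_k}/\|k_{z_k}\|\}$ is a Bessel sequence (Schur test on the Gram matrix of normalized kernels via off-diagonal kernel decay), split the lattice into finitely many well-separated sub-lattices along which the corresponding rank-one projections are almost orthogonal, and apply the $S_p$ quasi-triangle inequality to the resulting block decomposition of $T_\mu$ to obtain $\sum_k\widehat\mu_r(z_k)^p\lesssim\|T_\mu\|_{S_p}^p$.

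\textbf{Main obstacle.} The decisive difficulty is the implication (A) $\Rightarrow$ (D) for $0<p<1$: the trace/interpolation machinery used when $p\geq 1$ collapses because $S_p$ is only a quasi-Banach space. Luecking's substitute — block-diagonal decomposition combined with the $p$-triangle inequality — demands quantitative almost-orthogonality of normalized kernels at lattice points, so the principal technical input is a Schur-type bound on the Gram matrix $\bigl(\langle k_{z_j},k_{z_k}\rangle/(\|k_{z_j}\|\|k_{z_k}\|)\bigr)_{j,k}$ derived from off-diagonal kernel decay under conditions \eqref{cdt1}--\eqref{cdt3}. Once this separation argument is in place, the other implications follow from standard reproducing-kernel machinery.
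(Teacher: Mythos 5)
Your overall architecture (Luecking-type separation for the necessity when $0<p<1$, discretization for the geometric equivalences) is the same in spirit as the paper's, but your two key steps rest on inputs that are not available or not justified in this setting. First, both your (A)$\Rightarrow$(D) argument and your reverse bound in (B)$\Leftrightarrow$(C) invoke pointwise off-diagonal decay of $K(z,w)$ (a Schur-type bound on the Gram matrix of normalized kernels "derived from off-diagonal kernel decay under \eqref{cdt1}--\eqref{cdt3}"). The paper stresses in the introduction that no such pointwise decay estimate in the Bergman distance is known for $F^2_\Psi$; this is precisely why Section \ref{s4} exists. The almost-orthogonality you need is obtained there by integral estimates: Lemma \ref{qt5}(B) gives uniform smallness of $\int_{D(z,R)^c}|K(z,w)|^pe^{-\frac{p}{2}\Psi(|w|^2)}[\cdots]^k\,dV(w)$ as $R\to\infty$, and Lemma \ref{qt8} converts this into $\sum_{j\neq k}\sup_{w\in B(z_k,r)}|k_{z_j}(w)|^pe^{-\frac p2\Psi(|w|^2)}=o(1)|B(z_k,r)|^{-p/2}$, which is what controls the off-diagonal block $T_2$ of $J^*T_\nu J$ in the paper's proof. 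As written, your "principal technical input" is exactly the missing ingredient, so the separation argument does not close.

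Second, your sufficiency step (C)$\Rightarrow$(A) for $0<p<1$ via $\mu=\sum_k\mu_k$ and the claim that each $T_{\mu_k}$ is "essentially rank-one" with $\|T_{\mu_k}\|_{S_p}\simeq\mu(V_k)K(z_k,z_k)e^{-\Psi(|z_k|^2)}$ is not justified: $T_{\mu_k}$ is in general of infinite rank, and for $0<p<1$ the trace only bounds the $S_p$ quasi-norm from below ($\|T\|_{S_1}\leq\|T\|_{S_p}$), so the displayed upper bound does not follow from the trace computation. The paper avoids this entirely by proving (B)$\Rightarrow$(A) through the Berezin-transform inequality for positive operators (Lemma \ref{qt9}(A)): $\|T_\mu\|_{S_p}^p=\mathrm{tr}(T_\mu^p)=\int_{\mathbb{C}^n}\langle T_\mu^pk_z,k_z\rangle\,d\nu_\Psi(z)\leq\int_{\mathbb{C}^n}\widetilde{\mu}(z)^p\,d\nu_\Psi(z)$, using $\langle T^pk_z,k_z\rangle\leq\langle Tk_z,k_z\rangle^p$ for $0<p\leq1$, and then passes between (B), (C), (D) by Lemma \ref{4p4}. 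If you want to keep a decomposition-style sufficiency proof, you would have to either dominate each $T_{\mu_k}$ by a genuinely finite-rank operator or switch to this Berezin-transform route; and for the necessity you must replace the Gram-matrix/Schur input by the integral estimates of Lemmas \ref{qt5} and \ref{qt8}.
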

The equivalence between (A) and (D) was proved in \cite{SY} for $1\leq p<\infty$. Our contribution is on the small exponent $0<p<1$. The characterizations of Schatten class Hankel operators are the following.
		\begin{theorem}\label{1p3}
		Let $0<p<\infty$. Suppose $f\in \mathcal{S}$. Then the following statements are equivalent.\\
		\textup{(A)} The Hankel operator $H_f$ is in $S_{p}$.\\
		\textup{(B)} $G_r(f)\in L^p(\mathbb{C}^n,d\nu_\Psi)$ for some $0<r<\infty.$\\
        \textup{(C)} $\int_{\mathbb{C}^n}\|H_fk_z\|_\Psi^p\,d\nu_\Psi<\infty$. Furthermore,
        \begin{align}\label{1e5}
          \|H_f\|_{S_p}\simeq \|G_r(f)\|_{L^p(\mathbb{C}^n,d\nu_\Psi)}\simeq
          \left\{\int_{\mathbb{C}^n}\|H_fk_z\|_\Psi^p\,d\nu_\Psi(z)\right\}^{\frac{1}{p}}.
        \end{align}
	\end{theorem}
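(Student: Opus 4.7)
The plan is to close the loop (C) $\Rightarrow$ (B) $\Rightarrow$ (A) $\Rightarrow$ (C), adapting the Luecking--Hu--Virtanen strategy to the present Fock-type setting, with Theorem \ref{4p5} as the main external input. For (C) $\Rightarrow$ (B), I would start from the identity $H_f k_z = (f - h_z) k_z$, where $h_z := P(f k_z)/k_z$ is holomorphic on a neighborhood of $D(z,r)$ because the standard pointwise kernel estimates force $k_z$ to be non-vanishing there. Combined with the lower bound $|k_z(w)|^2 e^{-\Psi(|w|^2)} \gtrsim |D(z,r)|^{-1}$ for $w \in D(z,r)$, the definition of $G_r$ immediately yields $G_r(f)(z) \lesssim \|H_f k_z\|_\Psi$. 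Raising to the $p$th power and integrating against $d\nu_\Psi$ gives (B) together with the corresponding direction of \eqref{1e5}.

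For (B) $\Rightarrow$ (A), the main technical step, I would invoke a $\textup{BDA}_{\Psi,r}$-decomposition lemma (the analogue in our setting of Luecking's scalar decomposition): each $f \in \textup{BDA}_{\Psi,r}$ splits as $f = f_1 + f_2$ with $f_1 \in C^\infty$, $|f_2(z)|$ majorized by a local $L^2$-average of $G_r(f)$ around $z$, and $|\overline{\partial} f_1(z)|/\Phi'(|z|^2)^{1/2}$ pointwise controlled by a smoothed $G_r(f)$. Split $H_f = H_{f_1} + H_{f_2}$. For the rough part, $\|H_{f_2} g\|_\Psi \leq \|f_2 g\|_\Psi$ yields $H_{f_2}^* H_{f_2} \leq T_{|f_2|^2 dV}$, and Theorem \ref{4p5} applied at exponent $p/2$ (including the case $0<p/2<1$ when $p<2$) places $T_{|f_2|^2 dV}$ in $S_{p/2}$, hence $H_{f_2} \in S_p$. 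For the smooth part, I would solve $\overline{\partial} u = g\, \overline{\partial} f_1$ via H\"ormander's weighted $L^2$-estimate in an auxiliary weight $e^{-\Psi(|z|^2)-\varphi(z)}$ chosen so that $i \partial \overline{\partial}(\Psi+\varphi) \gtrsim \Phi'$; the resulting $u$ satisfies $\|u\|_\Psi \lesssim \|g\, \overline{\partial} f_1/\sqrt{\Phi'}\|_\Psi$. Since $H_{f_1} g = (I-P)u$, the operator $H_{f_1}$ factors through multiplication by $\overline{\partial} f_1/\sqrt{\Phi'}$, whose modulus is majorized by a smoothed $G_r(f)$, and a second application of Theorem \ref{4p5} gives $H_{f_1} \in S_p$. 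Summing, $\|H_f\|_{S_p} \lesssim \|G_r(f)\|_{L^p(d\nu_\Psi)}$.

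For (A) $\Rightarrow$ (C), using $\|H_f\|_{S_p}^p = \|H_f^* H_f\|_{S_{p/2}}^{p/2}$ together with $\|H_f k_z\|_\Psi^2 = \langle H_f^* H_f k_z, k_z\rangle$, matters reduce to showing that for any positive operator $A$ on $F_\Psi^2$ and $0 < q < \infty$,
$$\int_{\mathbb{C}^n} \langle A k_z, k_z\rangle^q\, d\nu_\Psi(z) \lesssim \|A\|_{S_q}^q.$$
For $q \geq 1$ this follows from Jensen's inequality applied to the spectral decomposition combined with the reproducing identity $\int |\langle k_z, e_j\rangle|^2\, d\nu_\Psi(z) = 1$. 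For $0<q<1$ the argument is more delicate and proceeds via a Luecking-style atomic decomposition of positive operators in $S_q$, combined with the $q$-subadditivity of the $L^q$-quasinorm.

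The main obstacle will be the (B) $\Rightarrow$ (A) step in the small-exponent regime $0<p<2$. Two ingredients are essential: a sharp $\textup{BDA}_{\Psi,r}$-decomposition delivering the required weighted pointwise control on $\overline{\partial} f_1$, and H\"ormander's $L^2$-estimate for the $\overline{\partial}$ equation in an auxiliary weight tailored to the logarithmic-growth conditions \eqref{cdt1}--\eqref{cdt3}. Crucially, the argument also hinges on the $0<p<1$ case of Theorem \ref{4p5}, itself the new ingredient highlighted in the introduction, which is applied here at exponent $p/2$.
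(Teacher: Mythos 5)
Your outer skeleton ((C)$\Rightarrow$(B) via $G_r(f)(z)\lesssim\|H_fk_z\|_\Psi$, and the rough part of (B)$\Rightarrow$(A) via $H_{f_2}^*H_{f_2}\le T_{|f_2|^2}$ plus Theorem \ref{4p5} at exponent $p/2$) matches the paper. But two steps contain genuine gaps.

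First, the smooth part of (B)$\Rightarrow$(A). Your decomposition only records control of $|\overline{\partial}f_1|/\Phi'(|z|^2)^{1/2}$, and you propose to compensate by running H\"ormander's estimate in an auxiliary weight $e^{-\Psi(|z|^2)-\varphi}$ with $i\partial\overline{\partial}(\Psi+\varphi)\gtrsim\Phi'$ while still concluding $\|u\|_\Psi\lesssim\|g\,\overline{\partial}f_1/\sqrt{\Phi'}\|_\Psi$. This cannot work in the weight class considered here: $i\partial\overline{\partial}\Psi(|z|^2)$ has eigenvalue $\Phi'(|z|^2)$ only in the complex radial direction and $\Psi'(|z|^2)$ in the tangential directions, and for admissible weights (e.g.\ $\Psi(x)=e^x$) one has $\Phi'/\Psi'\to\infty$. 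A bounded perturbation $\varphi$ cannot raise the tangential Hessian from $\Psi'$ to $\Phi'$ (integrating $\Delta\varphi$ along complex lines bounds it by $\sup|\varphi|$), and an unbounded $\varphi$ changes the norm, so the estimate lands in the wrong space. Moreover, the crude bound $|u|^2_{i\partial\overline{\partial}\Psi}\le|u|^2/\Psi'$ would require controlling $|\overline{\partial}f_1|/\sqrt{\Psi'}$, which is \emph{not} dominated by $G_r(f)$: the partition functions have gradients of size $\sqrt{\Phi'}$ in the radial direction. The paper resolves exactly this point by the anisotropic estimate of Lemma \ref{3p8} (computing $|u|^2_{i\partial\overline{\partial}\Psi}$ via Lemma \ref{n3p9} and identifying the tangential part with $|u\wedge\overline{\partial}|z||$), combined with the decomposition Lemma \ref{3p7}, which controls \emph{both} $|\overline{\partial}f_1|/\sqrt{\Phi'}$ and the wedge term $|\overline{\partial}f_1\wedge\overline{\partial}|z||/\sqrt{\Psi'}$ by $G_{16t^4r}(f)$. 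Without the wedge component your multiplier bound, and hence the Schatten estimate for $H_{f_1}$, does not follow.

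Second, your (A)$\Rightarrow$(C) step for $0<p<2$ reduces to the claim that $\int_{\mathbb{C}^n}\langle Ak_z,k_z\rangle^q\,d\nu_\Psi\lesssim\|A\|_{S_q}^q$ for every positive operator $A$ and all $0<q<\infty$, applied with $q=p/2$. For $q\ge1$ this is fine (it is how the paper handles $p\ge2$), but for $0<q<1$ the statement is false for general positive operators: taking $A=\sum_k\lambda_k\langle\cdot,u_k\rangle u_k$ with unit vectors $u_k$ spread over $N_k$ widely separated lattice points gives $\int\widetilde A^{\,q}\,d\nu_\Psi\gtrsim\sum_k\lambda_k^qN_k^{1-q}$, which can diverge while $\sum_k\lambda_k^q<\infty$. (This is precisely why small-exponent Schatten characterizations require the special structure of the operator, as in Theorem \ref{4p5}.) So no ``Luecking-style atomic decomposition of positive operators'' can rescue the reduction as stated, and with it your whole necessity direction (both (A)$\Rightarrow$(C) and, through the loop, (A)$\Rightarrow$(B)) is unproven for $0<p<2$ --- which is the main new case. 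The paper instead proves (A)$\Rightarrow$(B) directly for $0<p<2$ by a Luecking-type argument on an $R$-separated lattice: it forms $BH_fJ$, splits it into diagonal and off-diagonal parts, bounds the diagonal below by $\sum_kG_r(f)(z_k)^p$, and absorbs the off-diagonal part using the kernel integral estimates of Section \ref{s4} (Lemmas \ref{qt5} and \ref{qt8}), which make it $o(1)\sum_kG_r(f)(z_k)^p$ as $R\to\infty$; the implication (B)$\Rightarrow$(C) is then proved separately via the decomposition and Lemma \ref{qt11}. You would need these (or equivalent) ingredients to close the argument for small $p$.
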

As a consequence of the last theorem, the following corollary extends the result in \cite{SY}. Its proof is similar to Lemma 4.1 of \cite{ZWH}.
		\begin{corollary}\label{1p4}
		Let $0< p<\infty$. Suppose $f\in \mathcal{S}$. Then the Hankel operators $H_f$ and $H_{\overline{f}}$ are both in $S_{p}$ if and only if $MO_r(f)\in L^p(\mathbb{C}^n,d\nu_\Psi)$ for some $0<r<\infty.$ Furthermore,
        \begin{align}\label{1e6}
          \|H_f\|_{S_p}+\|H_{\overline{f}}\|_{S_p}\simeq \|MO_r(f)\|_{L^p(\mathbb{C}^n,d\nu_\Psi)}.
        \end{align}
		\end{corollary}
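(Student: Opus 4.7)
The plan is to reduce Corollary~\ref{1p4} to Theorem~\ref{1p3} via the pointwise/local equivalence
$$G_r(f)(z)+G_r(\overline{f})(z)\simeq MO_r(f)(z),$$
integrated against $d\nu_\Psi$. Applying Theorem~\ref{1p3} to both $f$ and $\overline{f}$ gives $\|H_f\|_{S_p}+\|H_{\overline{f}}\|_{S_p}\simeq\|G_r(f)\|_{L^p(d\nu_\Psi)}+\|G_r(\overline{f})\|_{L^p(d\nu_\Psi)}$, so it remains to show this sum is comparable to $\|MO_r(f)\|_{L^p(d\nu_\Psi)}$.

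One direction is pointwise and trivial: the constant $\widehat{f}_r(z)$ is an admissible trial function in the infimum defining $G_r(f)(z)$, so $G_r(f)(z)\leq MO_r(f)(z)$; since $\widehat{\overline{f}}_r(z)=\overline{\widehat{f}_r(z)}$ we have $MO_r(\overline{f})=MO_r(f)$, hence also $G_r(\overline{f})(z)\leq MO_r(f)(z)$. Integrating gives $\lesssim$ in \eqref{1e6}.

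For the converse I would prove a pointwise bound $MO_r(f)(z)\lesssim G_{r'}(f)(z)+G_{r'}(\overline{f})(z)$ with some $r'>r$. Fix $z$ and choose near-optimal holomorphic approximants $h$ of $f$ and $g$ of $\overline{f}$ on $D(z,r')$. Then the pluriharmonic function
$$h-\overline{g}=(f-\overline{g})-(f-h)$$
satisfies $\|h-\overline{g}\|_{L^2(D(z,r'))}\lesssim|D(z,r')|^{1/2}\bigl(G_{r'}(f)(z)+G_{r'}(\overline{f})(z)\bigr)$. Writing its canonical pluriharmonic decomposition $h-\overline{g}=\epsilon_{+}+\epsilon_{-}$ (holomorphic plus anti-holomorphic on $D(z,r')$) and using that $h$ is holomorphic while $\overline{g}$ is anti-holomorphic, one obtains $h=c+\epsilon_{+}$ and $\overline{g}=c-\epsilon_{-}$ for a common constant $c$. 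Normalizing $c$ so that $\epsilon_{+}(z)=0$, the mean-value property of holomorphic functions on the product-type set $D(z,r')$ yields $L^2$-orthogonality $\langle\epsilon_{+},\epsilon_{-}\rangle=\overline{\epsilon_{-}(z)}\,\epsilon_{+}(z)\,|D(z,r')|=0$, so $\|\epsilon_{+}\|_{L^2(D(z,r'))}\leq\|h-\overline{g}\|_{L^2(D(z,r'))}$. Using $c$ as the trial constant, $D(z,r)\subset D(z,r')$, and the fact that $|D(z,r')|/|D(z,r)|$ is controlled by a function of $r'/r$ alone,
$$MO_r(f)(z)^2\leq\frac{1}{|D(z,r)|}\int_{D(z,r)}|f-c|^2\,dV\lesssim G_{r'}(f)(z)^2+G_{r'}(\overline{f})(z)^2,$$
which is the claimed pointwise bound.

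Integrating against $d\nu_\Psi$, together with the fact that the $L^p(d\nu_\Psi)$-quantities built from $G_r$ are equivalent for different radii (implicit in Theorem~\ref{1p3}, which is valid for all $r>0$, and paralleling the discussion of $\textup{BDA}_{\Psi,r}$ in the introduction), completes \eqref{1e6}. The main obstacle is the anisotropic geometry of $D(z,r)$: the mean-value property, the orthogonality of the pluriharmonic decomposition, and the volume-ratio comparison must all be verified using the specific product structure adapted to $\Phi'$ and $\Psi'$ rather than a Euclidean ball; but all such ingredients were already established and repeatedly used in the proof of Theorem~\ref{1p3}, so the argument runs exactly as in Lemma~4.1 of \cite{ZWH}.
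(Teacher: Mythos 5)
Your proposal is correct and follows essentially the paper's intended route: the paper gives no details beyond deferring to \cite[Lemma 4.1]{ZWH}, which is exactly your scheme of applying Theorem \ref{1p3} to $f$ and $\overline{f}$ together with the pointwise comparison $G_r(f)+G_r(\overline{f})\simeq MO_r(f)$ obtained from near-optimal holomorphic approximants and the mean-value/orthogonality argument on the product-type set $D(z,r)$. One minor caution: your closing appeal to the equivalence of $\|G_r(f)\|_{L^p(\mathbb{C}^n,d\nu_\Psi)}$ across all radii is not established in the paper (radius-independence of $\textup{BDA}_{\Psi,r}$ is explicitly left open there), but it is also not needed, since your orthogonality argument already works with $r'=r$ and the corollary only asserts the conclusion for some $r$, so one can simply run the whole argument at the small radius furnished by the proof of Theorem \ref{1p3}.
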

The key ingredients in the proof of our main results are $\bar{\partial}$-techniques, Carleson measure and the decomposition theory of functions in $\textup{BDA}_{\Psi,r}$. As the canonical solution to $\bar{\partial}v=g\bar{\partial}f$, $H_fg$ is naturally connected with the $\bar{\partial}$-theory. The decomposition theory allows us to decompose $f\in\textup{BDA}_{\Psi,r}$ into two parts. One part is associated with  Carleson measures while we deal with the other part by the technique of
H\"{o}mander's $L^2$ estimates for the $\overline{\partial}$ operator. We mention that \cite{SY} also applies H\"{o}mander's $L^2$ theory when $f$ is holomorphic. However, the estimate in \cite{SY} cannot be straightforward extended to non-holomorphic functions. New estimates and methods must be developed(see Lemma \ref{3p8}).

There are two main difficulties we meet while studying Hankel operators on the Fock-type space, one for which is the non-isotropic behavior of the Bergman metric induced by $F^2_\Psi$ and the other is the lack of decay estimate on  $K(z, w)$ with the Bergman distance $\varrho(z, w)$. In these references \cite{B,HV2,I,IVW,XZ}, the Bergman balls are equivalent to Euclidean balls which makes it easier to do analysis along the complex tangential directions. On the classical Fock space $F^2$, we have an identity that
$$
   |K(z, w)|e^{-|z|^2}e^{-|w|^2}= e^{\frac 1 2 |z-w|^2} \ \ \textrm{for }\ z, w \in {\mathbb C}^n.
$$
On $F^2_\phi$ with $\phi\in C^2(\mathbb C^n)$ satisfying $i\partial \overline {\partial } \phi \simeq  i\partial \overline {\partial } |z|^2$, we have
$$
   |K(z, w)|e^{-\phi(z)}e^{-\phi(w)}\le C  e^{-\theta |z-w|} \ \ \textrm{for }\ z, w \in {\mathbb C}^n,
$$
where $C$  and $\theta$ are two positive constants. Similar estimate also holds on those Fock spaces studied in \cite{AT,CO,HL}. This estimate is crucial to the study on Fock spaces, but unfortunately there is no analogy we know on $F^2_\Psi$. This brings us to the new approach of the present work. More precisely, we use integral estimates for the reproducing kernel established in Section \ref{s4}.

The paper is organized as follows. In Section \ref{s2}, we give some preliminaries and background information. We recall some notions and key lemmas with some estimates of the reproducing kernel as well as some useful inequalities. In Section \ref{s3}, we characterize the boundedness and compactness of Hankel operators. Section \ref{s4} is devoted to give some integral estimates involving the reproducing kernel. In Section \ref{s5}, we prove our main result on the Schatten class membership of Toeplitz operators with positive measure symbols. In Section \ref{s6}, we
characterize the membership in the Schatten classes of Hankel operators.
As an application, we obtain a characterization of the simultaneous membership in $S_p$ of Hankel operators $H_{f}$ and $H_{\overline{f}}$ acting on $F^2_\Psi$.

In what follows, we will use the notation $a\lesssim b$ to indicate that there is a constant $C>0$ with $a\leq Cb$; and the expression $a\simeq b$ means that $a\lesssim b$ and $b\lesssim a$. $C$ will denote a positive constant whose value may change from one occasion to another but does not depend on the functions or operators in consideration.

\section{Preliminaries}\label{s2}
    Set $\Lambda_\Psi(z):=\log K(z,z)$ and
    $$\beta^2(z,\xi):=\sum_{j,k=1}^{n}\frac{\partial^2\Lambda_\Psi(z)}{\partial z_j\partial \overline{z}_k}\xi_j\overline{\xi_k},$$
    for arbitrary vectors $z=(z_1,\cdots,z_n)$ and $\xi=(\xi_1,\cdots,\xi_n)$ in $\mathbb{C}^n$. The associated function $\varrho(z,w)$, the Bergman distance, is given by
    $$\varrho(z,w):=\inf_{\gamma}\int_{0}^{1}\beta(\gamma(t),\gamma'(t))\,dt,$$
    where the infimum is taken over all piecewise $C^1$-smooth curves $\gamma:[0,1]\to \mathbb{C}^n$ such that $\gamma(0)=z$ and $\gamma(1)=w$. For any $z\in \mathbb{C}^n$ and $r>0$, denote by
    $B(z,r):=\{w\in\mathbb{C}^n:\varrho(z,w)<r\}$
    the Bergman ball centered at $z$ with radius $r$. Let $|B(z,r)|$ be the Euclidean volume of $B(z,r)$.

    Given some $r>0$, we say that a sequence of distinct points $\{z_k\}_{k=1}^\infty$ is a $\Psi$-lattice if the balls $\{B(z_k,r)\}_{k=1}^\infty$ cover $\mathbb{C}^n$ and $\{B(z_k,r/2)\}_{k=1}^\infty$ are pair-wisely disjoint. We refer to $r$ as the covering radius. From \cite[Lemma 7.4]{SY}, we know that given $r>0$ there exists a $\Psi$-lattice. Moreover, given a $\Psi$-lattice $\{z_k\}_{k=1}^\infty$ there exists an integer $N$ such that each $z\in \mathbb{C}^n$ belongs to at most $N$ balls of $B(z_k,2r)$. More specifically,
\begin{align}\label{2e1}
  \sum_{k=1}^{\infty}\chi_{B(z_k,2r)}(z)\leq N\textup{ for } z\in \mathbb{C}^n.
\end{align}
	In our analysis, it is convenient to use the ball $D(z,r)$. From \cite[Lemma 7.2]{SY}, we know that for every positive number $r$, there exist two positive numbers $r_1$ and $r_2$ such that for every $z\in \mathbb{C}^n$,
\begin{align}\label{2e2}
  D(z,r_1)\subseteq B(z,r)\subseteq D(z,r_2).
\end{align}
It follows that the Euclidean volume of $B(z,r)$ can be estimated as
\begin{align}\label{2e3}
  |B(z,r)|\simeq [\Phi'(|z|^2)]^{-1}[\Psi'(|z|^2)]^{-(n-1)}.
\end{align}
Moreover, given $r>0$ there exists a sequence $\{z_k\}_{k=1}^\infty$ and a positive constant $m$ such that $\cup_k D(z_k,r/2)=\mathbb{C}^n$ and $D(a_j,r/m)\cap D(a_k,r/m)=\emptyset$ for $j\neq k$.

    The following lemma gives an estimate on $K(z,w)$.
    \begin{lemma}\label{2p1}
      There exists a positive number $r_0$ such that
      \begin{align}\label{2e4}
        |K(z,w)|^2\simeq K(z,z)K(w,w),
      \end{align}
      for every $z\in \mathbb{C}^n$ and $w\in D(z,r_0).$
    \end{lemma}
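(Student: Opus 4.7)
The upper bound $|K(z,w)|^2 \leq K(z,z)K(w,w)$ is immediate and unconditional: by the reproducing property $K(z,w) = \langle K_w, K_z\rangle_\Psi$, Cauchy--Schwarz gives $|K(z,w)| \leq \|K_z\|_\Psi\|K_w\|_\Psi = K(z,z)^{1/2}K(w,w)^{1/2}$. So the substantive task is the matching lower bound, restricted to $w \in D(z,r_0)$ for a sufficiently small $r_0$.

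I would approach the lower bound via a localization argument for the normalized reproducing kernel $k_z = K_z/\|K_z\|_\Psi$. The standard sub-mean value inequality for holomorphic functions in $F^2_\Psi$, applied on the anisotropic ball $D(z,r)$ and combined with the volume estimate \eqref{2e3}, yields the on-diagonal asymptotic $K(z,z)\,e^{-\Psi(|z|^2)} \simeq |D(z,r)|^{-1}$. The growth conditions \eqref{cdt1}--\eqref{cdt3} on $\Psi$ ensure that $\Psi(|w|^2) - \Psi(|z|^2) = O(1)$ and $|D(w,r)| \simeq |D(z,r)|$ whenever $w \in D(z,r_0)$ with $r_0$ small, which already gives $K(z,z)e^{-\Psi(|z|^2)} \simeq K(w,w)e^{-\Psi(|w|^2)}$ on such a ball.

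The heart of the proof is a continuity estimate: for every holomorphic $f$ and every $w \in D(z,r_0)$,
\[
\bigl||f(w)|^2 e^{-\Psi(|w|^2)} - |f(z)|^2 e^{-\Psi(|z|^2)}\bigr| \leq \varepsilon(r_0)\,\sup_{\zeta \in D(z,r)} |f(\zeta)|^2 e^{-\Psi(|\zeta|^2)},
\]
with $\varepsilon(r_0) \to 0$ as $r_0 \to 0$. Taking $f = k_z$ and bounding the right-hand side through the sub-mean value by a constant multiple of $|k_z(z)|^2 e^{-\Psi(|z|^2)} = K(z,z)e^{-\Psi(|z|^2)}$, one then chooses $r_0$ small enough to force $|k_z(w)|^2 e^{-\Psi(|w|^2)} \gtrsim K(z,z) e^{-\Psi(|z|^2)}$. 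Unwinding $|k_z(w)|^2 = |K(z,w)|^2/K(z,z)$ together with the just-established comparability of the diagonal values delivers $|K(z,w)|^2 \gtrsim K(z,z)K(w,w)$, as required.

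The hardest part is establishing the continuity estimate in the anisotropic metric, since the two scales $\Phi'(|z|^2)^{-1/2}$ and $\Psi'(|z|^2)^{-1/2}$ defining $D(z,r)$ differ by roughly a factor of $|z|^{1/2}$, so that a naive Euclidean gradient bound for $f$ and $\Psi(|\cdot|^2)$ is too wasteful to close. One must exploit the explicit shape of $D(z,r)$ together with the second-order controls \eqref{cdt2} and \eqref{cdt3} on $\Psi$, which are exactly what guarantees that the oscillation of $|f|^2 e^{-\Psi(|\cdot|^2)}$ is genuinely $o(1)$ on $D(z,r_0)$ as $r_0 \to 0$, uniformly in $z$.
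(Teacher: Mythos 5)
Your Cauchy--Schwarz upper bound is fine, and the overall scheme for the lower bound (compare $|k_z(w)|^2e^{-\Psi(|w|^2)}$ with its value at $w=z$ through an oscillation estimate) is a legitimate route; note, however, that the paper does none of this work itself: it deduces the lemma in one line from the ball comparison \eqref{2e2} together with Lemma 8.1 of \cite{SY}, which is precisely the near-diagonal estimate on Bergman balls. The trouble with your write-up is that both pillars of your lower bound are asserted rather than proved, and one is misattributed. First, the two-sided diagonal asymptotic $K(z,z)e^{-\Psi(|z|^2)}\simeq |D(z,r)|^{-1}$ does \emph{not} follow from ``the standard sub-mean value inequality combined with \eqref{2e3}'': applying Lemma \ref{2p3} to $K_z$ yields only the upper bound $K(z,z)e^{-\Psi(|z|^2)}\lesssim |D(z,r)|^{-1}$, while the matching lower bound is exactly \eqref{ex2e6}, i.e. $\|K_z\|_\Psi^2\simeq \Phi'(|z|^2)\Psi'(|z|^2)^{n-1}e^{\Psi(|z|^2)}$, which in this setting comes from the Laplace-type kernel asymptotics of \cite{SY} (Lemma \ref{2p2}), not from averaging. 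Your closing step needs precisely this lower bound, in order to dominate $\varepsilon(r_0)\sup_{\zeta\in D(z,r)}|k_z(\zeta)|^2e^{-\Psi(|\zeta|^2)}\lesssim \varepsilon(r_0)|D(z,r)|^{-1}$ by a small multiple of $K(z,z)e^{-\Psi(|z|^2)}$, so without it the argument does not close. There is also a circularity risk: the paper states Lemma \ref{2p3} only for radii $r\le r_0$ with $r_0$ the constant \emph{from Lemma \ref{2p1}}, so you cannot simply quote it while proving Lemma \ref{2p1} unless you establish the sub-mean value property independently.

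Second, the continuity estimate, which you yourself call the heart of the proof, is only announced. It is provable, but the proof is the real content: one needs a holomorphic approximation of the weight at the scale of $D(z,r)$, e.g. $H_z(\zeta)=\Psi(|z|^2)+2\Psi'(|z|^2)\bigl(\langle \zeta,z\rangle-|z|^2\bigr)$, for which $|\Psi(|\zeta|^2)-\mathrm{Re}\,H_z(\zeta)|\lesssim r^2$ on $D(z,r)$ (using $x\Psi''(x)\le \Phi'(x)$ and the comparability in Lemma \ref{3p1}), and then Cauchy estimates for the holomorphic function $f e^{-H_z/2}$ on the anisotropic polydisc; none of this construction appears in your sketch, and \eqref{cdt2}--\eqref{cdt3} alone do not ``guarantee'' the $o(1)$ oscillation without it. Incidentally, your intermediate claim that $\Psi(|w|^2)-\Psi(|z|^2)=O(1)$ for $w\in D(z,r_0)$ is false already for the classical weight $\Psi(x)=x$, where this difference is of order $|z|r_0$; fortunately it is not needed, since the weighted diagonal comparability $K(z,z)e^{-\Psi(|z|^2)}\simeq K(w,w)e^{-\Psi(|w|^2)}$ follows from the diagonal asymptotic plus $|D(z,r)|\simeq|D(w,r)|$. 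In short: a reasonable roadmap, but the two estimates you lean on are exactly the content the paper imports from Seip--Youssfi, and as written the proposal has genuine gaps at both points.
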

    \begin{proof}
      By the relation \eqref{2e2} and Lemma 8.1 of \cite{SY}, we obtain the desired result.
    \end{proof}
    We also need the following pointwise estimate on the reproducing kernel which was obtained in \cite{SY}.
    \begin{lemma}\label{2p2}
Let $z$ and $w$ be any points in $\mathbb{C}^n$ such that $\langle z,w \rangle \neq 0$, and write $\langle z,w \rangle =re^{i\theta}$, where $0<r<\infty$ and $-\pi <\theta\leq \pi$. Then
\begin{equation}\label{2e5}
  \frac{1}{[\Psi '(r)]^{n-1}}\frac{|K(z,w)|}{e^{\Psi(r)}}\lesssim\left\{
    \begin{aligned}
      \Phi '(r),\qquad\qquad\qquad\quad &|\theta|\leq \theta_0(r),\\
      r^{-3/2}[\Phi '(r)]^{-1/2}|\theta|^{-3},\quad&|\theta|> \theta_0(r),
    \end{aligned}
  \right.
\end{equation}
where $\theta_0(r)=[r\Phi '(r)]^{-1/2}$. Moreover, there exists a positive constant $c$ such that if $\theta<c\theta_0(r)$, then
\begin{align}\label{2e6}
  |K(z,w)|\gtrsim \Phi '(r)[\Psi '(r)]^{n-1}e^{\Psi(r)}.
\end{align}
In particular,
\begin{align}\label{ex2e6}
  \|K_z\|_{\Psi}^2\simeq \Phi '(|z|^2)[\Psi '(|z|^2)]^{n-1}e^{\Psi(|z|^2)}.
\end{align}
\end{lemma}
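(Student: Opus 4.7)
The plan is to exploit the rotational invariance of the weight $e^{-\Psi(|z|^2)}$, which forces the reproducing kernel to depend on $z,w$ only through $\langle z,w\rangle$. Because the monomials $z^\alpha$ are orthogonal in $F^2_\Psi$, I would first compute $\|z^\alpha\|_\Psi^2$ using polar coordinates in $\mathbb{C}^n$ and then sum over $\alpha$ via the multinomial theorem to obtain the single-variable power-series representation
\[
K(z,w)=\sum_{m=0}^{\infty} b_m \langle z,w\rangle^m,\qquad b_m=\frac{(m+n-1)!}{c_n(n-1)!\,m!\,J_m},
\]
where $J_m=\int_0^\infty s^{m+n-1}e^{-\Psi(s)}\,ds$ and $c_n$ is a dimensional constant. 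This reduces the multivariable kernel to a one-variable saddle-point problem in the scalar $\zeta:=\langle z,w\rangle=re^{i\theta}$.

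Next, I would estimate $J_m$ by Laplace's method. The integrand $e^{(m+n-1)\log s-\Psi(s)}$ has a unique maximum at $s_m$ determined by $\Phi(s_m)=m+n-1$, with second-derivative $-\Phi'(s_m)/s_m$ at that point (a short calculation using $\Phi(s)=s\Psi'(s)$). This yields $J_m \simeq s_m^{m+n-1} e^{-\Psi(s_m)}\sqrt{2\pi s_m/\Phi'(s_m)}$, and hence
\[
A_m:=b_m r^m \simeq [\Psi'(s_m)]^{n-1}\sqrt{\frac{\Phi'(s_m)}{s_m}}\;e^{\Psi(s_m)}\left(\frac{r}{s_m}\right)^{m}.
\]
As a function of the continuous variable $m$, $\log A_m$ is concave and maximized at $M:=\Phi(r)$ (where $s_M=r$), with curvature of order $1/(r\Phi'(r))$. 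So the effective Gaussian width of $A_m$ is $\sigma:=\sqrt{r\Phi'(r)}=1/\theta_0(r)$. The peak value times the width already recovers $A_M\cdot\sigma\simeq \Phi'(r)[\Psi'(r)]^{n-1}e^{\Psi(r)}$, which when specialized to $z=w$ yields (\ref{ex2e6}).

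With these ingredients, the two-regime estimate in (\ref{2e5}) follows from analyzing the oscillatory sum $\sum_m A_m e^{im\theta}$. When $|\theta|\leq\theta_0(r)$ the phase is essentially constant across the effective support $[M-\sigma,M+\sigma]$, giving the same bound as at $\theta=0$; when furthermore $|\theta|<c\theta_0(r)$ with $c$ small, the real part of the Gaussian sum dominates the error, yielding the lower bound (\ref{2e6}). For $|\theta|>\theta_0(r)$, oscillation must be exploited: I would apply summation by parts three times to extract three factors of $\theta^{-1}$, leaving a residual of order $r^{-3/2}[\Phi'(r)]^{-1/2}$ after balancing against the amplitude.

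The main obstacle will be the summation-by-parts argument in the oscillatory regime: to get a clean $|\theta|^{-3}$ bound, one must control the discrete derivatives $\Delta A_m,\Delta^2 A_m,\Delta^3 A_m$ uniformly across the effective range of $m$, and this is precisely where the smoothness hypotheses (\ref{cdt1})--(\ref{cdt3}) enter, bounding $\Psi''$ and $\Phi''$ so that $s_m=\Phi^{-1}(m+n-1)$ and its derivatives behave regularly. A secondary nuisance is tracking errors in the lower bound: one must keep the Gaussian remainder strictly smaller than the main term, which forces $c$ to be chosen so that $e^{im\theta}$ does not rotate by more than a fixed fraction of a radian across the effective support of $A_m$.
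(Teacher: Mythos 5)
The paper offers no proof of this lemma: the estimate is quoted from Seip--Youssfi \cite{SY}, and the ``proof'' in the present paper is a citation. Your outline is essentially a reconstruction of the argument in \cite{SY}: unitary invariance plus the multinomial theorem give $K(z,w)=\sum_m b_m\langle z,w\rangle^m$ with $b_m$ expressed through the moments $J_m=\int_0^\infty s^{m+n-1}e^{-\Psi(s)}\,ds$; Laplace's method at the critical point $\Phi(s_m)=m+n-1$ (where indeed the second derivative of $(m+n-1)\log s-\Psi(s)$ equals $-\Phi'(s_m)/s_m$) gives the size of $A_m=b_mr^m$; the peak is at $m\simeq\Phi(r)$ with width $\sigma=\sqrt{r\Phi'(r)}=\theta_0(r)^{-1}$; the first case of \eqref{2e5} and \eqref{ex2e6} come from $A_M\sigma\simeq\Phi'(r)[\Psi'(r)]^{n-1}e^{\Psi(r)}$ (the regime $|\theta|\le\theta_0(r)$ needs only the triangle inequality), and the second case has the right numerology, since $|\theta|^{-3}\sum_m|\Delta^3A_m|\approx A_M\sigma^{-2}|\theta|^{-3}$ matches $r^{-3/2}\Phi'(r)^{-1/2}|\theta|^{-3}[\Psi'(r)]^{n-1}e^{\Psi(r)}$. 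So the route is the correct one and is the same as in the cited source.

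As a proof, however, the decisive steps are deferred rather than carried out, and these are genuine gaps. First, the Laplace asymptotics $J_m\simeq s_m^{m+n-1}e^{-\Psi(s_m)}\sqrt{s_m/\Phi'(s_m)}$ must be proved uniformly in $m$ with error control; this is exactly where hypotheses \eqref{cdt1}--\eqref{cdt3} enter (they guarantee that the integrand is genuinely Gaussian on a window much larger than $\sqrt{s_m/\Phi'(s_m)}$ and that the complement contributes negligibly), and without it the formula for $A_m$ is unjustified; likewise, replacing $[\Psi'(s_m)]^{n-1}$ by $[\Psi'(r)]^{n-1}$ uniformly over the effective range of $m$ uses \eqref{cdt3} when $n>1$. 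Second, in the regime $|\theta|>\theta_0(r)$ the triple summation by parts needs $\sum_m|\Delta^3A_m|\lesssim A_M\sigma^{-2}$ over \emph{all} $m$, including the far tails where $A_m$ is no longer Gaussian and $s_m$ is far from $r$; concavity of $\log A_m$ alone does not give summability of the third differences there, so the tail regime requires a separate argument. Third, the lower bound \eqref{2e6} is not just a matter of the phase turning by a small angle over the effective support: you must show that the real part of the near-peak block dominates the total contribution of all remaining $m$, which again requires quantitative off-peak decay of $A_m$ coming from the uniform Laplace estimates. In short, the plan is sound and coincides with the approach of \cite{SY}, but as written it is an outline whose hardest estimates are acknowledged, not established.
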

The following weighted Bergman inequality is useful in our analysis. We can find its proof in \cite{SY} for $p=2$. We observe that it holds for all $0<p<\infty$.
\begin{lemma}\label{2p3}
  Let $0<p<\infty$ and $r_0$ be the constant from Lemma \ref{2p1}. Suppose $0<r\leq r_0$. Then there is a constant $C$ such that
  \begin{align}\label{2e7}
    \left|f(z)e^{-\frac{1}{2}\Psi(|z|^2)}\right|^p\leq \frac{C}{|B(z,r)|}\int_{B(z,r)}\left|f(w)e^{-\frac{1}{2}\Psi(|w|^2)}\right|^p\,dV(w),
  \end{align}
for all $f\in H(\mathbb{C}^n)$ and $z\in \mathbb{C}^n$.
\end{lemma}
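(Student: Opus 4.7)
The case $p=2$ is supplied by \cite{SY}, so my plan is to reduce the general $0<p<\infty$ case to the plurisubharmonicity of $|f|^p$, applied on an anisotropic polydisc contained in $B(z,r)$, together with the comparability of the Gaussian weight $e^{-\Psi(|w|^2)}$ on Bergman balls of radius $\le r_0$.

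\emph{Step 1: Weight comparability on $B(z,r)$.} I first show that for $r\le r_0$ and $w\in B(z,r)$, one has $e^{-\Psi(|w|^2)}\simeq e^{-\Psi(|z|^2)}$ with constants depending only on $r$. Since $B(z,r)\subseteq D(z,r_2)$ by \eqref{2e2}, Lemma \ref{2p1} yields $|K(z,w)|^2\simeq K(z,z)K(w,w)$, and combining this with the pointwise kernel lower bound \eqref{2e6} (applied to a shorter chain of intermediate points along a minimizing Bergman geodesic) produces $K(z,z)\simeq K(w,w)$. Invoking the identity \eqref{ex2e6} for both $z$ and $w$ and using that $\Phi'$ and $\Psi'$ are comparable on $B(z,r)$ (which follows from the anisotropic size estimate \eqref{2e3} and the regularity hypotheses \eqref{cdt1}--\eqref{cdt3}), the weight comparability drops out.

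\emph{Step 2: Plurisubharmonic sub-mean value.} Since $f\in H(\mathbb{C}^n)$, the function $|f|^p$ is plurisubharmonic for every $p>0$. Pick an orthonormal basis adapted to $z$ with $e_1=z/|z|$ (or any basis if $z=0$); then $D(z,r_1)$ contains a polydisc $\Delta$ centered at $z$ with polyradius $\simeq \Phi'(|z|^2)^{-1/2}$ in the $e_1$-direction and polyradius $\simeq \Psi'(|z|^2)^{-1/2}$ in the remaining directions, and by \eqref{2e2}--\eqref{2e3} one has $|\Delta|\simeq |B(z,r)|$. Iterating the one-variable sub-mean value inequality for subharmonic functions over the coordinate discs of $\Delta$ gives
\[
|f(z)|^p \;\le\; \frac{1}{|\Delta|}\int_\Delta |f(w)|^p\,dV(w)\;\le\; \frac{C}{|B(z,r)|}\int_{B(z,r)}|f(w)|^p\,dV(w).
\]
Multiplying by $e^{-p\Psi(|z|^2)/2}$ and replacing $e^{-p\Psi(|z|^2)/2}$ under the integral sign by $e^{-p\Psi(|w|^2)/2}$ (at the cost of a bounded constant, by Step 1) yields exactly \eqref{2e7}.

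\emph{Main obstacle.} The crux is Step 1: the Bergman ball $B(z,r)$ is genuinely anisotropic, stretching to size $\Phi'(|z|^2)^{-1/2}$ along the complex line through $z$ while contracting to $\Psi'(|z|^2)^{-1/2}$ transversally, and the first-order change in $\Psi(|w|^2)$ as $w$ moves within $D(z,r)$ must be shown to be $O(1)$ uniformly in $z$. The transversal change is harmless because it is controlled by $\Psi'(|z|^2)\cdot\Psi'(|z|^2)^{-1}=1$, whereas the radial change involves $\Psi'(|z|^2)\cdot|z|\,\Phi'(|z|^2)^{-1/2}$, which is where the smoothness assumptions \eqref{cdt1}--\eqref{cdt3} (and the resulting comparability of $\Phi'$ and $\Psi'$) come into play. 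Once this uniform estimate is in hand, the rest of the argument is essentially soft, and the conclusion follows for every $0<p<\infty$ in one stroke via plurisubharmonicity.
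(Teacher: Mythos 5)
Your Step~1 is false, and the whole argument rests on it. The weight $e^{-\Psi(|w|^2)}$ is \emph{not} comparable to $e^{-\Psi(|z|^2)}$ on $B(z,r)$, nor is $K(w,w)\simeq K(z,z)$ there: already in the model case $\Psi(x)=x$ (the classical Fock space, which satisfies \eqref{cdt1}--\eqref{cdt3}), $B(z,r)$ is comparable to a Euclidean ball of radius $\simeq r$, and taking $w=z+rz/|z|$ gives $\Psi(|w|^2)-\Psi(|z|^2)=2r|z|+r^2\to\infty$, so the ratio of the weights is of size $e^{\pm 2r|z|}$, unbounded in $z$. Lemma \ref{2p1} only compares the off-diagonal quantity $|K(z,w)|^2$ with the \emph{product} $K(z,z)K(w,w)$; combined with \eqref{ex2e6} and Lemma \ref{3p1} it yields $K(w,w)/K(z,z)\simeq e^{\Psi(|w|^2)-\Psi(|z|^2)}$, which blows up radially exactly as above, so no chain argument along a geodesic can give $K(w,w)\simeq K(z,z)$. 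Your ``main obstacle'' paragraph correctly isolates the dangerous term $r|z|\Psi'(|z|^2)\Phi'(|z|^2)^{-1/2}$, but no hypothesis of the paper makes it bounded: since $\Phi'\ge\Psi'$, the best generic bound is $2r[\Phi(|z|^2)]^{1/2}$, and in the classical case it equals $2r|z|$. Comparability of $\Phi'(|w|^2)$ with $\Phi'(|z|^2)$ (Lemma \ref{3p1}) controls the geometry of the balls, not the oscillation of $\Psi(|w|^2)$ itself, so the ``soft'' conclusion of Step~2 cannot be reached by freezing the weight.

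The missing idea is to absorb the weight into a zero-free holomorphic factor instead of freezing it. For $0<\rho\le r_0$ one has $|k_z(w)|^2e^{-\Psi(|w|^2)}\simeq |D(z,\rho)|^{-1}$ for all $w\in D(z,\rho)$ (this is precisely \eqref{3e31}, a consequence of Lemma \ref{2p1}, Lemma \ref{3p1} and \eqref{ex2e6}), and $k_z$ has no zeros on the convex set $D(z,r_0)$, so a holomorphic branch of $\log k_z$ exists there and $u:=|f|^p|k_z|^{-p}$ is plurisubharmonic on $D(z,r_0)$. Now run your Step~2 (polydisc sub-mean value plus the volume comparisons \eqref{2e2}--\eqref{2e3}) on $u$ rather than on $|f|^p$, over $D(z,\rho)$ with $\rho\le r_0$ chosen so that $D(z,\rho)\subseteq B(z,r)$ and $|D(z,\rho)|\simeq|B(z,r)|$: since $u(z)\simeq |D(z,\rho)|^{p/2}\,|f(z)e^{-\frac12\Psi(|z|^2)}|^p$ and $u(w)\simeq |D(z,\rho)|^{p/2}\,|f(w)e^{-\frac12\Psi(|w|^2)}|^p$ on $D(z,\rho)$, the unbounded factors cancel and \eqref{2e7} follows for every $0<p<\infty$. (In the classical case $k_z(w)=e^{\langle w,z\rangle-\frac12|z|^2}$ and this is the textbook proof.) Note that the paper itself offers no written proof beyond citing \cite{SY} for $p=2$, but any correct argument must pass through the kernel estimate (or an equivalent device), not through weight comparability on Bergman balls.
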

\begin{lemma}\label{2p4}
Let $0<p,r<\infty$ and $\mu$ be a positive Borel measure on $\mathbb{C}^n$. Then there exists a positive constant $C$ such that
\begin{align}\label{2e8}
  \int_{\mathbb{C}^n}\left|f(z)e^{-\frac{1}{2}\Psi(|z|^2)}\right|^pd\mu(z)\leq C
\int_{\mathbb{C}^n}\left|f(z)e^{-\frac{1}{2}\Psi(|z|^2)}\right|^p\widehat{\mu}_r(z)dV(z),
\end{align}
 for all $f\in H(\mathbb{C}^n)$ and $z\in \mathbb{C}^n$.
\end{lemma}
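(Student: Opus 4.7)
The plan is a short application of the pointwise sub-mean-value inequality of Lemma \ref{2p3}, followed by Fubini and the comparability of Bergman balls with the ellipsoids $D(z,r)$ recorded in \eqref{2e2}--\eqref{2e3}.

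First, I would fix $s\in(0,r_0]$ small enough that the inclusion $B(z,s)\subseteq D(z,r)$ holds for every $z\in\mathbb{C}^n$; by \eqref{2e2} this is possible for any given $r>0$ (as $s\to 0$ the enclosing $D$-radius shrinks to $0$). Writing $F(z):=\left|f(z)e^{-\Psi(|z|^2)/2}\right|^{p}$, Lemma \ref{2p3} yields
$$F(z)\leq \frac{C}{|B(z,s)|}\int_{B(z,s)}F(w)\,dV(w),\qquad z\in\mathbb{C}^n.$$
Integrating against $d\mu(z)$ and swapping the order of integration — since $w\in B(z,s)$ is equivalent to $z\in B(w,s)$ — gives
$$\int_{\mathbb{C}^n}F(z)\,d\mu(z)\leq C\int_{\mathbb{C}^n}F(w)\left(\int_{B(w,s)}\frac{d\mu(z)}{|B(z,s)|}\right)dV(w).$$

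From the volume formula \eqref{2e3}, $|B(\zeta,s)|\simeq [\Phi'(|\zeta|^2)]^{-1}[\Psi'(|\zeta|^2)]^{-(n-1)}$ with constant depending only on $s$, so the triangle-inequality inclusions $B(w,s)\subseteq B(z,2s)\subseteq B(w,3s)$ for $z\in B(w,s)$ yield the doubling estimate $|B(z,s)|\simeq |B(w,s)|$ uniformly in $z,w$. Hence the inner integral above is $\simeq \mu(B(w,s))/|B(w,s)|$. The choice of $s$ forces $B(w,s)\subseteq D(w,r)$, while \eqref{2e3} also gives $|B(w,s)|\simeq |D(w,r)|$ (with constants depending on $s$ and $r$), so
$$\frac{\mu(B(w,s))}{|B(w,s)|}\leq \frac{\mu(D(w,r))}{|B(w,s)|}\simeq \widehat{\mu}_r(w),$$
and substituting into the preceding display delivers \eqref{2e8}.

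The only mildly delicate step is the coordinated choice of $s$ — simultaneously $s\leq r_0$ so that Lemma \ref{2p3} applies, and $s$ small enough, via \eqref{2e2}, that $B(\cdot,s)\subseteq D(\cdot,r)$ — after which the proof reduces entirely to Fubini and the bounded-geometry consequences of \eqref{2e2}--\eqref{2e3}. I do not expect any conceptual obstacle beyond that coordination.
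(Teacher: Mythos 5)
Your argument is correct and, in essence, complete. Note that the paper does not actually prove Lemma \ref{2p4} here: it simply cites \cite[Lemma 15]{WTH}. Your Fubini-based proof (pointwise sub-mean-value inequality from Lemma \ref{2p3}, exchange of integration using $w\in B(z,s)\Leftrightarrow z\in B(w,s)$, and the uniform volume comparisons coming from \eqref{2e3}) is therefore a legitimate self-contained substitute, and it is the same mechanism the paper itself deploys later for the $0<p\leq 1$ refinement in Lemma \ref{qt11}, where your Fubini step is replaced by a lattice covering with finite overlap \eqref{2e1}. The one point to tighten is the selection of $s$: as stated, \eqref{2e2} only says that for each Bergman radius $s$ there is \emph{some} $r_2(s)$ with $B(z,s)\subseteq D(z,r_2(s))$; it does not by itself guarantee that $r_2(s)\leq r$ for the given $r$, so the inclusion $B(\cdot,s)\subseteq D(\cdot,r)$ needs the quantitative metric comparison \eqref{3e18} (for small $s$ a continuity/first-exit argument gives $B(z,s)\subseteq D(z,ts)$, exactly as the paper argues around \eqref{ex3e22}), or a direct appeal to the quantitative form of \cite[Lemma 7.2]{SY}. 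With that adjustment, the remaining steps — $|B(z,s)|\simeq|B(w,s)|$ for $z\in B(w,s)$, $|B(w,s)|\simeq|D(w,r)|$, and the resulting bound by $\widehat{\mu}_r(w)$ — are all justified, with the constant depending only on $p$, $r$ and the chosen $s(r)$.
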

\begin{proof}
  See \cite[Lemma 15]{WTH}.
\end{proof}

\section{Boundedness and Compactness of Hankel Operators}\label{s3}
To establish the decomposition theory of the space $\textup{BDA}_{\Psi,r}$, we need to construct a partition of unity subordinated to a certain lattice. We begin with the following lemma.
\begin{lemma}\label{3p1}
Let $0<r\leq 1/8$. For every $z\in \mathbb{C}^n$ and $w\in D(z,r)$, we have
$$\Psi '(|z|^2)\simeq \Psi '(|w|^2)\textup{ and } \Phi '(|z|^2)\simeq \Phi '(|w|^2).$$
\end{lemma}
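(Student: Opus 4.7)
The plan is to show that for every $w\in D(z,r)$ the squared modulus $|w|^2$ is close enough to $|z|^2$ that the continuous, strictly positive functions $\Phi'$ and $\Psi'$ cannot vary by more than a multiplicative constant on the relevant range. The key input for the asymptotic region is the growth bound on $\Phi''$ and $\Psi''$ from \eqref{cdt2} and \eqref{cdt3}.

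First I would estimate $\bigl||w|^2 - |z|^2\bigr|$. Since $P_z w$ and $w - P_z w$ are orthogonal in $\mathbb{C}^n$, $|w|^2 = |P_z w|^2 + |w - P_z w|^2$; and since $P_z w$ lies on the complex line $\mathbb{C} z$, the triangle inequality gives $|P_z w|\le |z|+|P_z w - z|$. The defining inequalities of $D(z,r)$ then yield
\[
\bigl||w|^2 - |z|^2\bigr| \leq 2r|z|\,\Phi'(|z|^2)^{-1/2} + r^2\Phi'(|z|^2)^{-1} + r^2\Psi'(|z|^2)^{-1}.
\]
For $n=1$ the second defining inequality of $D(z,r)$ is vacuous (since $P_z$ is the identity on $\mathbb{C}$), and only the first two terms on the right appear. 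Using $\Phi'(x) = \Psi'(x) + x\Psi''(x) \geq \Psi'(x)$ from \eqref{cdt1}, and the fact that $\Phi(x) = x\Psi'(x)\to\infty$ as $x\to\infty$, the right-hand side divided by $|z|^2$ tends to zero as $|z|\to\infty$.

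Next I would split into two cases. If $|z|$ is bounded, then $\{z\}\cup D(z,r)$ sits in a fixed compact subset of $\mathbb{C}^n$ on which the continuous, strictly positive functions $\Psi'$ and $\Phi'$ have uniform positive upper and lower bounds, so the claim is immediate. For large $|z|$, I would apply the mean value theorem,
\[
\Phi'(|w|^2) - \Phi'(|z|^2) = \Phi''(\xi)\,(|w|^2 - |z|^2),
\]
for some $\xi$ between $|w|^2$ and $|z|^2$. The previous step gives $\xi\simeq|z|^2$, so by \eqref{cdt2} we have $\Phi''(\xi)\lesssim |z|^{-1}\Phi'(|z|^2)^{1+\eta}$; inserting the estimate for $\bigl||w|^2-|z|^2\bigr|$ yields
\[
\biggl|\frac{\Phi'(|w|^2)}{\Phi'(|z|^2)} - 1\biggr| \lesssim r\,\Phi'(|z|^2)^{\eta - 1/2},
\]
which is $o(1)$ as $|z|\to\infty$ since $\eta<1/2$ and $\Phi'(x)\to\infty$. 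Hence $\Phi'(|w|^2)\simeq\Phi'(|z|^2)$ for large $|z|$, and choosing $r\leq 1/8$ makes the implied constants harmless. An identical argument using \eqref{cdt3} treats $\Psi'$ when $n>1$; when $n=1$, the bound $\Psi''(x)\leq \Phi'(x)/x$ (again from $\Phi'(x) = \Psi'(x)+x\Psi''(x)$) transfers the already-proved control of $\Phi'$ to $\Psi'$ without invoking \eqref{cdt3}.

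The delicate point is the implicit bootstrap: to apply the mean value estimate I need the intermediate $\xi$ to satisfy $\xi\simeq|z|^2$, but verifying $|w|^2\simeq|z|^2$ itself uses the bound on $\bigl||w|^2-|z|^2\bigr|$. The condition $\eta<1/2$ in \eqref{cdt2} and \eqref{cdt3} is precisely what closes this loop by turning the relative error into a genuine $o(1)$ quantity uniformly in $w\in D(z,r)$ as $|z|\to\infty$.
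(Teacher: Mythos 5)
Your route is genuinely different from the paper's --- you work directly from \eqref{cdt2}/\eqref{cdt3} via a mean-value argument, whereas the paper deduces the $\Psi'$-comparison from $|z-w|\le 2r\Psi'(|z|^2)^{-1/2}$ together with \cite[Lemma 7.1]{SY}, and then gets the $\Phi'$-comparison indirectly from the inclusion \eqref{2e2} and the volume estimate \eqref{2e3}, without invoking \eqref{cdt2} directly --- but as written your argument has genuine gaps. First, \eqref{cdt2} gives $\Phi''(\xi)\lesssim \xi^{-1/2}\Phi'(\xi)^{1+\eta}$, and when $|w|>|z|$ the intermediate point only satisfies $\Phi'(\xi)\le\Phi'(|w|^2)$ by monotonicity; your replacement of $\Phi'(\xi)$ by $\Phi'(|z|^2)$ is exactly the comparison being proved. (The bootstrap is not where you locate it: $\xi\simeq|z|^2$ follows harmlessly from the lower bounds $\Phi',\Psi'\ge\Psi'(0)>0$; the circular step is $\Phi'(\xi)\lesssim\Phi'(|z|^2)$, and asserting that $\eta<1/2$ ``closes the loop'' is not an argument.) The repair is to integrate the differential inequality rather than use a two-point mean value theorem: for instance, for $\eta>0$, $\frac{d}{dx}\Phi'(x)^{-\eta}\ge -C\eta x^{-1/2}$ integrated over $[|z|^2,|w|^2]$ gives $\Phi'(|w|^2)^{-\eta}\ge \Phi'(|z|^2)^{-\eta}-C'\,(|w|-|z|)$ with $|w|-|z|\lesssim r\,\Phi'(|z|^2)^{-1/2}$ up to harmless terms, and only then does $\eta<1/2$ genuinely enter. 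Second, the claim that the relative error is $o(1)$ ``since $\Phi'(x)\to\infty$'' is false in general: for the classical weight $\Psi(x)=x$ one has $\Phi'\equiv 1$. Since $\Phi'$ is nondecreasing this is repairable by a dichotomy (bounded $\Rightarrow$ both comparisons are trivial; unbounded $\Rightarrow$ $\Phi'\to\infty$), but without it your estimate only yields $\Phi'(|w|^2)\le(1+Cr)\Phi'(|z|^2)$ with a constant $C$ depending on $\Psi$ that you cannot force below $1/r$, so the lower bound does not follow.

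The more serious gap is the case $n=1$ for $\Psi'$. Your transfer needs $\Psi''(\xi)\,\bigl||w|^2-|z|^2\bigr|\lesssim\Psi'(|z|^2)$, and with $\Psi''(\xi)\le\Phi'(\xi)/\xi$ and $\bigl||w|^2-|z|^2\bigr|\lesssim r|z|\,\Phi'(|z|^2)^{-1/2}$ this amounts to $\Phi'(x)\lesssim x\Psi'(x)^2$, i.e.\ $\Psi''\lesssim(\Psi')^2$. The identity $\Phi'=\Psi'+x\Psi''$ gives only $\Psi''\le\Phi'/x$ and provides no upper bound for $\Phi'$ in terms of $\Psi'$; in the paper the inequality $\Phi'(x)\lesssim x\Psi'(x)^2$ is precisely \eqref{3e5}, and it is derived from \eqref{cdt3} --- the hypothesis you are explicitly not allowed to use when $n=1$. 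So this step is unsupported, and it is not clear it can be salvaged from \eqref{cdt1}--\eqref{cdt2} alone. The simplest fix is the paper's: once $\bigl||z|^2-|w|^2\bigr|\le|z|\Psi'(|z|^2)^{-1/2}$ is in hand (your first estimate, using $\Phi'\ge\Psi'$), quote \cite[Lemma 7.1]{SY} for the $\Psi'$-comparison, and then obtain the $\Phi'$-comparison for free from \eqref{2e2} and \eqref{2e3}.
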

\begin{proof}
  It follows immediately from the definition of $\Phi$ that $\Psi '(|z|^2)\leq \Phi '(|z|^2)$. Hence, for $w\in D(z,r)$, we have
  $$|z-w|\leq |z-P_zw|+|w-P_zw|\leq 2r[\Psi '(|z|^2)]^{-1/2}.$$
  On the other hand,
  \begin{align*}
    \left||z|^2-|w|^2\right|\leq |z-w|(|z|+|w|)\leq |z-w|(|z|+|z-w|+|z|)
    =|z-w|^2+2|z|\cdot|z-w|.
  \end{align*}
  It follows that for $|z|\geq [\Psi(0)]^{1/2}$ and $r\leq 1/8$,
  $$\left||z|^2-|w|^2\right|\leq \left(4r^2|z|^{-1}[\Psi '(|z|^2)]^{-1/2}+4r \right) |z|[\Psi '(|z|^2)]^{-1/2}\leq |z|[\Psi '(|z|^2)]^{-1/2}.$$
  By applying \cite[Lemma 7.1]{SY} with $\alpha=1/2$, we know that
  $$\Psi '(|w|^2)=(1+o(1))\Psi '(|z|^2)\textup{ as }|z|\to \infty.$$
  Therefore, we have for every $z\in \mathbb{C}^n$ and $w\in D(z,r)$
  \begin{align}\label{3e1}
    \Psi '(|w|^2)\simeq \Psi '(|z|^2).
  \end{align}
  From \eqref{2e2}, we know that there is a positive constant $r_1$ such that $D(z,r)\subseteq B(z,r_1).$
  If $w \in D(z,r)$, then $B(w,r_1)\subseteq B(z,2r_1)$ and $B(z,r_1)\subseteq B(w,2r_1)$.
  This, together with \eqref{2e3}, implies that
  $$[\Phi '(|w|^2)]^{-1}[\Psi '(|w|^2)]^{1-n}\lesssim [\Phi '(|z|^2)]^{-1}[\Psi '(|z|^2)]^{1-n},$$
  and
  $$[\Phi '(|z|^2)]^{-1}[\Psi '(|z|^2)]^{1-n}\lesssim [\Phi '(|w|^2)]^{-1}[\Psi '(|w|^2)]^{1-n}.$$
  Combining the last two inequalities with \eqref{3e1}, we obtain $\Phi '(|w|^2)\simeq \Phi '(|z|^2)$
   for every $z\in \mathbb{C}^n$ and $w\in D(z,r)$. The proof is completed.
\end{proof}
Recall that for $v\in C^2(\mathbb{C}^n)$, the $\overline{\partial}$ operator is defined as
$$\overline{\partial}v=\sum_{j=1}^{n}\frac{\partial v}{\partial \overline{z}_j}d\overline{z}_j,$$
and the $\overline{\partial}$ operator acting on a (0,1) form $u=\sum_{j=1}^{n}u_jd\overline{z_j}$ is defined by the formula
$$\overline{\partial}u=\sum_{1\leq j<k\leq n}\left(\frac{\partial u_j}{\partial \overline{z}_k}-\frac{\partial u_k}{\partial \overline{z}_j}\right)d\overline{z}_j\land d\overline{z}_k.$$
Set
$$|u|^2=\sum_{j=1}^{n}|u_j|^2\textup{ and } |\overline{\partial}u|^2=\sum_{1\leq j<k\leq n}\left|\frac{\partial u_k}{\partial \overline{z}_j}-\frac{\partial u_j}{\partial \overline{z}_k}\right|^2.$$
It is easy to see that for $v\in C^2(\mathbb{C}^n)$, $\overline{\partial}v$ is $\overline{\partial}$-closed, i.e., $\overline{\partial}^2v=0.$
	\begin{lemma}\label{3p2}
		Let $r>0$ and $z\in \mathbb{C}^n\backslash \{0\}$. Then there exists a real-valued function $\gamma_z \in C^\infty(\mathbb{C}^n)$, satisfying the following properties.\\
\textup{(A)}\,$0\leq \gamma_z \leq 1$, $\gamma_z|_{D(z,r/2)}\equiv 1$, $\textup{supp}\, \gamma_z \subseteq D(z,r)$.\\
\textup{(B)}\,$|\nabla \gamma_z(w)|\leq C_1[\Phi'(|z|^2)]^{\frac{1}{2}}/r$.\\
\textup{(C)}\,$\left|\bar{\partial}\gamma_z(w)\land \overline{\partial}|w| \right|\leq C_2[\Psi'(|z|^2)]^{\frac{1}{2}}/r$, where $C_1$ and $C_2$ are absolute constants.
	\end{lemma}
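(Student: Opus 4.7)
The plan is to construct $\gamma_z$ as a tensor product of two smooth one-dimensional bumps adapted to the anisotropic shape of $D(z,r)$. Fix a non-increasing $\phi\in C^\infty(\mathbb{R})$ with $0\le\phi\le 1$, $\phi\equiv 1$ on $(-\infty,1/4]$ and $\phi\equiv 0$ on $[1,\infty)$, and set
$$\gamma_z(w):=\phi\!\left(\frac{|z-P_z w|^2\,\Phi'(|z|^2)}{r^2}\right)\phi\!\left(\frac{|w-P_z w|^2\,\Psi'(|z|^2)}{r^2}\right).$$
Since $|z-P_z w|^2$ and $|w-P_z w|^2$ are smooth polynomials in $(w,\bar w)$, we have $\gamma_z\in C^\infty(\mathbb{C}^n)$. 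Property (A) follows immediately from the definition of $D(z,r)$ together with the plateau/support structure of $\phi$. For (B), the chain rule and the elementary bounds $\bigl|\nabla(|z-P_z w|^2)\bigr|\lesssim |z-P_z w|$ and $\bigl|\nabla(|w-P_z w|^2)\bigr|\lesssim |w-P_z w|$ yield on the support
$$|\nabla\gamma_z(w)|\lesssim \frac{\Phi'(|z|^2)}{r^2}\,|z-P_z w|+\frac{\Psi'(|z|^2)}{r^2}\,|w-P_z w|\lesssim \frac{[\Phi'(|z|^2)]^{1/2}}{r},$$
using $\Phi'\ge\Psi'$.

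The real obstacle is (C), where one has to improve $[\Phi']^{1/2}/r$ to $[\Psi']^{1/2}/r$. When $n=1$ the bound is vacuous since every $(0,2)$-form on $\mathbb{C}$ vanishes, so assume $n\ge 2$ and perform a unitary change of coordinates to arrange $z=(|z|,0,\ldots,0)$; both the construction and the quantity to bound are unitarily invariant. Abbreviating $A_1:=||z|-w_1|^2\Phi'/r^2$ and $A_2:=(|w_2|^2+\cdots+|w_n|^2)\Psi'/r^2$, one computes
$$\overline{\partial}A_1=-(|z|-w_1)(\Phi'/r^2)\,d\overline{w}_1,\qquad \overline{\partial}A_2=(\Psi'/r^2)\sum_{j=2}^n w_j\,d\overline{w}_j,$$
so $\overline{\partial}A_1$ sits purely in the radial slot $d\overline{w}_1$ and $\overline{\partial}A_2$ purely in the tangential slots $d\overline{w}_2,\ldots,d\overline{w}_n$, while $\overline{\partial}|w|=(2|w|)^{-1}\sum_j w_j\,d\overline{w}_j$ is dominated by its radial piece $(w_1/(2|w|))\,d\overline{w}_1$.

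Because $d\overline{w}_1\wedge d\overline{w}_1=0$, this forces a cancellation in $\overline{\partial}\gamma_z\wedge\overline{\partial}|w|$: the $\overline{\partial}A_1$ contribution pairs only with the tangential components of $\overline{\partial}|w|$, and the $\overline{\partial}A_2$ contribution only with its radial component. Using $|w_1|\le|w|$ together with $|w-P_z w|\le r[\Psi']^{-1/2}$ and $|z-P_z w|\le r[\Phi']^{-1/2}$ on the support, the $\overline{\partial}A_2$ cross-term is bounded by
$$\frac{|w_1|\,\Psi'}{r^2|w|}\cdot r[\Psi']^{-1/2}\lesssim \frac{[\Psi']^{1/2}}{r},$$
whereas the $\overline{\partial}A_1$ cross-term contributes at order $[\Phi']^{1/2}/(|z|[\Psi']^{1/2})$. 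This last quantity is the crux, and I expect to absorb it into $[\Psi']^{1/2}/r$ by invoking \eqref{cdt3}: that hypothesis gives $\Phi'(|z|^2)=\Psi'+|z|^2\Psi''\lesssim |z|\,[\Psi'(|z|^2)]^{1+\eta}$ for $|z|$ large, so
$$\frac{[\Phi'(|z|^2)]^{1/2}}{|z|\,[\Psi'(|z|^2)]^{1/2}}\lesssim \frac{[\Psi'(|z|^2)]^{(\eta-1)/2}}{|z|^{1/2}},$$
which is $o(1)$ as $|z|\to\infty$ because $\Psi'$ is non-decreasing and strictly positive by \eqref{cdt1} and $(\eta-1)/2<0$. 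The residual small-$|z|$ regime is closed by a compactness argument based on the continuity and positivity of $\Phi'$ and $\Psi'$, producing the absolute constant $C_2$.
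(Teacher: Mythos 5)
You follow essentially the same route as the paper: a product of one-variable cutoffs adapted to the two scales of $D(z,r)$, reduction to $z=(|z|,0,\dots,0)$ by unitary invariance, and the wedge cancellation that leaves only the radial--tangential cross terms; (A), (B) and the $\overline\partial A_2$ cross term are handled correctly. The gap is in the step you yourself call the crux. From \eqref{cdt3} one only gets $\Phi'(|z|^2)\lesssim \Psi'(|z|^2)+|z|\,\Psi'(|z|^2)^{1+\eta}$ for large $|z|$, hence
\[
\frac{[\Phi'(|z|^2)]^{1/2}}{|z|\,[\Psi'(|z|^2)]^{1/2}}\ \lesssim\ \frac{1}{|z|}+\frac{[\Psi'(|z|^2)]^{\eta/2}}{|z|^{1/2}},
\]
with exponent $\eta/2$, not $(\eta-1)/2$: your displayed estimate loses a factor $[\Psi'(|z|^2)]^{1/2}$ and is false in general. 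More seriously, the conclusion you draw from it --- that the $\overline\partial A_1$ cross term is $o(1)$ as $|z|\to\infty$ --- does not follow from \eqref{cdt1}--\eqref{cdt3}. Indeed $\Phi'(|z|^2)/\bigl(|z|^2\Psi'(|z|^2)\bigr)=|z|^{-2}+\Psi''(|z|^2)/\Psi'(|z|^2)$, so the cross term is unbounded whenever $\Psi''/\Psi'$ is; for instance $\Psi'(x)=e^{x^2}$ satisfies \eqref{cdt1}--\eqref{cdt3} for every $\eta>0$ and makes the cross term of size $|z|$.

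The repair is short and is exactly what the paper does in \eqref{3e5}--\eqref{3e6}: one does not show the cross term is small, one dominates it by the (possibly large) right-hand side. Since $\eta<1/2$ and $\Psi'\geq\Psi'(0)>0$, condition \eqref{cdt3} gives $\Phi'(|z|^2)\lesssim|z|^2\Psi'(|z|^2)^2$ for $|z|$ bounded below, whence
\[
\frac{[\Phi'(|z|^2)]^{1/2}}{|z|\,[\Psi'(|z|^2)]^{1/2}}\ \lesssim\ [\Psi'(|z|^2)]^{1/2}\ \lesssim\ \frac{[\Psi'(|z|^2)]^{1/2}}{r}
\]
(for $r\lesssim 1$, which is how the lemma is used). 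The remaining range $|z|\lesssim 1$ is indeed closed as you indicate, by the trivial bound $\left|\overline\partial\gamma_z\wedge\overline\partial|w|\right|\lesssim|\nabla\gamma_z|\lesssim[\Phi'(|z|^2)]^{1/2}/r$ together with the boundedness of $\Phi'$ and positivity of $\Psi'$ on compact sets; note that your use of $|w|\simeq|z|$ in the cross term also needs $|z|$ bounded below, so it belongs to the same case split. With this substitution your argument becomes a correct proof along the same lines as the paper's.
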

	\begin{proof}
	Choose a function $T\in C^\infty([0,+\infty))$ satisfying $T|_{[0,\frac{1}{2}]}\equiv 1$, $\textup{supp}\,T\subseteq [0,1]$ and $-3\leq T'(x)\leq 0$. Given $z\in\mathbb{C}^n\backslash \{0\}$, we define the function $\gamma_z$ as
$$\gamma_z(w):=T\left(\frac{|P_zw-z|}{r[\Phi '(|z|^2)]^{-\frac{1}{2}}}\right)\cdot T\left(\frac{|w-P_zw|}{r[\Psi '(|z|^2)]^{-\frac{1}{2}}}\right).$$
It is easy to see that $0\leq \gamma_z(w)\leq 1$, $\gamma_z\in C^\infty(\mathbb{C}^n)$, $\gamma_z|_{D(z,r/2)}\equiv 1$ and $\textup{supp}\, \gamma_z \subseteq D(z,r)$. Thus we have proved (A). To prove (B) and (C), we begin with considering the case $z=(z_1,0,\cdots,0)$. The function $\gamma_z$ is reduced to
$$\gamma_z(w)=T\left(\frac{|w_1-z_1|}{r[\Phi '(|z|^2)]^{-\frac{1}{2}}}\right)\cdot T\left(\frac{|(0,w_2,\cdots,w_n)|}{r[\Psi '(|z|^2)]^{-\frac{1}{2}}}\right).$$
It follows that
\begin{align*}
  \left|\frac{\partial \gamma_z}{\partial w_1}(w)\right|&=\left|T'\left(\frac{|w_1-z_1|}{r[\Phi ' (|z|^2)]^{-\frac{1}{2}}}\right)\cdot \frac{1}{2r[\Phi ' (|z|^2)]^{-\frac{1}{2}}}\right|\cdot T\left(\frac{|(0,w_2,\cdots,w_n)|}{r[\Psi '(|z|^2)]^{-\frac{1}{2}}}\right)
  \leq \frac{3\Phi' (|z|^2)^{\frac{1}{2}}}{2r},
\end{align*}
and
\begin{align*}
  \left|\frac{\partial \gamma_z}{\partial w_j}(w)\right|&=T\left(\frac{|w_1-z_1|}{r[\Phi '(|z|^2)]^{-\frac{1}{2}}}\right)\cdot \left|T'\left(\frac{|(0,w_2,\cdots,w_n)|}{r[\Psi ' (|z|^2)]^{-\frac{1}{2}}}\right)\cdot \frac{\overline{w}_j}{2r|(0,w_2,\cdots,w_n)|[\Psi ' (|z|^2)]^{-\frac{1}{2}}}\right|\\
  &\leq \frac{3\Psi ' (|z|^2)^{\frac{1}{2}}}{2r}\leq \frac{3\Phi ' (|z|^2)^{\frac{1}{2}}}{2r},
\end{align*}
for $j\neq 1$. Similarly,
\begin{align}\label{3e3}
  \left|\frac{\partial \gamma_z}{\partial \overline{w}_1}(w)\right|\leq \frac{3\Phi' (|z|^2)^{\frac{1}{2}}}{2r}, \textup{ and } \left|\frac{\partial \gamma_z}{\partial \overline{w}_j}(w)\right|\leq \frac{3\Psi ' (|z|^2)^{\frac{1}{2}}}{2r}\leq \frac{3\Phi ' (|z|^2)^{\frac{1}{2}}}{2r}\textup{ for }j\neq 1.
\end{align}
Thus, (B) holds with $z=(z_1,0,\cdots,0)$. Notice that
$\overline{\partial}|w|=\frac{1}{2|w|}\sum_{j=1}^{n}w_jd\overline{w}_j.$
We have
\begin{align*}
  \overline{\partial}\gamma_z(w)\land \overline{\partial}|w|
  =\sum_{1\leq j<k\leq n}\frac{1}{2|w|}\left(\frac{\partial\gamma_z}{\partial\overline{w}_j}(w)w_k-
  \frac{\partial\gamma_z}{\partial\overline{w}_k}(w)w_j\right)d\overline{w}_j\land d\overline{w}_k.
\end{align*}
Therefore,
\begin{align}\label{3e4}
  \left|\overline{\partial}\gamma_z(w)\land \overline{\partial}|w|\right|^2=\sum_{1\leq j<k\leq n}\frac{1}{4|w|^2}\left|\frac{\partial\gamma_z}{\partial\overline{w}_j}(w)w_k-
  \frac{\partial\gamma_z}{\partial\overline{w}_k}(w)w_j\right|^2.
\end{align}
By the fact that $\Phi '(|z|^2)=|z|^2\Psi ''(|z|^2)+\Psi '(|z|^2)$ and inequalities \eqref{cdt1} and \eqref{cdt3}, we deduce that
\begin{align}\label{3e5}
\Phi '(|z|^2)\lesssim |z|^2\Psi '(|z|^2)^{1+\eta}+\Psi '(|z|^2)\lesssim |z|^2\Psi ' (|z|^2)^2.
\end{align}
Obeserve that, if $w\in D((z_1,0,\cdots,0),r)$ and $j\neq 1$, then
$$|w|\geq |w_1|\simeq |z|\textup{ and }|w_j|\lesssim \frac{r}{[\Psi '(|z|^2)]^{\frac{1}{2}}},$$
which, together with \eqref{3e3} and \eqref{3e5}, implies that, for $1<k\leq n$ and $w\in D((z_1,0,\cdots,0),r)$,
\begin{align}\label{3e6}
\frac{1}{|w|}\left|\frac{\partial\gamma_z}{\partial\overline{w}_1}(w)w_k-
  \frac{\partial\gamma_z}{\partial\overline{w}_k}(w)w_1\right|\lesssim \frac{[\Phi '(|z|^2)]^{\frac{1}{2}}}{|z|[\Psi '(|z|^2)]^{\frac{1}{2}}}+
  \frac{[\Psi '(|z|^2)]^{\frac{1}{2}}}{r}\lesssim  \frac{[\Psi '(|z|^2)]^{\frac{1}{2}}}{r}.
\end{align}
For $2\leq j<k\leq n$ and $w\in D((z_1,0,\cdots,0),r)$, by \eqref{3e3}, we have
\begin{align}\label{3e7}
\frac{1}{|w|}\left|\frac{\partial\gamma_z}{\partial\overline{w}_j}(w)w_k-
  \frac{\partial\gamma_z}{\partial\overline{w}_k}(w)w_j\right|\leq \left| \frac{\partial\gamma_z}{\partial\overline{w}_j}(w)\right|+
  \left| \frac{\partial\gamma_z}{\partial\overline{w}_k}(w)\right| \lesssim  \frac{[\Psi '(|z|^2)]^{\frac{1}{2}}}{r}.
\end{align}
Notice that $\gamma_z|_{\mathbb{C}^n\backslash D(z,r)}\equiv 0$. This, together with \eqref{3e4}, \eqref{3e6} and \eqref{3e7}, implies that (C) holds with $z=(z_1,0,\cdots,0)$. We will apply the complex tangent space and unitary transformation to reach the general case. To this end, let $w\in \mathbb{C}^n\backslash \{0\}$ and denote by $T_\mathbb{C}(w)$ the complex tangent space of $\mathbb{C}^n$ at the point $w$, namely,
$$T_\mathbb{C}(w)=\left\{\xi\in\mathbb{C}^n:\langle\xi,w\rangle=0\right\}.$$
Suppose $z=(|z|,0,\cdots,0)\in \mathbb{C}^n\backslash\{0\}$. Notice that $|w|\simeq|w_1|\simeq |z|\neq 0$ when $w=(w_1,w_2,\cdots,w_n)\in D(z,r).$ It is easy to see that $\{(\overline{w}_k,0,\cdots,-\overline{w}_1,0,\cdots,0):2\leq k\leq n \}$(the first component is $\overline{w}_k$ and the $k$-th component is $-\overline{w}_1$) are $n-1$ linearly independent vectors which form a basis of $T_{\mathbb{C}}(w).$ Thus, for each $\xi \in T_{\mathbb{C}}(w)$ with $|\xi|\leq 1$, there are $\alpha_2,\cdots,\alpha_n$ with $|\alpha_k|\lesssim 1$ for $2\leq k\leq n$ such that
$$\xi=\sum_{2\leq k\leq n}\frac{\alpha_k}{|w|}(\overline{w}_k,0,\cdots,-\overline{w}_1,0,\cdots,0).$$
Indeed, we rewrite $\xi$ as
$$\xi
=\left(\sum_{2\leq k\leq n}\frac{\alpha_k\overline{w}_k}{|w|},-\frac{\alpha_2\overline{w}_1}{|w|},\cdots,-\frac{\alpha_n\overline{w}_1}{|w|}\right).$$
Thus,
$|\alpha_k|\cdot |\overline{w}_1|/|w|\leq |\xi|\leq 1$
for $2\leq k\leq n$. Since $|w_1|\simeq |w|$, we have $\alpha_k\lesssim 1$ for $2\leq k\leq n$. Denote by $\nabla_w \gamma_z$ the complex gradient of $\gamma_z$, that is,
$$\nabla_w\gamma_z=\left(\frac{\partial \gamma_z}{\partial w_1},\cdots,\frac{\partial \gamma_z}{\partial w_n}\right).$$
Noting that $\gamma_z$ is real-valued, we also have
$$\overline{\nabla_w\gamma_z}=\left(\frac{\partial \gamma_z}{\partial \overline{w}_1},\cdots,\frac{\partial \gamma_z}{\partial \overline{w}_n}\right).$$
Combining \eqref{3e6} with the fact
$$\langle \xi,\overline{\nabla_w\gamma_z}\rangle=\sum_{2\leq k\leq n}\frac{\alpha_k}{|w|}\langle (\overline{w}_k,0,\cdots,-\overline{w}_1,0,\cdots,0), \overline{\nabla_w\gamma_z} \rangle=\sum_{2\leq k\leq n}
\frac{\alpha_k}{|w|}\left(\frac{\partial\gamma_z}{\partial w_1}(w)\overline{w}_k-
  \frac{\partial\gamma_z}{\partial w_k}(w)\overline{w}_1\right),$$
we see that, for $z=(|z|,0\cdots,0)$,
\begin{align}\label{3e8}
  \sup_{\xi\in T_{\mathbb{C}}(w),|\xi|\leq 1}\left| \langle \xi,\overline{\nabla_w\gamma_z}\rangle \right|\leq \frac{C[\Psi '(|z|^2)]^{1/2}}{r}.
\end{align}
For $z\in \mathbb{C}^n\backslash \{0\}$, let $U_z$ be the unitary transformation on $\mathbb{C}^n$ such that $U_z(z)=(|z|,0,\cdots,0).$ Denote by $w'=U_z(w)$ for $w \in \mathbb{C}^n.$ It is easy to see that $D(z,r)=U_z(D(z',r)),T_\mathbb{C}(w)=U_z(T_\mathbb{C}(w')).$ For $1\leq j <k\leq n$, set
$$\xi_{jk}:=\frac{1}{|w|}(0,\cdots,0,\overline{w}_k,0,\cdots,0,-\overline{w}_j,0\cdots,0)\in T_{\mathbb{C}}(w),$$
where the $j$-th component is $\overline{w}_k/|w|$ and the $k$-th component is $-\overline{w}_j/|w|$. Clearly $|\xi_{jk}|\leq 1$. Moreover, there holds
$$\frac{1}{|w|}\left( w_k\frac{\partial \gamma_z}{\partial \overline{w}_j}(w)-w_j\frac{\partial \gamma_z}{\partial \overline{w}_k}(w)\right)=\langle \overline{\nabla\gamma_z(w)},\xi_{j,k} \rangle, $$
which, together with \eqref{3e8}, further implies that
\begin{align*}
  \frac{1}{|w|}\left| w_k\frac{\partial \gamma_z}{\partial \overline{w}_j}(w)-w_j\frac{\partial \gamma_z}{\partial \overline{w}_k}(w)\right|&\leq \sup_{\xi\in T_{\mathbb{C}}(w),|\xi|\leq 1}\left| \langle \xi,\overline{\nabla_{w}\gamma_z}\rangle \right|
  =\sup_{\xi\in T_{\mathbb{C}}(w'),|\xi|\leq 1}\left| \langle \xi,\overline{\nabla_{w}\gamma_{z'}(w')}\rangle \right|\\
  &\lesssim \frac{[\Psi '(|z'|^2)]^{1/2}}{r}=\frac{[\Psi '(|z|^2)]^{1/2}}{r}.
\end{align*}
Combining this with \eqref{3e4}, we conlcude that (C) holds for all $z\in \mathbb{C}^n\backslash \{0\}.$ The proof is completed.
	\end{proof}
Let $\mathcal{A}$ be a subset of integer numbers. Denote by $\textup{card\,} \mathcal{A}$ the cardinal number of $\mathcal{A}$.
\begin{lemma}\label{3p3}
Let $r$, $m$, $a$ and $b$ be positive constants. Suppose $\{z_k\}_{k=1}^\infty$ is a sequence in $\mathbb{C}^n$ such that $D(z_j,r/m)\cap D(z_k,r/m)=\emptyset$ for $j\neq k$. Then there exists a positive integer $N$ such that for every $z\in \mathbb{C}^n$,
\begin{align}\label{3e9}
  \textup{card}\{k:D(z,ar)\cap D(z_k,br)\neq \emptyset\}\leq N.
\end{align}
\end{lemma}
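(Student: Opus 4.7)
The plan is to reduce the statement to a standard volume-counting argument in Bergman balls, using the two-sided comparison \eqref{2e2} between the $D$-balls and Bergman balls, and then invoking Lemma \ref{3p1} (possibly iterated) to control the volumes.

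First I would fix the parameters $r,m,a,b$ and use \eqref{2e2} to choose positive constants $\rho_1,\rho_2,\rho_3$, depending only on $r,m,a,b$, such that for every $w\in\mathbb{C}^n$
\[
B(w,\rho_1)\subseteq D(w,r/m),\qquad D(w,ar)\subseteq B(w,\rho_2),\qquad D(w,br)\subseteq B(w,\rho_3).
\]
The disjointness hypothesis $D(z_j,r/m)\cap D(z_k,r/m)=\emptyset$ then implies that the smaller Bergman balls $\{B(z_k,\rho_1)\}_{k}$ are pairwise disjoint. Next, fix $z\in\mathbb{C}^n$ and let $\mathcal{A}=\{k:D(z,ar)\cap D(z_k,br)\neq\emptyset\}$. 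For each $k\in\mathcal{A}$, any point $w$ in the intersection lies in $B(z,\rho_2)\cap B(z_k,\rho_3)$, so the triangle inequality for $\varrho$ gives $\varrho(z,z_k)<\rho_2+\rho_3$, and consequently $B(z_k,\rho_1)\subseteq B(z,R)$ with $R:=\rho_1+\rho_2+\rho_3$.

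Combining disjointness and inclusion yields
\[
\sum_{k\in\mathcal{A}}|B(z_k,\rho_1)|\;\leq\;|B(z,R)|.
\]
By \eqref{2e3}, both $|B(z_k,\rho_1)|$ and $|B(z,R)|$ are comparable to $[\Phi'(|\cdot|^2)]^{-1}[\Psi'(|\cdot|^2)]^{-(n-1)}$ with constants depending only on the radius. So once we know $\Phi'(|z_k|^2)\simeq \Phi'(|z|^2)$ and $\Psi'(|z_k|^2)\simeq \Psi'(|z|^2)$ uniformly in $z$ and in $k\in\mathcal{A}$, dividing gives $\mathrm{card}(\mathcal{A})\leq N$ for some $N$ independent of $z$.

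The main obstacle is precisely this uniform comparability of $\Phi'$ and $\Psi'$ between $z$ and $z_k$, since Lemma \ref{3p1} is only stated for $D(z,r)$ with $r\leq 1/8$, while here $\varrho(z,z_k)$ may be as large as $\rho_2+\rho_3$. I would handle this by chaining: use \eqref{2e2} to pick $r_0\leq 1/8$ with $B(w,\varepsilon)\subseteq D(w,r_0/2)$ for some small $\varepsilon=\varepsilon(r_0)>0$, then subdivide a Bergman geodesic from $z$ to $z_k$ into at most $\lceil(\rho_2+\rho_3)/\varepsilon\rceil$ segments and apply Lemma \ref{3p1} on each segment to $\Phi'$ and $\Psi'$. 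The number of segments depends only on $r,m,a,b$, so the resulting multiplicative constants in $\Phi'(|z_k|^2)\simeq\Phi'(|z|^2)$ and $\Psi'(|z_k|^2)\simeq\Psi'(|z|^2)$ are uniform in $z,z_k$, which closes the argument and produces the required bound $N$.
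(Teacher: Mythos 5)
Your proposal is correct and is essentially the paper's own argument: pack the pairwise disjoint small balls around the relevant $z_k$ into a single ball about $z$ of uniformly bounded radius (via \eqref{2e2} and the triangle inequality) and compare volumes to bound the cardinality. The only difference is in how the volume comparability is justified: you chain Lemma \ref{3p1} along a near-geodesic to compare $\Phi'(|z|^2)$ with $\Phi'(|z_k|^2)$ and $\Psi'(|z|^2)$ with $\Psi'(|z_k|^2)$ separately, whereas the paper obtains the needed estimate $|D(z,r)|\simeq|D(z_k,r/m)|$ more directly from \eqref{2e2}--\eqref{2e3} (the mutual inclusions $B(z,\rho_1)\subseteq B(z_k,\rho+\rho_1)$ and $B(z_k,\rho_1)\subseteq B(z,\rho+\rho_1)$ plus \eqref{2e3} at two fixed radii already give comparability of the products $[\Phi'(|\cdot|^2)]^{-1}[\Psi'(|\cdot|^2)]^{-(n-1)}$), so your chaining step, while valid, is not needed.
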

\begin{proof}
  By the relation \eqref{2e2} and the triangle inequality, we konw that if $D(z,ar)\cap D(z_k,br)\neq \emptyset$, then there is a constant $r_1$(independent of $z$ as well as $\{z_k\}_{k=1}^\infty$) such that $D(z_k,r/m)\subseteq B(z,r_1).$ Since $\{D(z_k,r/m)\}_{k=1}^\infty$ are pair-wisely disjoint, there holds
  \begin{align}\label{3e10}
    \sum_{\{k:D(z,ar)\cap D(z_k,br)\neq \emptyset\}}|D(z_k,r/m)|\leq |B(z,r_1)|\simeq |D(z,r)|.
  \end{align}
  On the other hand, By the relation \eqref{2e2} and the triangle inequality again, we see that if $D(z,ar)\cap D(z_k,br)\neq \emptyset$, then
  $|D(z,r)|\simeq |D(z_k,r/m)|$, which implies that
  \begin{align}\label{3e11}
    \sum_{\{k:D(z,ar)\cap D(z_k,br)\neq \emptyset\}}|D(z_k,r/m)|\simeq
  \sum_{\{k:D(z,ar)\cap D(z_k,br)\neq \emptyset\}}|D(z,r)|.
  \end{align}
  From \eqref{3e10} and \eqref{3e11}, there exists an integer $N>0$ satisfying
  $\textup{card}\{k:D(z,ar)\cap D(z_k,br)\neq \emptyset\}\leq N$
for $z\in \mathbb{C}^n$.
The proof is completed.
\end{proof}
We immediately obtain the following lemma.
\begin{lemma}\label{3p4}
Let $r$, $m$ and $b$ be positive constants. Suppose $\{z_k\}_{k=1}^\infty$ is a sequence in $\mathbb{C}^n$ such that $D(z_j,r/m)\cap D(z_k,r/m)=\emptyset$ for $j\neq k$. Then there exists a positive integer $N$ such that for every $z\in \mathbb{C}^n$,
\begin{align}\label{3e12}
  \sum_{k=1}^{\infty}\chi_{D(z_k,br)}(z)\leq N.
\end{align}
\end{lemma}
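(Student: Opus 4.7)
The plan is to derive Lemma \ref{3p4} as an essentially immediate corollary of Lemma \ref{3p3}. The key observation is that the quantity $\sum_{k=1}^{\infty}\chi_{D(z_k,br)}(z)$ is nothing but the cardinality of the index set $\{k : z\in D(z_k,br)\}$, so the whole task reduces to bounding this index set uniformly in $z$.

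First, I would fix $z\in\mathbb{C}^n$ and choose any convenient auxiliary constant, say $a=b$ (or indeed any $a>0$). Then I would point out the trivial inclusion $z\in D(z,ar)$, which is valid for every positive $a$ since $D(z,ar)$ is a neighborhood of its center $z$. Consequently, whenever the indicator $\chi_{D(z_k,br)}(z)$ equals $1$, we have
\begin{equation*}
  z\in D(z,ar)\cap D(z_k,br),
\end{equation*}
so in particular $D(z,ar)\cap D(z_k,br)\neq \emptyset$. Therefore
\begin{equation*}
  \{k:z\in D(z_k,br)\}\subseteq \{k:D(z,ar)\cap D(z_k,br)\neq\emptyset\}.
\end{equation*}

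Taking cardinalities and invoking Lemma \ref{3p3} with the parameters $a$ (chosen above), $b$, $r$, and $m$, we obtain a positive integer $N$, depending only on $r,m,a,b$ and not on $z$ or on the sequence $\{z_k\}_{k=1}^{\infty}$, such that
\begin{equation*}
  \sum_{k=1}^{\infty}\chi_{D(z_k,br)}(z)=\operatorname{card}\{k:z\in D(z_k,br)\}\le \operatorname{card}\{k:D(z,ar)\cap D(z_k,br)\neq\emptyset\}\le N,
\end{equation*}
which is the desired inequality \eqref{3e12}. There is no genuine obstacle here: the only subtlety is recognizing that the pointwise statement of Lemma \ref{3p4} is a special case of the intersection statement of Lemma \ref{3p3}, obtained by using the center $z$ itself as a witness for the non-empty intersection. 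No new estimates on $\Psi$, $\Phi$, or the Bergman metric are required beyond what was already invoked in the proof of Lemma \ref{3p3}.
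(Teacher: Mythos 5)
Your proposal is correct and matches the paper's intent: the paper derives Lemma \ref{3p4} as an immediate consequence of Lemma \ref{3p3} (it offers no separate proof), and your argument — using the center $z\in D(z,ar)$ as the witness that $D(z,ar)\cap D(z_k,br)\neq\emptyset$ whenever $z\in D(z_k,br)$ — is exactly the missing one-line deduction.
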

Given $0<r\leq 1/8$, let $\{z_k\}_{k=1}^\infty$ be a sequence in $\mathbb{C}^n$ such that $\cup_kD(z_k,r/2)=\mathbb{C}^n$ and $D(z_j,r/m)\cap D(z_k,r/m)=\emptyset$ for $j\neq k$ where $m>0$ is a constant. For each $z_k$, let $\gamma_{z_k}$ be the same function as in Lemma \ref{3p2}. Define
$\eta_k:=\gamma_{z_k}/(\sum_{j=1}^{\infty}\gamma_{z_j}).$
Then $\{\eta_k\}_{k=1}^\infty$ is a partition of unity subordinated to $\{D(z_k,r)\}$ satisfying
$0\leq \eta_k\leq 1, \sum_{k=1}^{\infty}\eta_k\equiv 1 \textup{ and } \text{supp }\eta_k\subseteq D(z_k,r).$
Applying Corollary \ref{3p4} with $b=1$, we know that the denominator of $\eta_k$ is always a sum of finite terms. By Lemma \ref{3p1}, we know that for $w\in D(z_j,r)$,
\begin{align}\label{3e13}
  \Psi '(|w|^2)\simeq \Psi '(|z_j|^2)\textup{ and } \Phi '(|w|^2)\simeq \Phi '(|z_j|^2).
\end{align}
This, together with Lemma \ref{3p2}, implies that for $w\in D(z_j,r)$,
\begin{align}\label{3e14}
  \left|\frac{\partial \gamma_{z_j}}{\partial w_i}(w)\right| \lesssim \Phi'(|w|^2)^{1/2}.
\end{align}
Notice that $0\leq \gamma_{z_j}\leq 1$ and $\sum \gamma_{z_j}\geq 1$. It follows from \eqref{3e14} and Lemma \ref{3p4} with $b=1$ that for $1\leq i \leq n$,
		\begin{align}\label{3e15}
		\left|\frac{\partial\eta_k}{\partial w_i}(w)\right|
&=\left|\frac{\frac{\partial \gamma_{z_k}}{\partial w_i}(w)}{\sum_{j=1}^{\infty}\gamma_{z_j}(w)}-\frac{\gamma_{z_k}(w)
\sum_{j=1}^{\infty}\frac{\partial \gamma_{z_j}}{\partial w_i}(w)}{\left[\sum_{j=1}^{\infty}\gamma_{z_j}(w)\right]^2}\right|
		\leq 2\sum_{\{j:w \in D(z_j,r)\}}\left|\frac{\partial \gamma_{z_j}}{\partial w_i}(w)\right|\notag\\
&\lesssim \Phi'(|w|^2)^{1/2}\sum_{j=1}^\infty \chi_{D(z_j,r)}(w)
\leq N\Phi'(|w|^2)^{1/2}.
		\end{align}
Similarly, for  $1\leq i \leq n$,
		\begin{align}\label{3e16}
		\left|\frac{\partial\eta_k}{\partial \overline{w}_i}(w)\right|\lesssim \Phi'(|w|^2)^{1/2}.
		\end{align}
This, together with \eqref{3e15} and \eqref{3e13}, further implies that
		$\left|\nabla \eta_k(w)\right|\lesssim \Phi'(|w|^2)^{1/2}\simeq \Phi'(|z_k|^2)^{1/2}$
for $w\in D(z_k,r)$. Notice that $\eta_k|_{\mathbb{C}^n\backslash D(z_k,r)}\equiv0$. Hence,
		$\left|\nabla \eta_k(w)\right|\lesssim  \Phi'(|z_k|^2)^{1/2}$
for every $w\in \mathbb{C}^n$. By the same method, we get
		$\left|\overline{\partial}\eta_k(w) \land \overline{\partial}|w|\right|\lesssim \Psi'(|z_k|^2)^{1/2}$
for $w\in \mathbb{C}^n$. From the argument above, we immediately obtain the following lemma.
	\begin{lemma}\label{3p5}
		Given $0<r\leq 1/8$, there exist a sequence $\{z_k\}_{k=1}^\infty$ and a partition of unity $\{\eta_k\}_{k=1}^\infty$ with the following property.\\
\textup{(A) supp} $\eta_k\subseteq D(z_k,r),0\leq \eta_k \leq 1,$ $\eta_k\in C^\infty(\mathbb{C}^n),$
		$\left|\nabla \eta_k(w)\right|\lesssim \Phi'(|z_k|^2)^{1/2}$ and $ \left|\overline{\partial}\eta_k(w) \land \overline{\partial}|w|\right|\lesssim \Psi'(|z_k|^2)^{1/2}$ for every $ w\in \mathbb{C}^n.$\\
\textup{(B)} $\cup_kD(z_k,r/2)=\mathbb{C}^n,D(z_j,r/m)\cap D(z_k,r/m)=\emptyset (j\neq k)$ where $m>0$ is a constant.
	\end{lemma}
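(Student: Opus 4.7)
The plan is to extract the lemma directly from the construction that has already been laid out in the discussion preceding the statement, wrapping it into a clean proof. First I would invoke the lattice/covering fact recorded at the end of Section \ref{s2}: given $0<r\leq 1/8$ there exists a sequence $\{z_k\}_{k=1}^\infty$ and a constant $m>0$ such that $\bigcup_k D(z_k,r/2)=\mathbb{C}^n$ and $D(z_j,r/m)\cap D(z_k,r/m)=\emptyset$ for $j\neq k$. This gives (B) at once and fixes the centers about which I will build the partition of unity.

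For each $z_k$ I would attach the smooth cutoff $\gamma_{z_k}$ produced by Lemma \ref{3p2} (supported in $D(z_k,r)$, equal to $1$ on $D(z_k,r/2)$, with derivatives controlled by $\Phi'(|z_k|^2)^{1/2}/r$ and the tangential component controlled by $\Psi'(|z_k|^2)^{1/2}/r$) and define
\[
\eta_k(w):=\frac{\gamma_{z_k}(w)}{S(w)},\qquad S(w):=\sum_{j=1}^{\infty}\gamma_{z_j}(w).
\]
Lemma \ref{3p4} (applied with $b=1$) ensures $S$ is a locally finite sum, hence $S\in C^\infty(\mathbb{C}^n)$; the covering property forces $S(w)\geq \gamma_{z_{k(w)}}(w)=1$ whenever $w\in D(z_{k(w)},r/2)$, so $S\geq 1$ everywhere. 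These observations give the qualitative half of (A): $0\leq \eta_k\leq 1$, $\eta_k\in C^\infty(\mathbb{C}^n)$, $\mathrm{supp}\,\eta_k\subseteq D(z_k,r)$, and $\sum_k\eta_k\equiv 1$.

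To verify the gradient bound I would differentiate via the quotient rule,
\[
\frac{\partial \eta_k}{\partial w_i}=\frac{1}{S}\frac{\partial \gamma_{z_k}}{\partial w_i}-\frac{\gamma_{z_k}}{S^2}\sum_j\frac{\partial \gamma_{z_j}}{\partial w_i},
\]
and use $S\geq 1$ together with $0\leq \gamma_{z_k}\leq 1$ to bound $|\partial \eta_k/\partial w_i|$ by twice the sum (over indices $j$ with $w\in D(z_j,r)$) of $|\partial \gamma_{z_j}/\partial w_i|$. Lemma \ref{3p2}(B) bounds each such term by $C\Phi'(|z_j|^2)^{1/2}$; Lemma \ref{3p1} then replaces $z_j$ by $w$ (since $w\in D(z_j,r)$), and Lemma \ref{3p4} caps the number of nonzero summands by the absolute constant $N$. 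Because $\eta_k$ vanishes outside $D(z_k,r)$, a final application of Lemma \ref{3p1} converts $\Phi'(|w|^2)^{1/2}$ into $\Phi'(|z_k|^2)^{1/2}$ on the support, yielding $|\nabla \eta_k(w)|\lesssim \Phi'(|z_k|^2)^{1/2}$ pointwise on $\mathbb{C}^n$. An identical computation, using Lemma \ref{3p2}(C) in place of (B), gives $|\bar{\partial}\eta_k\wedge\bar{\partial}|w||\lesssim \Psi'(|z_k|^2)^{1/2}$.

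The only subtlety is bookkeeping: one must separate the isotropic gradient, whose size is governed by the larger rate $\Phi'$, from the complex-tangential piece detected by wedging against $\bar\partial|w|$, whose size is governed by the smaller rate $\Psi'$. All the necessary estimates on $\gamma_{z_k}$ have already been proved with this anisotropy built in (Lemma \ref{3p2}), so the main work is to chase the quotient-rule terms through Lemmas \ref{3p1}, \ref{3p2}, and \ref{3p4} without losing the correct exponent in each direction. Once this is done, both inequalities in (A) follow.
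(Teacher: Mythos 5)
Your proof is correct and follows essentially the same route as the paper: the same lattice from Section \ref{s2}, the cutoffs $\gamma_{z_k}$ of Lemma \ref{3p2} normalized by their locally finite sum, and the quotient-rule estimate controlled via Lemma \ref{3p4} (finite overlap) and Lemma \ref{3p1} (comparability of $\Phi'$, $\Psi'$ on $D(z_k,r)$), with the wedge bound obtained by the identical computation using Lemma \ref{3p2}(C).
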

For $z \in \mathbb{C}^n$ and $f \in L^2(D(z,r),dV)$, we define the square mean of $|f|$ over $D(z,r)$ by setting
	$$M_r(f)(z)=\left(\frac{1}{|D(z,r)|}\int_{D(z,r)}|f(w)|^2dV(w)\right)^{1/2}.$$
The following lemma shows the relationship between $G_r(f)$ and $M_r(f)$.
	\begin{lemma}\label{3p6}
		For every $z \in \mathbb{C}^n$, $f \in L^2(D(z,r),dV)$ and $r>0$, there exists a function $h \in H(D(z,r))$ such that
		\begin{align}
			\label{3e17}
			M_r(f-h)(z)=G_r(f)(z).
		\end{align}
	\end{lemma}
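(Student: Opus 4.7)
The plan is to recognize the identity \eqref{3e17} as the statement that the infimum defining $G_r(f)(z)$ is attained, and to produce the minimizer via the orthogonal projection theorem in a Hilbert space. Set $A^2(D(z,r)) := H(D(z,r)) \cap L^2(D(z,r),dV)$, the Bergman space of the (interior of the) ball $D(z,r)$. Since $f \in L^2(D(z,r),dV)$, any $h \in H(D(z,r))$ with $h \notin L^2(D(z,r),dV)$ yields $M_r(f-h)(z) = \infty$, so the infimum in the definition of $G_r(f)(z)$ is unchanged if one replaces $H(D(z,r))$ by $A^2(D(z,r))$. Equivalently,
$$
|D(z,r)|^{1/2}\,G_r(f)(z) \;=\; \mathrm{dist}_{L^2(D(z,r),dV)}\bigl(f,\;A^2(D(z,r))\bigr).
$$

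Next I would check that $A^2(D(z,r))$ is a closed subspace of $L^2(D(z,r),dV)$. This follows from the standard sub-mean value property for holomorphic functions: for every point $w$ in the interior of $D(z,r)$ and any Euclidean ball $B(w,s)$ contained in that interior, $|h(w)|^2 \leq |B(w,s)|^{-1}\int_{B(w,s)}|h|^2\,dV$ for $h \in H(D(z,r))$. Hence point evaluation is a bounded linear functional at every interior point, so an $L^2$-Cauchy sequence in $A^2(D(z,r))$ converges locally uniformly on the interior, and the limit is again holomorphic. Thus $A^2(D(z,r))$ is a closed subspace of the Hilbert space $L^2(D(z,r),dV)$.

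Finally, applying the projection theorem, there exists a (unique) element $h \in A^2(D(z,r))$ realizing the distance from $f$ to $A^2(D(z,r))$. Combined with the identification in the first paragraph, this $h$ satisfies $M_r(f-h)(z) = G_r(f)(z)$, which is \eqref{3e17}. There is no substantive obstacle: the only care needed is to interpret $H(D(z,r))$ on the interior of the (closed) set $D(z,r)$ and to note that allowing $h$ to lie in $H(D(z,r)) \setminus L^2(D(z,r),dV)$ is harmless because such $h$ never achieve a finite value of $M_r(f-h)(z)$.
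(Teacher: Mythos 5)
Your proposal is correct and is essentially the same argument as the paper's, which simply cites Lemma 3.3 of \cite{HV}: the standard proof there is exactly your reduction to the closed subspace $A^2(D(z,r),dV)$ plus the Hilbert-space projection theorem. Note also that the minimizer you obtain is precisely the Bergman projection $D_{z,r}(f)$, consistent with Lemma \ref{ex3p6}.
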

\begin{proof}
  This lemma was essentially proved in Lemma 3.3 of \cite{HV}.
\end{proof}
For $z \in \mathbb{C}^n$ and $r>0$, let $A^2(D(z,r),dV):=L^2(D(z,r),dV)\cap H(D(z,r))$
	be the Bergman space over $D(z,r)$. Denote by $D_{z,r}$ the corresponding Bergman projection induced by the Bergman kernel of $A^2(D(z,r),dV)$. It is well-known that $D_{z,r}$ is bounded and $D_{z,r}h=h$ for $h \in A^2(D(z,r),dV)$.
	\begin{lemma}\label{ex3p6}
		For $z \in \mathbb{C}^n$, $f \in L^2(D(z,r),dV)$ and $r>0$, there holds
			$M_r(f-D_{z,r}(f))(z) \simeq G_r(f)(z).$
	\end{lemma}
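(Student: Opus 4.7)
The plan is to establish the equivalence by two one-sided inequalities, using only that $D_{z,r}$ is an orthogonal projection on $L^2(D(z,r),dV)$ together with Lemma \ref{3p6}.

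\textbf{The easy direction.} Since $D_{z,r}(f) \in A^2(D(z,r),dV) \subseteq H(D(z,r))$, it is an admissible competitor in the infimum defining $G_r(f)(z)$. Therefore
\[
G_r(f)(z) \;\leq\; M_r\bigl(f - D_{z,r}(f)\bigr)(z).
\]

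\textbf{The reverse direction.} By Lemma \ref{3p6}, there exists $h \in H(D(z,r))$ with $M_r(f-h)(z) = G_r(f)(z)$. Since $D_{z,r}$ is the orthogonal projection of $L^2(D(z,r),dV)$ onto the closed subspace $A^2(D(z,r),dV)$, its operator norm is at most $1$, and $D_{z,r}(h) = h$ whenever $h \in A^2(D(z,r),dV)$. The function $h$ produced by Lemma \ref{3p6} lies in $L^2(D(z,r),dV) \cap H(D(z,r)) = A^2(D(z,r),dV)$ (since $M_r(f-h)(z) < \infty$ and $f \in L^2(D(z,r),dV)$), so $D_{z,r}(h) = h$. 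Consequently
\[
f - D_{z,r}(f) \;=\; (f-h) \;-\; D_{z,r}(f-h),
\]
and the triangle inequality together with $\|D_{z,r}\| \leq 1$ gives
\[
\bigl\| f - D_{z,r}(f) \bigr\|_{L^2(D(z,r))}
\;\leq\; \|f-h\|_{L^2(D(z,r))} + \|D_{z,r}(f-h)\|_{L^2(D(z,r))}
\;\leq\; 2\,\|f-h\|_{L^2(D(z,r))}.
\]
Dividing by $|D(z,r)|^{1/2}$ yields
\[
M_r\bigl(f - D_{z,r}(f)\bigr)(z) \;\leq\; 2\,M_r(f-h)(z) \;=\; 2\,G_r(f)(z).
\]

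\textbf{Main obstacle.} The only subtle point is ensuring that the implied constant in $\simeq$ is genuinely independent of $z$ and $r$. This is where it matters that $D_{z,r}$ is an \emph{orthogonal} projection on $L^2(D(z,r),dV)$: its norm is bounded by $1$ regardless of the domain $D(z,r)$, so no control on $z$ or $r$ is needed. Combining the two inequalities completes the proof.
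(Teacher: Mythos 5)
Your proof is correct and is essentially the standard argument behind the result the paper invokes (it cites Lemma 3.2 of \cite{ZWH}, which rests on exactly this use of Lemma \ref{3p6} together with the contractivity of the orthogonal projection $D_{z,r}$ on $L^2(D(z,r),dV)$). In fact, since $f-D_{z,r}(f)=(I-D_{z,r})(f-h)$ and $I-D_{z,r}$ is also an orthogonal projection, you could even replace your constant $2$ by $1$, but this sharpening is not needed.
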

\begin{proof}
  See Lemma 3.2 of \cite{ZWH}.
\end{proof}
From \cite[p.191]{SY}, we know that there is a constant $t\geq 1$ such that for every $z\in \mathbb{C}^n$ and $w\in D(z,1)$,
    \begin{align}\label{3e18}
   t^{-1}\varrho(z,w)\leq |z-P_zw|[\Phi '(|z|^2)]^{1/2}+|w-P_zw|[\Psi '(|z|^2)]^{1/2}\leq t\varrho(z,w).
  \end{align}
\begin{lemma}\label{3p7}
		Let $f \in L^2_{\mathrm{loc}}(\mathbb{C}^n)$ and $t$ be the constant from \eqref{3e18}. Suppose $0<r\leq \frac{1}{16t^4}$. Then $f$ admits a decomposition $f=f_1+f_2$ satisfying the following property.\\
\textup{(A)} $f_1\in C^2(\mathbb{C}^n)$, for $z\in \mathbb{C}^n,$
$$
\frac{|\overline{\partial}f_1(z)|}{[\Phi'(|z|^2)]^{1/2}}
+\frac{\left|\overline{\partial}f_1(z)\land \overline{\partial}|z|\right|}{[\Psi'(|z|^2)]^{1/2}}
\lesssim G_{16t^4r}(f)(z),$$
and
$$M_r\left(\frac{|\overline{\partial}f_1(\cdot)|}{[\Phi'(|\cdot|^2)]^{1/2}}
+\frac{\left|\overline{\partial}f_1(\cdot)\land \overline{\partial}|\cdot|\right|}{[\Psi'(|\cdot|^2)]^{1/2}}\right)(z)
\lesssim G_{16t^4r}(f)(z).
$$
\textup{(B)} $f_2\in L^2_{\mathrm{loc}}(\mathbb{C}^n)$ and
$M_r(f_2)(z)\lesssim G_{16t^4r}(f)(z)$ for $ z\in \mathbb{C}^n.$
	\end{lemma}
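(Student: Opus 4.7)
The plan is to construct $f_1$ as a locally finite patchwork of holomorphic approximants to $f$, glued by the smooth partition of unity from Lemma~\ref{3p5}, so that $\overline{\partial}f_1$ is expressible purely in terms of the derivatives of the cutoffs acting on differences of nearby approximants. Fix $0 < r \leq 1/(16t^4)$ and let $\{z_k\}_{k=1}^\infty$ and $\{\eta_k\}_{k=1}^\infty$ be as in Lemma~\ref{3p5} with this radius. For each $k$, take $h_k := D_{z_k, 16t^4 r}(f) \in H(D(z_k, 16t^4 r))$, the Bergman projection of $f$; by Lemma~\ref{ex3p6},
\[
M_{16t^4 r}(f - h_k)(z_k) \simeq G_{16t^4 r}(f)(z_k).
\]
Define $f_1 := \sum_k \eta_k h_k$ and $f_2 := f - f_1 = \sum_k \eta_k (f - h_k)$. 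The sums are locally finite by Lemma~\ref{3p4}, each $\eta_k \in C^\infty(\mathbb{C}^n)$, and each $h_k$ is holomorphic on a neighborhood of $\textup{supp}\,\eta_k$, so $f_1 \in C^\infty(\mathbb{C}^n) \subseteq C^2(\mathbb{C}^n)$.

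For part (A), fix $z \in \mathbb{C}^n$, pick $j = j(z)$ with $z \in D(z_j, r/2)$, and set $J(z) := \{k : z \in D(z_k, r)\}$; by Lemma~\ref{3p4}, $|J(z)| \leq N$. Since each $h_k$ is holomorphic and $\sum_k \overline{\partial}\eta_k \equiv 0$,
\[
\overline{\partial} f_1(z) = \sum_{k \in J(z)} \overline{\partial}\eta_k(z)\bigl(h_k(z) - h_j(z)\bigr).
\]
The factor $16t^4$ is engineered so that, via \eqref{3e18}, \eqref{2e2}, and Lemma~\ref{3p1}, the balls $D(z_k, 16t^4 r)$ for $k \in J(z)$ and $D(z_j, 16t^4 r)$ all contain a common ball $D(z, cr)$ whose volume is comparable to $|D(z_k, 16t^4 r)|$ and to $|D(z, 16t^4 r)|$. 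Applying the Bergman-type subharmonic estimate of Lemma~\ref{2p3} to the holomorphic function $h_k - h_j$ on this common ball, and then the triangle inequality together with the defining property of $h_k$ and $h_j$, yields
\[
|h_k(z) - h_j(z)|^2 \lesssim G_{16t^4 r}(f)(z_k)^2 + G_{16t^4 r}(f)(z_j)^2 \lesssim G_{16t^4 r}(f)(z)^2,
\]
where the last step uses the local comparability of $G_{16t^4 r}(f)$ at nearby centers, itself a consequence of the ball comparabilities above. Combined with $|\overline{\partial}\eta_k(z)| \lesssim \Phi'(|z|^2)^{1/2}$ (Lemma~\ref{3p5} and Lemma~\ref{3p1}) and $|J(z)| \leq N$, this yields the pointwise bound on $|\overline{\partial}f_1(z)|/\Phi'(|z|^2)^{1/2}$. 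Wedging the above identity with $\overline{\partial}|z|$ and replacing the gradient bound by the sharper estimate $\left|\overline{\partial}\eta_k(z)\wedge \overline{\partial}|z|\right| \lesssim \Psi'(|z|^2)^{1/2}$ gives the analogous $\Psi'$-weighted wedge bound. The two $M_r$ bounds then follow by integrating the pointwise bounds over $D(z, r)$, using Lemma~\ref{3p1} to freeze $\Phi'$ and $\Psi'$ and the local comparability of $G_{16t^4 r}(f)$.

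Part (B) is more direct. Since $\sum_k \eta_k \equiv 1$, $f_2 = \sum_k \eta_k (f - h_k)$, and at most $N$ terms are nonzero on $D(z, r)$ by Lemma~\ref{3p4}. By Cauchy--Schwarz,
\[
M_r(f_2)(z)^2 \lesssim \sum_{k\,:\,D(z,r)\cap D(z_k, r) \neq \emptyset}\frac{1}{|D(z,r)|}\int_{D(z_k, r)} |f - h_k|^2\,dV,
\]
and each summand is $\lesssim M_{16t^4 r}(f - h_k)(z_k)^2 \simeq G_{16t^4 r}(f)(z_k)^2 \simeq G_{16t^4 r}(f)(z)^2$ by the same volume and center comparabilities. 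The hard part will be precisely this constant tracking: the factor $16t^4$ is exactly what is needed so that the auxiliary balls appearing in the subharmonic estimate for $h_k - h_j$ nest properly via \eqref{3e18} and \eqref{2e2}, and so that $G_{16t^4 r}(f)$ remains comparable when one shifts the center among $z$, $z_k$, and $z_j$; with a smaller constant the nesting fails, and with a larger constant one picks up a strictly coarser bound $G_{Cr}(f)$ on the right-hand side.
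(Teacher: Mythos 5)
Your construction follows the same architecture as the paper's proof (partition of unity from Lemma~\ref{3p5}, local holomorphic approximants as in Lemma~\ref{3p6}/Lemma~\ref{ex3p6}, writing $\overline{\partial}f_1$ as $\sum_k(h_k-h_j)\overline{\partial}\eta_k$, subharmonicity plus finite overlap for (A), Cauchy--Schwarz for (B)), but your specific choice $h_k=D_{z_k,16t^4r}(f)$ creates a genuine gap at the step ``$G_{16t^4r}(f)(z_k)\lesssim G_{16t^4r}(f)(z)$ by local comparability of $G_{16t^4r}(f)$ at nearby centers.'' There is no such equal-radius comparability: a near-optimal holomorphic competitor for the center $z$ lives only on $D(z,16t^4r)$, which neither contains nor is contained in $D(z_k,16t^4r)$, so it is not a competitor at $z_k$ and cannot be canonically extended to one. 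What the ball inclusions \eqref{3e18}, \eqref{2e2} and Lemma~\ref{3p1} actually give is $G_R(f)(z_k)\lesssim G_{R'}(f)(z)$ only for a strictly \emph{larger} radius $R'$ (of order $t^2(R+r)$), because one must enclose $D(z_k,R)$ in a ball centered at $z$. Consequently your argument proves the bounds of (A) and (B) with $G_{Cr}(f)(z)$ for some $C>16t^4$, not with $G_{16t^4r}(f)(z)$ as the lemma asserts; the same defect appears in your part (B), where you again shift the center at the top radius $16t^4r$.

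The paper sidesteps this by taking $h_k$ near-optimal at the \emph{small} radius, $M_r(f-h_k)(z_k)=G_r(f)(z_k)$, and spending the factor $16t^4$ on the subsequent center shifts: for $z\in D(z_k,r)$ one has $D(z_k,r)\subseteq B(z,4tr)\subseteq D(z,4t^2r)$, hence $G_r(f)(z_k)\lesssim G_{4t^2r}(f)(z)$, which gives the pointwise bound in (A) (and then $G_{4t^2r}\lesssim G_{16t^4r}$ by monotonicity of $G$ in the radius); a second enlargement, $D(w,4t^2r)\subseteq D(z,16t^4r)$ for $w\in D(z,r)$, yields the $M_r$ bound, and similarly $D(z_k,r)\subseteq D(z,16t^4r)$ whenever $D(z,r)\cap D(z_k,r)\neq\emptyset$ yields (B). In other words, $16t^4$ is precisely the room reserved for these two center shifts, which your top-radius choice of approximants consumes in advance. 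The repair is minimal: take $h_k\in H(D(z_k,r))$ achieving $G_r(f)(z_k)$; your subharmonicity, finite-overlap and Cauchy--Schwarz steps then go through essentially verbatim and produce exactly the stated $G_{16t^4r}$ bounds.
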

	\begin{proof}
Given $0<r\leq \frac{1}{16t^4}$, let $\{z_k\}_{k=1}^\infty$ be a sequence in $\mathbb{C}^n$ such that $\cup_kD(z_k,r/2)=\mathbb{C}^n$ and $D(z_j,r/m)\cap D(z_k,r/m)=\emptyset$ for $j\neq k$ where $m>0$ is a constant. By Lemma \ref{3p5}, there is a partition of unity $\{\eta_k\}^\infty_{k=1}$ subordinated to $\{D(z_k,r)\}_{k=1}^\infty$.
	Given $f \in L^2_{\mathrm{loc}}(\mathbb{C}^n)$, for $k=1,2,\dots$, pick $h_k \in H(D(z_k,r))$ as in Lemma \ref{3p6} such that
	\begin{align}\label{3e19}
		M_{r}(f-h_k)(z_k)=G_{r}(f)(z_k).
	\end{align}
	Set
		$f_1=\sum_{k=1}^{\infty}h_k\eta_k$ and $f_2=f-f_1.$
For each $z\in \mathbb{C}^n$, without loss of generality, we suppose $z\in D(z_1,r/2)$. Notice that the support of $\eta_k$ is contained in $D(z_k,r).$ By applying Lemma \ref{3p4} with $b=1$, we know that $f_1$ is well defined and is a sum of finite terms. Thus, we have
  $\overline{\partial}f_1(z)=\sum_{k=1}^{\infty}h_k(z)\overline{\partial}\eta_k(z).$
Since $\sum \eta_k\equiv 1$, there holds
  $\sum_{k=1}^{\infty}h_1(z)\overline{\partial}\eta_k(z)=
h_1(z)\overline{\partial}\left(\sum_{k=1}^{\infty}\eta_k\right)(z)=0.$
It follows 
that
$$\overline{\partial}f_1(z)=\sum_{k=1}^{\infty}[h_k(z)-h_1(z)]\overline{\partial}\eta_k(z).$$
By the subharmonicity of $|h_k-h_1|^2$ over $D(z_k,r)$, Lemma \ref{3p5} (A), Lemma \ref{3p1} and \eqref{3e19}, we obtain, for $1\leq j\leq n$,
		\begin{align}\label{3e22}
			\left|\frac{\partial f_1}{\partial w_j}(z)\right|
			&\leq \sum_{k=1}^\infty|h_k(z)-h_1(z)|\cdot\left|\frac{\partial \eta_k}{\partial w_j}(z)\right|\notag\\
			&\leq C\sum_{\{k:z \in D(z_k,r)\}}[\Phi'(|z_k|^2)]^{1/2}M_{r}(h_k-h_1)(z)\notag\\
			&\leq C[\Phi'(|z|^2)]^{1/2}\sum_{\{k:z \in D(z_k,r)\}}[M_{r}(f-h_k)(z)+M_{r}(f-h_1)(z)]\notag\\
&\leq C [\Phi'(|z|^2)]^{1/2}\sum_{\{k:z \in D(z_k,r)\}}G_{r}(f)(z_k).
		\end{align}
By \eqref{3e18} and the triangle inequality, it is easy to see that for $z\in D(z_k,r)$,
\begin{align}\label{ex3e22}
  D(z_k,r)\subseteq B(z,4tr)\subseteq D(z,4t^2r).
\end{align}
Therefore, we have $G_{r}(f)(z_k)\lesssim G_{4t^2r}(f)(z)$ when $z\in D(z_k,r)$. This, together with \eqref{3e22} and Lemma \ref{3p4} with $b=1$, further implies that
\begin{align}\label{3e23}
  \left|\frac{\partial f_1}{\partial w_j}(z)\right|
  \leq C [\Phi'(|z|^2)]^{1/2}\sum_{\{k:z \in D(z_k,r)\}}G_{4t^2r}(f)(z)\leq C [\Phi'(|z|^2)]^{1/2}G_{4t^2r}(f)(z).
\end{align}
Therefore,
\begin{align}\label{3e24}
|\overline{\partial}f_1(z)|[\Phi'(|z|^2)]^{-1/2}\lesssim G_{4t^2r}(f)(z)\textup{ for } z\in \mathbb{C}^n.
\end{align}
Similarly,
$$\frac{1}{|z|}\left|\frac{\partial f_1}{\partial\overline{z}_j}(z)z_l-
  \frac{\partial f_1}{\partial\overline{z}_l}(z)z_j\right|
  \leq \sum_{k=1}^\infty|h_k(z)-h_1(z)|\cdot\frac{1}{|z|}\left|\frac{\partial \eta_k}{\partial\overline{z}_j}(z)z_l-
  \frac{\partial \eta_k}{\partial\overline{z}_l}(z)z_j\right|,$$
  for $1\leq j,l\leq n.$ By the same argument as \eqref{3e23}, we have
 $$\frac{1}{|z|}\left|\frac{\partial f_1}{\partial\overline{z}_j}(z)z_l-
  \frac{\partial f_1}{\partial\overline{z}_l}(z)z_j\right|
  \lesssim  [\Psi'(|z|^2)]^{1/2}G_{4t^2r}(f)(z),$$
 for $1\leq j,l\leq n.$ It follows that
\begin{align}\label{3e25}
\left|\overline{\partial}f_1(z)\land \overline{\partial}|z|\right|[\Psi'(|z|^2)]^{-1/2}\lesssim G_{4t^2r}(f)(z)\textup{ for }  z\in \mathbb{C}^n.
\end{align}
By \eqref{3e18} and the triangle inequality again, we know that for $w\in D(z,r)$
$$ D(w,4t^2r)\subseteq B(z,16t^3r)\subseteq D(z,16t^4r).$$
This, together with \eqref{3e24} and \eqref{3e25}, further implies that
\begin{align}\label{3e26}
 M_r\left(|\overline{\partial}f_1(\cdot)|[\Phi'(|\cdot|^2)]^{-1/2}\right)(z)\lesssim G_{16t^4r}(f)(z),
\end{align}
and
\begin{align}\label{3e27}
 M_r\left(\left|\overline{\partial}f_1(\cdot)\land \overline{\partial}|\cdot|\right|[\Psi'(|\cdot|^2)]^{-1/2}\right)(z)\lesssim G_{16t^4r}(f)(z).
\end{align}
By \eqref{3e24}, \eqref{3e25}, \eqref{3e26}, \eqref{3e27} and the fact that $G_{4t^2r}(f)(z)\lesssim G_{16t^4r}(f)(z)$, we see that (A) holds. We are going to prove the assertion of $f_2$. By Cauthy-Schwarz inequality, we deduce that, for every $z\in \mathbb{C}^n$,
\begin{align*}
  |f_2(z)|^2&=\left|\sum_{k=1}^{\infty}[f(z)-h_k(z)]\eta_k(z)\right|^2
  \leq \left(\sum_{k=1}^{\infty}|f(z)-h_k(z)|^2\eta_k(z)\right)\left(\sum_{k=1}^{\infty}\eta_k(z)\right)\\
  &=\sum_{k=1}^{\infty}|f(z)-h_k(z)|^2\eta_k(z).
\end{align*}
It follows that
		\begin{align}\label{3e28}
			M_r(f_2)(z)^2 &\leq \frac{1}{|D(z,r)|}\int_{D(z,r)}\sum_{k=1}^\infty|f(w)-h_k(w)|^2\eta_k(w)dV(w)\notag\\
			&\leq C \sum_{k=1}^{\infty}\frac{1}{|D(z_k,r)|}\int_{D(z,r)\cap D(z_k,r)}|f(w)-h_k(w)|^2dV(w)\notag\\
			&\leq C \sum_{\{k:D(z,r)\cap D(z_k,r)\ne \emptyset\}}G_{r}(f)(z_k)^2.
		\end{align}
By \eqref{3e18}, we deduce that when $D(z,r)\cap D(z_k,r)\ne \emptyset$,
$$D(z,r)\subseteq B(z,2tr),D(z_k,r)\subseteq B(z_k,2tr) \textup{ and }B(z,2tr)\cap B(z_k,2tr)\ne \emptyset.$$
This, together with the triangle inequality and \eqref{3e18} again, implies that
$$ D(z_k,r)\subseteq B(z_k,2tr)\subseteq B(z,6tr)\subseteq D(z,6t^2r)\subseteq D(z,16t^4r).$$
It follows that $G_{r}(f)(z_k)\lesssim G_{16t^4r}(f)(z)$ when $D(z,r)\cap D(z_k,r)\ne \emptyset$. From this, \eqref{3e28} and Lemma \ref{3p4} with $a=b=1$, we obtain
  $M_r(f_2)(z) \leq C G_{16t^4r}(f)(z).$
The proof is finished.
	\end{proof}
Let $\varphi:\mathbb{C}^n\to \mathbb{R}$ be a plurisubharmonic function. For a (0,1) form $u=\sum_{j=1}^{n}u_jd\overline{z}_j$, we define
$$|u|^2_{i\partial \overline{\partial}\varphi}:=\sum_{1\leq j,k\leq n}a^{jk}u_j\overline{u}_k,$$
where $(a^{jk})_{n\times n}$ is the inverse of the Hermitian matrix
$$(a_{jk}(z))_{n\times n}=\frac{\partial \varphi}{\partial z_j\partial \overline{z}_k}(z).$$
Given a (0,1) form $u=\sum_{j=1}^{n}u_jd\overline{z}_j$, the original H\"ormander estimate then says that, for $\varphi$ plurisubharmonic, if $u$ is a $\overline{\partial}$-closed (0,1) form with $L^2_{\mathrm{loc}}(\mathbb{C}^n)$ coefficients, then there is a solution $Su$ of $\overline{\partial}v=u$ such that
$$\int_{\mathbb{C}^n}|Su(z)|^2e^{-\varphi(z)}dV(z)\leq \int_{\mathbb{C}^n}|u(z)|^2_{i\partial \overline{\partial}\varphi}e^{-\varphi(z)}dV(z).$$
This estimate is implicit in \cite{H}, where the norm $|u|_{i\partial \overline{\partial}\varphi}$ is replaced by $|u|^2/c$, with $c$ the smallest eigenvalue of the matrix $(a_{jk})_{n\times n}$. The precise formulation can be found in \cite{D}. For $A=(a_{ij})_{n\times n}$ and $\xi=(\xi_1,\cdots,\xi_n)$, set $\xi A=(\sum_{j=1}^{n}\xi_ja_{j1},\cdots, \sum_{j=1}^{n}\xi_ja_{jn})$.
\begin{lemma}\label{n3p9}
  Let $A$ be a $n\times n$ Hermitian matrix and $\mathcal{M}$ be a subspace of $\mathbb{C}^n$. If there exist $\alpha,\beta>0$ such that
  $$\langle \xi A,\xi \rangle=\alpha |P_\mathcal{M}\xi|^2+\beta|(I-P_\mathcal{M})\xi|^2,$$
  for any $\xi \in \mathbb{C}^n$, where $P_\mathcal{M}$ is the projection from $\mathbb{C}^n$ onto $\mathcal{M}$, then $A$ is inversible and
  $$\langle \xi A^{-1},\xi \rangle=\frac{1}{\alpha} |P_\mathcal{M}\xi|^2+\frac{1}{\beta}|(I-P_\mathcal{M})\xi|^2,$$
  for any $\xi \in \mathbb{C}^n$.
\end{lemma}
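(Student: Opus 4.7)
The plan is to reduce the identity for the quadratic form to an operator identity for the matrix, and then invert the matrix explicitly using the orthogonal decomposition $\mathbb{C}^n = \mathcal{M} \oplus \mathcal{M}^\perp$.

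First I would recast the hypothesis as an operator identity. Observe that $P_\mathcal{M}$ is an orthogonal projection, hence self-adjoint and idempotent, so $|P_\mathcal{M}\xi|^2=\langle P_\mathcal{M}\xi,\xi\rangle$ and similarly $|(I-P_\mathcal{M})\xi|^2=\langle (I-P_\mathcal{M})\xi,\xi\rangle$. The hypothesis then reads
$$\langle \xi A,\xi\rangle=\langle(\alpha P_\mathcal{M}+\beta(I-P_\mathcal{M}))\xi,\xi\rangle$$
for every $\xi\in\mathbb{C}^n$. Since both $A$ and $\alpha P_\mathcal{M}+\beta(I-P_\mathcal{M})$ are Hermitian operators, and a Hermitian sesquilinear form is determined by its values on the diagonal via polarization, this forces $A=\alpha P_\mathcal{M}+\beta(I-P_\mathcal{M})$ as operators on $\mathbb{C}^n$.

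Next I would write down the explicit inverse and verify it. Using $P_\mathcal{M}^2=P_\mathcal{M}$, $(I-P_\mathcal{M})^2=I-P_\mathcal{M}$, and the orthogonality relation $P_\mathcal{M}(I-P_\mathcal{M})=(I-P_\mathcal{M})P_\mathcal{M}=0$, a direct multiplication gives
$$\Bigl(\alpha P_\mathcal{M}+\beta(I-P_\mathcal{M})\Bigr)\Bigl(\tfrac{1}{\alpha}P_\mathcal{M}+\tfrac{1}{\beta}(I-P_\mathcal{M})\Bigr)=P_\mathcal{M}+(I-P_\mathcal{M})=I,$$
so $A$ is invertible and $A^{-1}=\tfrac{1}{\alpha}P_\mathcal{M}+\tfrac{1}{\beta}(I-P_\mathcal{M})$. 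Since $\alpha,\beta>0$, invertibility is also clear a priori because the quadratic form in the hypothesis is positive definite.

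Finally I would evaluate the desired quadratic form on $A^{-1}$. Applying the same polarization reasoning in reverse, or simply expanding,
$$\langle \xi A^{-1},\xi\rangle=\tfrac{1}{\alpha}\langle P_\mathcal{M}\xi,\xi\rangle+\tfrac{1}{\beta}\langle(I-P_\mathcal{M})\xi,\xi\rangle=\tfrac{1}{\alpha}|P_\mathcal{M}\xi|^2+\tfrac{1}{\beta}|(I-P_\mathcal{M})\xi|^2,$$
which is the asserted formula. There is no real obstacle here; the only point requiring a small amount of care is to make sure the bookkeeping between the row-vector convention $\xi A$ and the standard operator action matches the Hermitian symmetry, but this is automatic once one notes that $A$ being Hermitian makes $\langle \xi A,\xi\rangle$ a real Hermitian form in $\xi$ that is uniquely determined by its diagonal values.
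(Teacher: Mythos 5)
Your proposal is correct and follows essentially the same route as the paper: both identify $A$ with the operator $\alpha P_\mathcal{M}+\beta(I-P_\mathcal{M})$ from the equality of quadratic forms (the paper cites the norm identity $\|\mathscr{A}-\mathscr{B}\|=\sup_{|\xi|=1}|\langle(\mathscr{A}-\mathscr{B})\xi,\xi\rangle|$ for self-adjoint operators, you invoke polarization, which is the same standard fact), and then both invert this operator explicitly using $P_\mathcal{M}^2=P_\mathcal{M}$ and $P_\mathcal{M}(I-P_\mathcal{M})=0$. No gaps.
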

\begin{proof}
  Define a linear tansformation $\mathscr{A}$ on $\mathbb{C}^n$ by $\mathscr{A}\xi=\xi A\textup{ for }\xi=(\xi_1,\cdots,\xi_n) \in \mathbb{C}^n.$
  Since $A$ is Hermitian, $\mathscr{A}$ is self-adjoint. Define another linear tansformation $\mathscr{B}$ on $\mathbb{C}^n$ by $\mathscr{B}\xi=\alpha P_\mathcal{M}\xi +\beta(I-P_\mathcal{M})\xi\textup{ for }\xi \in \mathbb{C}^n,$ then $\mathscr{B}$ is also self-adjoint. From page 7 of \cite{Zhu}, we know that
  $$\|\mathscr{A}-\mathscr{B}\|=\sup\limits_{\xi \in \mathbb{C}^n,|\xi|=1}|\langle (\mathscr{A}-\mathscr{B})\xi,\xi \rangle|=0.$$
  Therefore, $\mathscr{A}=\mathscr{B}$. It is easy to check that $\mathscr{A}^{-1}=\alpha^{-1}P_\mathcal{M} +\beta^{-1}(I-P_\mathcal{M})$ and the corresponding matrix is $A^{-1}$, namely, $\mathscr{A}^{-1}\xi=\xi A^{-1}$ for $\xi \in \mathbb{C}^n.$ It follows that
  $$\langle \xi A^{-1},\xi \rangle=\langle \mathscr{A}^{-1}\xi,\xi \rangle=\frac{1}{\alpha} |P_\mathcal{M}\xi|^2+\frac{1}{\beta}|(I-P_\mathcal{M})\xi|^2,$$
  for any $\xi \in \mathbb{C}^n$. The proof is finished.
\end{proof}
Now we set $\varphi(z)=\Psi(|z|^2)$. We have the following $L^2$ estimate.
\begin{lemma}\label{3p8}
If $u=\sum_{j=1}^{n}u_jd\overline{z}_j$ is a $\overline{\partial}$-closed (0,1) form with $u_j\in L^2_{\mathrm{loc}}(\mathbb{C}^n)$, then there exists a solution $Su$ of $\overline{\partial}v=u$ such that
$$\int_{\mathbb{C}^n}|Su(z)|^2e^{-\Psi(|z|^2)}dV(z)\lesssim \int_{\mathbb{C}^n}\left(\frac{|u(z)|^2}{\Phi '(|z|^2)}+\frac{\left|u(z)\land\overline{\partial}|z|\right|^2}{\Psi '(|z|^2)}\right)e^{-\Psi(|z|^2)}dV(z).$$
\end{lemma}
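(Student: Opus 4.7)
The plan is to apply H\"ormander's $L^2$-estimate (stated just before the lemma) with the weight $\varphi(z)=\Psi(|z|^2)$, which is plurisubharmonic thanks to \eqref{cdt1}. Once this is done, it suffices to prove the pointwise bound
$$
|u(z)|^2_{i\partial\overline{\partial}\varphi}\lesssim\frac{|u(z)|^2}{\Phi'(|z|^2)}+\frac{\left|u(z)\land\overline{\partial}|z|\right|^2}{\Psi'(|z|^2)},
$$
since the desired integral estimate then follows by integrating against $e^{-\Psi(|z|^2)}\,dV$. The pointwise bound is in turn a linear-algebra computation on the Levi matrix of $\varphi$ together with Lemma \ref{n3p9}.

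First I would compute the Hermitian matrix $A=(a_{jk})$ with $a_{jk}=\partial^2\varphi/\partial z_j\partial\overline{z}_k$. A direct differentiation gives
$$
a_{jk}=\Psi'(|z|^2)\delta_{jk}+\Psi''(|z|^2)\,z_k\overline{z}_j,
$$
so that, for every $\xi\in\mathbb{C}^n$,
$$
\langle\xi A,\xi\rangle=\Psi'(|z|^2)|\xi|^2+\Psi''(|z|^2)\Bigl|\sum_j\xi_j\overline{z}_j\Bigr|^2.
$$
For $z\ne 0$ the identity $|P_z\xi|^2=|\sum_j\xi_j\overline{z}_j|^2/|z|^2$ combined with $\Phi'(x)=\Psi'(x)+x\Psi''(x)$ (which comes from $\Phi(x)=x\Psi'(x)$) rearranges this into
$$
\langle\xi A,\xi\rangle=\Phi'(|z|^2)|P_z\xi|^2+\Psi'(|z|^2)|(I-P_z)\xi|^2;
$$
the convention $P_0=I$ with $\Phi'(0)=\Psi'(0)$ makes the same identity trivially correct at $z=0$.

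Next I would apply Lemma \ref{n3p9} with $\mathcal{M}=\{\xi z:\xi\in\mathbb{C}\}$, $\alpha=\Phi'(|z|^2)$ and $\beta=\Psi'(|z|^2)$ (both strictly positive by \eqref{cdt1}): $A$ is invertible and
$$
\langle\xi A^{-1},\xi\rangle=\frac{|P_z\xi|^2}{\Phi'(|z|^2)}+\frac{|(I-P_z)\xi|^2}{\Psi'(|z|^2)}.
$$
Taking $\xi=u(z)$ and recalling that the paper's definition $|u|^2_{i\partial\overline{\partial}\varphi}=\sum_{j,k}a^{jk}u_j\overline{u}_k$ is exactly $\langle uA^{-1},u\rangle$, I obtain
$$
|u(z)|^2_{i\partial\overline{\partial}\varphi}=\frac{|P_zu(z)|^2}{\Phi'(|z|^2)}+\frac{|(I-P_z)u(z)|^2}{\Psi'(|z|^2)}.
$$

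The last step is to identify the second numerator with a multiple of $\left|u\land\overline{\partial}|z|\right|^2$. Using $\overline{\partial}|z|=(2|z|)^{-1}\sum_l z_l\,d\overline{z}_l$ gives
$$
u(z)\land\overline{\partial}|z|=\sum_{1\le j<k\le n}\frac{u_j(z)z_k-u_k(z)z_j}{2|z|}\,d\overline{z}_j\land d\overline{z}_k,
$$
whence a short expansion together with the elementary identity $\sum_{j,k}|u_jz_k-u_kz_j|^2=2\bigl(|u|^2|z|^2-|\langle u,z\rangle|^2\bigr)$ produces
$$
\left|u(z)\land\overline{\partial}|z|\right|^2=\tfrac{1}{4}\bigl(|u(z)|^2-|P_zu(z)|^2\bigr)=\tfrac{1}{4}|(I-P_z)u(z)|^2.
$$
Combining this with the trivial bound $|P_zu|^2\le|u|^2$ yields the required pointwise inequality, and the standard H\"ormander estimate (for the plurisubharmonic weight $\varphi$) then supplies the canonical solution $Su$ together with the claimed integral bound. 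I expect the only delicate point to be the correct splitting of the Levi form into its complex normal and complex tangential parts along the sphere $\{|w|=|z|\}$; this is precisely the role of Lemma \ref{n3p9}, so once that identification is made the rest is bookkeeping.
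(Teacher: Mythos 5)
Your proposal is correct and follows essentially the same route as the paper: compute the Levi form of $\varphi(z)=\Psi(|z|^2)$, diagonalize it as $\Phi'(|z|^2)|P_z\xi|^2+\Psi'(|z|^2)|(I-P_z)\xi|^2$, invert via Lemma \ref{n3p9}, identify $|(I-P_z)u|^2$ with $4\left|u\land\overline{\partial}|z|\right|^2$, and invoke H\"ormander's estimate. The only differences are cosmetic (Lagrange's identity in place of the paper's direct expansion of $\left|u\land\overline{\partial}|z|\right|^2$).
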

\begin{proof}
Set $z=(z_1,\cdots,z_n)$ and $\varphi(z)=\Psi(|z|^2)$. Observe that
$$a_{jk}(z):=\frac{\partial \varphi}{\partial z_j\partial \overline{z}_k}(z)=\Psi '(|z|^2)\delta_{jk}+\Psi ''(|z|^2)\overline{z}_jz_k,$$
where $\delta_{jk}=1$ for $j=k$ and $\delta_{jk}=0$ otherwise. It is easy to check that
$$\sum_{j,k=1}^{n}a_{jk}(z)\xi_j\overline{\xi}_k=|\xi|^2\Psi '(|z|^2)+|\langle z,\xi \rangle |^2\Psi ''(|z|^2)=\Phi '(|z|^2)|P_z\xi|^2+\Psi '(|z|^2)|(I-P_z)\xi|^2,$$
for $\xi=(\xi_1\cdots,\xi_n)\in \mathbb{C}^n$. This shows that $\varphi(z)=\Psi(|z|^2)$ is plurisubharmonic. Set $A=(a_{jk})_{n\times n}$, then
$$\sum_{j,k=1}^{n}a_{jk}(z)\xi_j\overline{\xi}_k=\langle \xi A,\xi\rangle$$
for $\xi=(\xi_1\cdots,\xi_n)\in \mathbb{C}^n$. 
If we identity $u$ with $(u_1,\cdots,u_n)\in \mathbb{C}^n$, then by Lemma \ref{n3p9} we have
\begin{align}\label{3e30}
  |u(z)|^2_{i\partial \overline{\partial}\varphi}&=\langle u(z)A^{-1},u(z) \rangle=\frac{|P_zu(z)|^2}{\Phi '(|z|^2)}+\frac{|(I-P_z)u(z)|^2}{\Psi '(|z|^2)}
  \leq \frac{|u(z)|^2}{\Phi '(|z|^2)}+\frac{|u(z)|^2-|P_zu(z)|^2}{\Psi '(|z|^2)}.
\end{align}
Notice that
$$u(z)\land \overline{\partial}|z|=u(z)\land\left(\sum_{k=1}^{n}\frac{z_kd\overline{z}_k}{2|z|}\right)=
\sum_{1\leq j<k\leq n}\left(\frac{u_j(z)z_k}{2|z|}-\frac{u_k(z)z_j}{2|z|}\right)d\overline{z}_j\land d\overline{z}_k. $$
It follows that
\begin{align*}
  |u(z)\land\overline{\partial}|z||^2&=\sum_{1\leq j<k\leq n}\left|\frac{u_j(z)z_k}{2|z|}-\frac{u_k(z)z_j}{2|z|}\right|^2\\
  &=\sum_{1\leq j<k\leq n}\left(\frac{|u_j(z)|^2|z_k|^2}{4|z|^2}+\frac{|u_k(z)|^2|z_j|^2}{4|z|^2}\right)+\left(
  -\frac{\overline{u_j(z)}\overline{z}_ku_k(z)z_j}{4|z|^2}-\frac{u_j(z)z_k\overline{u_k(z)}\overline{z}_j}{4|z|^2}\right)\\
  &=\sum_{1\leq j,k\leq n}\frac{|u_j(z)|^2|z_k|^2}{4|z|^2}-\frac{\overline{u_j(z)}\overline{z}_ku_k(z)z_j}{4|z|^2}\\
  &=\frac{1}{4}\left(\sum_{j=1}^{n}|u_j(z)|^2\right)\left(\sum_{k=1}^{n}|z_k|^2/|z|^2\right)
  -\frac{1}{4}\left(\sum_{j=1}^{n}z_j\overline{u_j}/|z|\right)\left(\sum_{k=1}^{n}u_k(z)\overline{z}_k/|z|\right)\\
  &=\frac{1}{4}|u(z)|^2-\frac{1}{4}|P_zu(z)|^2,
\end{align*}
which, together with \eqref{3e30}, further implies that
$$|u(z)|^2_{i\partial \overline{\partial}\varphi}\lesssim
\frac{|u(z)|^2}{\Phi '(|z|^2)}+\frac{\left|u(z)\land\overline{\partial}|z|\right|^2}{\Psi '(|z|^2)}.$$
By applying H\"omander's $L^2$ estimates, we arrive at the desired result.
\end{proof}
\begin{proof}[Proof of Theorem \ref{1p1}]
			(A)$\Rightarrow$(B): 
By Lemma \ref{2p1}, \eqref{ex2e6}, and Lemma \ref{3p1}, we know that, there is a constant $r_0<1/8$ such that for every $z\in \mathbb{C}^n$ and $w \in D(z,r)$ with $0<r\leq r_0$,
		\begin{align}
			\label{3e31}
			|k_z(w)|^2e^{-\Psi(|w|^2)}\simeq K(w,w)e^{-\Psi(|w|^2)}\simeq  \Phi'(|w|^2)\Psi'(|w|^2)^{n-1} \simeq  \Phi'(|z|^2)\Psi'(|z|^2)^{n-1}\simeq \frac{1}{|D(z,r)|}\neq 0.
		\end{align}
Moreover, $k_z(\cdot)^{-1}\in H(D(z,r))$. Then for all $z \in \mathbb{C}^n$ and $0<r\leq r_0$, we have
\begin{align*}			
\|H_f\|^2&\geq \|H_fk_z\|_\Psi^2=\int_{\mathbb{C}^n}|fk_z(w)-P(fk_z)(w)|^2e^{-\Psi(|w|^2)}dV(w)\\
			&=\int_{\mathbb{C}^n}|f(w)-P(fk_z)(w)k_z(w)^{-1}|^2|k_z(w)|^2e^{-\Psi(|w|^2)}dV(w)\\
			&\geq \int_{D(z,r)}|f(w)-P(fk_z)(w)k_z(w)^{-1}|^2|k_z(w)|^2e^{-\Psi(|w|^2)}dV(w)\\
			&\simeq \frac{1}{|D(z,r)|}\int_{D(z,r)}\left|f(w)-k_z(w)^{-1}P(fk_z)(w)\right|^2dV(w)\\
			&\geq G_r(f)(z)^2.
\end{align*}
		It follows that $f\in \textup{BDA}_{\Psi,r}$ for all $r\leq r_0$. Moreover,
\begin{align}\label{3e32}
  \|f\|_{\textup{BDA}_{\Psi,r}}\leq C\|H_f\|.
\end{align}
		(B)$\Rightarrow$(A): It is easy to see that if $f\in \textup{BDA}_{\Psi,r_0}$ for some $r_0$, then $f\in \textup{BDA}_{\Psi,r}$ for all $r\leq r_0$. Moreover,
\begin{align}\label{3e33}
  \|f\|_{\textup{BDA}_{\Psi,r}}\leq C\|f\|_{\textup{BDA}_{\Psi,r_0}},
\end{align}
where $C>0$ is independent of $f$. Without loss of generality, we may suppose that $f\in \textup{BDA}_{\Psi,r}$ with $0<r<1/8$. Thus, we can decompose $f=f_1+f_2$ as in Lemma \ref{3p7} with the following property:
\begin{align}\label{3e34}
f_1\in C^2(\mathbb{C}^n),\qquad
\sup_{z\in \mathbb{C}^n }\left\{\frac{|\overline{\partial}f_1(z)|}{\Phi'(|z|^2)^{1/2}}
+\frac{\left|\overline{\partial}f_1(z)\land \overline{\partial}|z|\right|}{\Psi'(|z|^2)^{1/2}}\right\}\leq C\|f\|_{\textup{BDA}_{\Psi,r}}<\infty,
\end{align}
and
\begin{align}\label{3e35}
f_2\in L^2_{\mathrm{loc}}(\mathbb{C}^n),\qquad \sup_{z\in \mathbb{C}^n }\left\{M_{\frac{r}{16t^4}}(f_2)(z)\right\}\leq C\|f\|_{\textup{BDA}_{\Psi,r}}<\infty.
\end{align}
Then $H_{f_1}$ and $H_{f_2}$ are well defined on $\Gamma= \text{span}\{K_z:z\in \mathbb{C}^n\}$. In fact, by Lemma \ref{2p4} with $p=1$(here $\frac{1}{2}\Psi$ is replaced by $\Psi$), for $g\in \Gamma$ and $z\in \mathbb{C}^n$, we have
\begin{align*}
  \int_{\mathbb{C}^n}|f_2(w)g(w)K_z(w)|e^{-\Psi(|w|^2)}dV(w)\leq C\int_{\mathbb{C}^n}|g(w)K_z(w)|e^{-\Psi(|w|^2)}\widehat{|f_2|}_{\frac{r}{16t^4}}(w)dV(w).
\end{align*}
By H\"{o}lder's inequality,
\begin{align*}
  \widehat{|f_2|}_{\frac{r}{16t^4}}(w)\leq M_{\frac{r}{16t^4}}(|f_2|)(w)\leq C\|f\|_{\textup{BDA}_{\Psi,r}}.
\end{align*}
Thus, by applying Cauchy-Schwarz inequality, we have
\begin{align*}
  \int_{\mathbb{C}^n}|f_2(w)g(w)K_z(w)|e^{-\Psi(|w|^2)}dV(w)\leq C\int_{\mathbb{C}^n}|g(w)K_z(w)|e^{-\Psi(|w|^2)}dV(w)\leq \|g\|_\Psi\|K_z\|_\Psi<\infty.
\end{align*}
This implies that $H_{f_2}$ is well defined on $\Gamma.$ Since $H_{f_1}=H_f-H_{f_2}$, we see that $H_{f_1}$ is also well defined on $\Gamma.$
Therefore, it suffices to prove that $H_{f_1}$ and $H_{f_2}$ are bounded. By \cite[Theorem 19]{WTH}, $d\mu=|f_2|^2dV$ is a (2,2) Fock-Carleson measure, i.e.
$$||f_2g||_\Psi\leq C\sup_{z\in \mathbb{C}^n }\left\{M_{\frac{r}{16t^4}}(f_2)(z)\right\} ||g||_\Psi.$$
Therefore,
		$$\|H_{f_2}g\|_\Psi=\|(I-P)(f_2g)||_\Psi\leq C\|f_2g\|_\Psi\leq C\sup_{z\in \mathbb{C}^n }\left\{M_{\frac{r}{16t^4}}(f_2)(z)\right\} \|g\|_\Psi.$$
It follows that $H_{f_2}$ is bounded. Furthermore,
\begin{align}\label{3e39}
  \|H_{f_2}\|\leq C
\sup_{z\in \mathbb{C}^n }\left\{M_{\frac{r}{16t^4}}(f_2)(z)\right\}.
\end{align}
 Now we are going to prove that $H_{f_1}$ is bounded. Let $g \in F^2_\Psi$.  Since $g$ is holomorphic and $f_1\in C^2(\mathbb{C}^n)$, we have
$$\overline{\partial} (g\overline{\partial}f_1)=\overline{\partial}g\land \overline{\partial}f_1+g\land \overline{\partial}^2f_1=0.$$
Therefore, $g\overline{\partial}f_1$ is a $\overline{\partial}$-closed (0,1) form with $L^2_{\mathrm{loc}}$ coefficients.
Thus by Theorem \ref{3p8}, we see that for any $ g\in \Gamma,$ there is a solution $S(g\overline{\partial}f_1)$ of the equation $\overline{\partial}v=g\overline{\partial}f_1$ such that
\begin{align}\label{3e36}
  \int_{\mathbb{C}^n}|S(g\overline{\partial}f_1)(z)|^2e^{-\Psi(|z|^2)}dV(z)\lesssim \int_{\mathbb{C}^n}\left(\frac{|g(z)\overline{\partial}f_1(z)|^2}{\Phi '(|z|^2)}+\frac{\left|g(z)\overline{\partial}f_1(z)\land\overline{\partial}|z|\right|^2}{\Psi '(|z|^2)}\right)e^{-\Psi(|z|^2)}dV(z).
\end{align}
Since $\overline{\partial}(H_{f_1}g)-\overline{\partial}(S(g\overline{\partial}f_1))=0$, we see that $H_{f_1}g-S(g\overline{\partial}f_1)$ is holomorphic. Since $H_{f_1}g=H_{f}g-H_{f_2}g\in L^2_\Psi$, we have $H_{f_1}g-S(g\overline{\partial}f_1)\in F^2_\Psi.$ This, together with the fact $H_{f_1}g=f_1g-P(f_1g)\perp F^2_\Psi$, implies that
\begin{align}\label{3e37}
    \|H_{f_1}g\|^2_\Psi&=\|S(g\overline{\partial}f_1)\|^2_\Psi-\|H_{f_1}g-S(g\overline{\partial}f_1)\|^2_\Psi\leq\|S(g\overline{\partial}f_1)\|^2_\Psi.
\end{align}
Combining this with \eqref{3e36}, we obtain
\begin{align*}
    \|H_{f_1}g\|^2_\Psi
    &\lesssim \int_{\mathbb{C}^n}\left(\frac{|g(z)\overline{\partial}f_1(z)|^2}{\Phi '(|z|^2)}+\frac{\left|g(z)\overline{\partial}f_1(z)\land\overline{\partial}|z|\right|^2}{\Psi '(|z|^2)}\right)e^{-\Psi(|z|^2)}dV(z)\\
    &\leq  \sup_{z\in \mathbb{C}^n }\left\{\frac{|\overline{\partial}f_1(z)|^2}{\Phi'(|z|^2)}
+\frac{\left|\overline{\partial}f_1(z)\land \overline{\partial}|z|\right|^2}{\Psi'(|z|^2)}\right\}\|g\|_\Psi^2.
\end{align*}
This shows that $H_{f_1}$ is bounded. Furthermore,
\begin{align}\label{3e38}
  \|H_{f_1}\|\leq C\sup_{z\in \mathbb{C}^n }\left\{\frac{|\overline{\partial}f_1(z)|^2}{\Phi'(|z|^2)}
+\frac{\left|\overline{\partial}f_1(z)\land \overline{\partial}|z|\right|^2}{\Psi'(|z|^2)}\right\}.
\end{align}
Estimate \eqref{1e4} comes from \eqref{3e32}, \eqref{3e34}, \eqref{3e35}, \eqref{3e39} and \eqref{3e38}. The proof is completed.
	\end{proof}
	\begin{proof}[Proof of Theorem \ref{1p2}]
		(A)$\Rightarrow$(B): By the proof of Theorem \ref{1p1}, we know that there is a constant $r_0$ such that for any $z \in \mathbb{C}^n$ and $0<r\leq r_0$,
		\begin{align*}
			||H_fk_z||_\Psi\geq G_r(f)(z).
		\end{align*}
	Since $\{k_z:z\in \mathbb{C}^n\}$ converges weakly to 0 in $F^2_\Psi$ as $z \to \infty$ (see Lemma 12 in \cite{WTH}), we have
	$$\lim_{z\to \infty}||H_fk_z||_\Psi=0,$$
	which implies that
	$$\lim_{z\to \infty}G_r(f)(z)=0.$$
Thus, we have $f\in \textup{VDA}_{\Psi,r}$.

	(B)$\Rightarrow$(A): By the proof of Theorem \ref{1p1}, we can decompose $f=f_1+f_2$ as in Lemma \ref{3p7} with the following property:
\begin{align}\label{3e40}
f_1\in C^2(\mathbb{C}^n),\qquad
\lim_{z\to \infty}\left\{\frac{|\overline{\partial}f_1(z)|^2}{\Phi'(|z|^2)}
+\frac{\left|\overline{\partial}f_1(z)\land \overline{\partial}|z|\right|^2}{\Psi'(|z|^2)}\right\}=0,
\end{align}
and
\begin{align}\label{3e41}
f_2\in L^2_{\mathrm{loc}}(\mathbb{C}^n),\qquad \lim_{z\to \infty}\left\{M_{\frac{r}{16t^4}}(f_2)(z)\right\}=0.
\end{align}
Then $H_{f_1}$ and $H_{f_2}$ are well defined on $\Gamma= \text{span}\{K_z:z\in \mathbb{C}^n\}$. It suffices to prove that $H_{f_1}$ and $H_{f_2}$ are both compact. To this end, let $\{g_j\}_{j=1}^\infty$ be a bounded sequence in $F^2_{\Psi}$ with the property that
$$\lim_{j\to\infty}\sup_{z\in E}|g_j(z)|=0, \textup{ and }\|g_j\|_{\Psi}\leq M_1,$$
where $E$ is an arbitrary compact subset of $\mathbb{C}^n$ and $M_1$ is a positive constant. We are going to prove that for $k=1,2$
$$\lim_{j\to\infty}\|H_{f_k}g_j\|_{\Psi}=0.$$
By checking the proof of Theorem \ref{1p1}, we have
	$$||H_{f_1}g_j||_\Psi^2 \lesssim \int_{\mathbb{C}^n}|g_j(z)|^2\left\{\frac{|\overline{\partial}f_1(z)|}{\Phi'(|z|^2)}+\frac{|\overline{\partial}f_1(z)\land \overline{\partial}|z||^2}{\Psi'(|z|^2)}\right\}e^{-\Psi(|z|^2)}dV(z).$$
	By \eqref{3e40}, for every $\varepsilon>0$, there is a constant $M_2>0$ such that
	$$\sup_{|z|\geq M_2}\left\{\frac{|\bar{\partial}f_1(z)|}{\Phi'(|z|^2)}+\frac{|\overline{\partial}f_1(z)\wedge \overline{\partial}|z||^2}{\Psi'(|z|^2)}\right\}<\varepsilon.$$
	By \eqref{3e41}, for the given $\varepsilon$ and $M_2$ above, there also exists an integer $J$ such that for $j\geq J$,
	$$\sup_{|z|\leq M_2}|g_j(z)|<\varepsilon.$$
	Therefore, for every $\varepsilon>0$, there is an integer $J$ such that for $j\geq J$,
	\begin{align*}
		||H_{f_1}g_j||_\Psi^2 &\lesssim \int_{|z|\leq M_2}|g_j(z)|^2\left\{\frac{|\overline{\partial}f_1(z)|}{\Phi'(|z|^2)}+\frac{|\overline{\partial}f_1(z)\wedge \overline{\partial}|z||^2}{\Psi'(|z|^2)}\right\}e^{-\Psi(|z|^2)}dV(z)\\
&\qquad +\int_{|z|\geq M_2}|g_j(z)|^2\left\{\frac{|\overline{\partial}f_1(z)|}{\Phi'(|z|^2)}+\frac{|\overline{\partial}f_1(z)\wedge \overline{\partial}|z||^2}{\Psi'(|z|^2)}\right\}e^{-\Psi(|z|^2)}dV(z)\\
&\lesssim \varepsilon^2\left\|\frac{|\overline{\partial}f_1(z)|}{\Phi'(|z|^2)}+\frac{|\overline{\partial}f_1(z)\wedge \overline{\partial}|z||^2}{\Psi'(|z|^2)}\right\|_{L^\infty(\mathbb{C}^n,dV)}+\varepsilon M_1^2.
	\end{align*}
Therefore, we have proved that $H_{f_1}$ is compact. It remains to prove
$$\lim_{j\to\infty}\|H_{f_2}g_j\|_{\Psi}=0.$$
 By \cite[Theorem 20]{WTH}, we know that $d\mu=|f_2|^2dV$ is a vanishing (2,2) Fock-Carleson measure, i.e.
$$\lim_{j\to\infty}\|f_2g_j\|_{\Psi}=0.$$
On the other hand, we have
	$$||H_{f_2}g_j||_\Psi=||(I-P)(f_2g_j)||_\Psi\leq \|I-P\|\cdot\|f_2g_j\|_\Psi.$$
It follows that $H_{f_2}$ is also compact. The proof is completed.
	\end{proof}
The following lemma shows the relationship between $\textup{BMO}$(or $\textup{VMO}$) and $\textup{BDA}_{\Psi,r}$(or $\textup{VDA}_{\Psi,r}$ respectively). Its proof is similar to Proposition 6.3 of \cite{HV2}.
\begin{lemma}\label{5p5}
  Let $f\in \mathcal{S}$ and $0<r<\infty$. Then $f\in \textup{BMO}_r$ if and only if $f,\overline{f}\in \textup{BDA}_{\Psi,r}$; $f\in \textup{VMO}_r$ if and only if $f,\overline{f}\in \textup{VDA}_{\Psi,r}$. Furthermore,
  \begin{align}\label{4e17}
    \|f\|_{\textup{BMO}_r} \simeq  \|f\|_{\textup{BDA}_{\Psi,r}}+\|\overline{f}\|_{\textup{BDA}_{\Psi,r}}.
  \end{align}
\end{lemma}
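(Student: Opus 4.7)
The easy implication $f\in\textup{BMO}_r\Rightarrow f,\overline{f}\in\textup{BDA}_{\Psi,r}$ is immediate and I would dispose of it first: since $\widehat{f}_r(z)$ is a constant, hence holomorphic on $D(z,r)$, one has $G_r(f)(z)\leq M_r(f-\widehat{f}_r(z))(z)=MO_r(f)(z)$; and because $\widehat{\overline{f}}_r(z)=\overline{\widehat{f}_r(z)}$, the identity $MO_r(\overline{f})=MO_r(f)$ is pointwise, so likewise $G_r(\overline{f})(z)\leq MO_r(f)(z)$. The VMO $\Rightarrow$ VDA direction reads off from the same pointwise inequalities.

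For the substantive direction, the plan is to select, for each $z\in\mathbb{C}^n$, functions $h_z,g_z\in H(D(z,r))$ furnished by Lemma \ref{3p6} with $M_r(f-h_z)(z)=G_r(f)(z)$ and $M_r(\overline{f}-g_z)(z)=G_r(\overline{f})(z)$, and then bound $MO_r(f)(z)$ by comparing $f$ with the constant $c_z:=\tfrac{1}{2}(h_z(z)+\overline{g_z(z)})$ via the decomposition
$$f-c_z=\tfrac{1}{2}(f-h_z)+\tfrac{1}{2}(f-\overline{g_z})+\tfrac{1}{2}(h_z-h_z(z))+\tfrac{1}{2}(\overline{g_z}-\overline{g_z(z)}).$$
The first two summands have $M_r$-norms equal to $G_r(f)(z)$ and $G_r(\overline{f})(z)$ respectively, so everything reduces to controlling the two oscillation terms $M_r(h_z-h_z(z))(z)$ and $M_r(\overline{g_z}-\overline{g_z(z)})(z)$ by $G_r(f)(z)+G_r(\overline{f})(z)$.

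The pivotal step is a mean value property on the anisotropic region $D(z,r)$: after the unitary change of variables $U_z$ sending $z$ to $(|z|,0,\ldots,0)$, $D(z,r)$ becomes the product of a disk in $\mathbb{C}$ centered at $|z|$ with a Euclidean ball in $\mathbb{C}^{n-1}$ centered at $0$, so applying the one-variable mean value property in each factor together with Fubini yields
$$\int_{D(z,r)}F\,dV=F(z)\,|D(z,r)|\qquad\text{for every }F\text{ holomorphic on }D(z,r).$$
Applied to the holomorphic function $F:=(h_z-h_z(z))(g_z-g_z(z))$, which vanishes at $w=z$, this gives the orthogonality $\langle h_z-h_z(z),\overline{g_z}-\overline{g_z(z)}\rangle_{L^2(D(z,r))}=0$, whence
$$\|h_z-h_z(z)\|^2+\|\overline{g_z}-\overline{g_z(z)}\|^2=\|(h_z-\overline{g_z})-(h_z(z)-\overline{g_z(z)})\|^2\leq\|h_z-\overline{g_z}\|^2,$$
the last inequality coming from the same mean value property applied to $h_z-\overline{g_z}$ (subtracting its mean can only decrease the $L^2$ norm). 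Since $\|h_z-\overline{g_z}\|\leq\|f-h_z\|+\|f-\overline{g_z}\|\leq[G_r(f)(z)+G_r(\overline{f})(z)]|D(z,r)|^{1/2}$, dividing by $|D(z,r)|^{1/2}$ yields $M_r(h_z-h_z(z))(z),\,M_r(\overline{g_z}-\overline{g_z(z)})(z)\leq G_r(f)(z)+G_r(\overline{f})(z)$. Inserting these into the decomposition above gives $MO_r(f)(z)\lesssim G_r(f)(z)+G_r(\overline{f})(z)$, which simultaneously proves \eqref{4e17} and, by letting $z\to\infty$, the VDA $\Rightarrow$ VMO implication.

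The main obstacle I anticipate is verifying the mean value property cleanly on $D(z,r)$: the standard statement is for Euclidean balls or polydiscs, and here the region is the anisotropic ``disk$\,\times\,$ball'' associated to the $F^2_\Psi$ geometry. Once the unitary rotation is used to put $D(z,r)$ into product form balanced about its center, this reduces to a routine Fubini argument; the rest of the proof is triangle inequality and orthogonality bookkeeping.
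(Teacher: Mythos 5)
Your proof is correct, and it is essentially the argument the paper has in mind: the paper gives no details here, saying only that the proof is similar to Proposition 6.3 of \cite{HV2}, and your decomposition-plus-orthogonality scheme (minimizers $h_z,g_z$ from Lemma \ref{3p6}, comparison with the constant $\tfrac12(h_z(z)+\overline{g_z(z)})$, and the vanishing of $\int_{D(z,r)}(h_z-h_z(z))(g_z-g_z(z))\,dV$ via the mean value property) is exactly that standard route, written out in full. The one genuinely setting-specific point is the mean value property on the anisotropic region $D(z,r)$, and you handle it correctly: after the unitary $U_z$, $D(z,r)$ is a disk in the $w_1$-variable centered at $|z|$ times a Euclidean ball in the remaining variables centered at $0$, so Fubini plus the one-variable circle means and the harmonic mean value property give $\frac{1}{|D(z,r)|}\int_{D(z,r)}F\,dV=F(z)$ for $F$ holomorphic and integrable there (integrability of your $F$ follows from Cauchy--Schwarz since $h_z,g_z\in L^2(D(z,r))$, which justifies the Fubini step). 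Together with the trivial pointwise inequalities $G_r(f)(z),G_r(\overline f)(z)\le MO_r(f)(z)$, your bound $MO_r(f)(z)\lesssim G_r(f)(z)+G_r(\overline f)(z)$ yields both \eqref{4e17} and the $\textup{VMO}_r$/$\textup{VDA}_{\Psi,r}$ equivalence, as claimed.
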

As a consequence of Lemma \ref{5p5}, Theorem \ref{1p1} and Theorem \ref{1p2}, we obtain characterizations of boundedness and compactness of coupled Hankel operators which are the main results of \cite{SY,WCZ,TW}.
\begin{theorem}
  Let $f\in \mathcal{S}$. Then $H_{f},H_{\overline{f}}:F^2_\Psi \to L^2_\Psi$ are both bounded if and only if $f\in \textup{BMO}_r$ for some(or any) $r>0$. Furthermore,
  $$\|H_{f}\|+\|H_{\overline{f}}\|\simeq \|f\|_{\textup{BMO}_r}.$$
\end{theorem}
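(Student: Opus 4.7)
The plan is to treat this theorem as a direct corollary of Theorem \ref{1p1} applied separately to the symbols $f$ and $\overline{f}$, combined with Lemma \ref{5p5}, which identifies $\textup{BMO}_r$ with the intersection $\{g\in \mathcal{S}:g,\overline{g}\in \textup{BDA}_{\Psi,r}\}$ and provides the attendant norm equivalence. So no new analytical machinery is required beyond what has already been developed; everything boils down to chaining two biconditionals.

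For the forward direction, I would suppose both $H_f$ and $H_{\overline{f}}$ are bounded. Theorem \ref{1p1} applied to the symbol $f$ yields some radius $r_1>0$ with $f\in \textup{BDA}_{\Psi,r_1}$, and applied to $\overline{f}$ yields some $r_2>0$ with $\overline{f}\in \textup{BDA}_{\Psi,r_2}$. Setting $r=\min\{r_1,r_2\}$ and using the monotonicity remark immediately following Theorem \ref{1p2} that $\textup{BDA}_{\Psi,r_0}\subseteq \textup{BDA}_{\Psi,r}$ whenever $0<r\leq r_0$, both $f$ and $\overline{f}$ lie in $\textup{BDA}_{\Psi,r}$, and Lemma \ref{5p5} then places $f$ in $\textup{BMO}_r$. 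For the converse, starting from $f\in \textup{BMO}_r$ for some $r>0$, Lemma \ref{5p5} supplies $f,\overline{f}\in \textup{BDA}_{\Psi,r}$, and two applications of Theorem \ref{1p1} yield the boundedness of both $H_f$ and $H_{\overline{f}}$. The "some (or any) $r>0$" quantifier is legitimate because Lemma \ref{5p5} is stated for every $r>0$ and the membership $f,\overline{f}\in \textup{BDA}_{\Psi,r}$ passes through the monotonicity in $r$, so whichever $r$ is used to test $\textup{BMO}_r$ produces the same pair of bounded Hankel operators.

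The norm equivalence then falls out by concatenating the two-sided estimates
\[
  \|H_f\|+\|H_{\overline{f}}\|\ \simeq\ \|f\|_{\textup{BDA}_{\Psi,r}}+\|\overline{f}\|_{\textup{BDA}_{\Psi,r}}\ \simeq\ \|f\|_{\textup{BMO}_r},
\]
where the first $\simeq$ is (\ref{1e4}) applied twice and the second is (\ref{4e17}).

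The main obstacle is not in the present theorem but in its two inputs: Theorem \ref{1p1} absorbs the heavy lifting, namely the BDA decomposition in Lemma \ref{3p7} plus the anisotropic H\"ormander $L^2$-estimate in Lemma \ref{3p8} tailored to the reproducing-kernel geometry of $F^2_\Psi$, while Lemma \ref{5p5} encapsulates the Luecking-type comparison between mean oscillation and distance-to-analytic. Once both are in hand, the corollary is a short formal deduction, and the only bookkeeping concern is reconciling the possibly different radii appearing in Theorem \ref{1p1}, Lemma \ref{5p5}, and the final assertion, which is handled by the monotonicity of $\textup{BDA}_{\Psi,r}$ in $r$ noted above.
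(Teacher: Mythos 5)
Your proposal is correct and follows essentially the same route as the paper, which likewise presents this theorem as an immediate consequence of Theorem \ref{1p1} and Lemma \ref{5p5} (together with the norm equivalences \eqref{1e4} and \eqref{4e17}), with no further argument given. The only caveat is the ``or any $r>0$'' clause: your reasoning yields $f\in\textup{BMO}_r$ only for sufficiently small $r$ (via the downward monotonicity of $\textup{BDA}_{\Psi,r}$), and passing to all $r>0$ requires the standard covering/chaining argument for mean oscillation (or the quoted result of \cite{WCZ}), a point the paper itself also leaves implicit.
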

\begin{theorem}
  Let $f\in \mathcal{S}$. Then $H_{f},H_{\overline{f}}:F^2_\Psi \to L^2_\Psi$ are both compact if and only if $f\in \textup{VMO}_r$ for some(or any) $r>0$.
\end{theorem}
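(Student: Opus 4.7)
The plan is to derive this theorem as an immediate corollary by chaining Theorem \ref{1p2} and Lemma \ref{5p5}. First I would note that, by Theorem \ref{1p2} applied to the symbol $f$, the Hankel operator $H_f$ is compact if and only if $f\in \textup{VDA}_{\Psi,r}$ for some $0<r<\infty$. Applying the same theorem to $\overline{f}\in\mathcal{S}$, the operator $H_{\overline{f}}$ is compact if and only if $\overline{f}\in \textup{VDA}_{\Psi,r'}$ for some $0<r'<\infty$. Hence both $H_f$ and $H_{\overline{f}}$ are compact if and only if there exist some $r,r'>0$ with $f\in \textup{VDA}_{\Psi,r}$ and $\overline{f}\in \textup{VDA}_{\Psi,r'}$.

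Next I would invoke the monotonicity property recorded after Theorem \ref{1p1}, namely that if a function belongs to $\textup{BDA}_{\Psi,r_0}$ then it belongs to $\textup{BDA}_{\Psi,s}$ for every $0<s\le r_0$; the same monotonicity clearly carries over to the vanishing class $\textup{VDA}_{\Psi,s}$ by inspection of the definition of $G_s(f)(z)$. Taking $r_0=\min\{r,r'\}$, both $f$ and $\overline{f}$ lie in $\textup{VDA}_{\Psi,r_0}$, so the simultaneous compactness statement is equivalent to the existence of a single radius $r_0>0$ for which $f,\overline{f}\in \textup{VDA}_{\Psi,r_0}$.

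Finally I would apply Lemma \ref{5p5}, which asserts precisely that $f\in \textup{VMO}_r$ if and only if $f,\overline{f}\in \textup{VDA}_{\Psi,r}$. Combining this with the previous step, both $H_f$ and $H_{\overline{f}}$ are compact if and only if $f\in \textup{VMO}_r$ for some $r>0$. The "for any $r>0$" half of the statement comes for free from the fact, already recorded in the discussion following result (B) of \cite{WCZ}, that $\textup{VMO}_r$ is independent of $r$ (equivalently, $f\in\textup{VMO}$ is characterized by $\textup{MO}_r\, f\to 0$ for every $r$).

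There is no real obstacle here: the theorem is a formal reassembly of previously established results, so the only thing to be careful about is bookkeeping the radii across the two separate applications of Theorem \ref{1p2} (one for $f$, one for $\overline{f}$), and then invoking the monotonicity of the VDA classes in $r$ so that one can pick a common radius before feeding the pair into Lemma \ref{5p5}.
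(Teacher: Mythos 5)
Your proposal is correct and follows essentially the same route as the paper, which states this theorem as an immediate consequence of Theorem \ref{1p2} combined with Lemma \ref{5p5} (the paper gives no further argument). Your extra bookkeeping — using the monotonicity of the $\textup{VDA}_{\Psi,r}$ classes in $r$ to pass to a common radius before applying Lemma \ref{5p5}, and appealing to the $r$-independence of $\textup{VMO}_r$ recorded from \cite{WCZ} for the ``or any'' clause — is exactly the implicit content of the paper's one-line deduction.
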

\section{Some Integral Estimates}\label{s4}
This section is devoted to give integral
type estimates about the reproducing kernels. We will deal with integrals of the following type frequently.
\begin{align}\label{q7}
  I_p(r,a,s,b)=\int_{a}^{\infty}y^s\Psi '(r^2+y)^{b}e^{-\frac{p}{2}\Psi(r^2+y)}\,dy,
\end{align}
where $0\leq r,a<\infty$, $-1<s<\infty$, $b\in\mathbb{R}$ and $0<p<\infty$. For simplicity, we set
$$C(r,a,s)=[a\Psi'(r^2)]^se^{-\frac{p}{4}\Psi'(r^2)a}+\int_{\frac{p}{2}\Psi'(r^2)a}^{\infty}u^{s}e^{-u}\,du.$$
In this paper, we are interested in the following two choices:(1) $a=0$ and (2) $a\Psi'(r^2)\to \infty.$ Set
$$C_\Sigma(r,a,s):= C(r,a,0)+C(r,a,1)+C(r,a,s).$$
Since $y^{s'}\leq 1+y+y^s$ for all $y\geq0$ and $0\leq s'\leq s$, we have
$$\sup\{C(r,a,s'):0\leq s'\leq s\}\leq C_\Sigma(r,a,s).$$
\begin{lemma}\label{qt1}
  Let $0\leq r,a<\infty$, $b\leq 0$. For fixed $s\geq -1$ and $0<p<\infty$, there holds
  \begin{align}\label{qq1}
    \int_{a}^{\infty}y^s\Psi '(r^2+y)^be^{-\frac{p}{2}\Psi(r^2+y)}\,dy\lesssim C(r,a,s)e^{-\frac{p}{2}\Psi(r^2)}\Psi'(r^2)^{b-s-1}.
  \end{align}
\end{lemma}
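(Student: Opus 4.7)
The plan is to carry out a Laplace-type estimate, exploiting the convexity of $\Psi$ and the monotonicity of $\Psi'$ guaranteed by \eqref{cdt1}. First, since $b\le 0$ and $\Psi'$ is non-decreasing, one has the pointwise bound
\[
\Psi'(r^2+y)^{b}\le \Psi'(r^2)^{b}\qquad (y\ge 0).
\]
Second, since $\Psi''\ge 0$, the function $\Psi$ is convex, so the tangent-line estimate
\[
\Psi(r^2+y)\ \ge\ \Psi(r^2)+\Psi'(r^2)\,y\qquad (y\ge 0)
\]
holds. Inserting these two inequalities into the integrand yields
\[
\int_{a}^{\infty}y^{s}\Psi'(r^2+y)^{b}e^{-\frac{p}{2}\Psi(r^2+y)}\,dy
\ \le\ \Psi'(r^2)^{b}e^{-\frac{p}{2}\Psi(r^2)}\int_{a}^{\infty}y^{s}e^{-\frac{p}{2}\Psi'(r^2)\,y}\,dy,
\]
which removes $\Psi$ from the remaining integral.

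Next, I would perform the substitution $u=\tfrac{p}{2}\Psi'(r^2)\,y$, which is the natural rescaling because $\tfrac{p}{2}\Psi'(r^2)$ is exactly the exponential rate in the last display. This gives $dy=\tfrac{2}{p\Psi'(r^2)}\,du$ and $y^{s}=\bigl(\tfrac{2}{p}\bigr)^{s}\Psi'(r^2)^{-s}u^{s}$, so
\[
\int_{a}^{\infty}y^{s}e^{-\frac{p}{2}\Psi'(r^2)\,y}\,dy
\ \lesssim\ \Psi'(r^2)^{-s-1}\int_{\frac{p}{2}\Psi'(r^2)\,a}^{\infty}u^{s}e^{-u}\,du.
\]
Combining with the previous display gives
\[
\int_{a}^{\infty}y^{s}\Psi'(r^2+y)^{b}e^{-\frac{p}{2}\Psi(r^2+y)}\,dy
\ \lesssim\ e^{-\frac{p}{2}\Psi(r^2)}\Psi'(r^2)^{b-s-1}\int_{\frac{p}{2}\Psi'(r^2)\,a}^{\infty}u^{s}e^{-u}\,du,
\]
and since the $u$-integral is exactly the second summand of $C(r,a,s)$, it is trivially $\le C(r,a,s)$. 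This produces the claimed bound.

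The hypothesis $s\ge -1$ is only used to ensure convergence near $y=0$ in the case $a=0$ (with the convention $s>-1$ there); the substitution step requires no further smoothness on $\Psi$ beyond what \eqref{cdt1} provides. The only place where one might worry is whether the first summand $[a\Psi'(r^2)]^{s}e^{-\frac{p}{4}\Psi'(r^2)a}$ in $C(r,a,s)$ is truly needed—it is not needed for the upper bound in Lemma \ref{qt1} itself, but it is included in the definition of $C(r,a,s)$ because it is a more convenient dominant in the regime $a\Psi'(r^2)\to\infty$ (where the incomplete-gamma integral behaves like $[a\Psi'(r^2)]^{s}e^{-\frac{p}{2}\Psi'(r^2)a}$, and the halved exponent gives headroom for later applications). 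The only genuine point of care is checking that the convexity inequality $\Psi(r^2+y)\ge\Psi(r^2)+\Psi'(r^2)y$ is sharp enough to recover the correct power $\Psi'(r^2)^{b-s-1}$; this is automatic once the substitution $u=\tfrac{p}{2}\Psi'(r^2)\,y$ is performed, so no further structural assumption from \eqref{cdt2} or \eqref{cdt3} is needed here.
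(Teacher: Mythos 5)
Your proposal is correct and follows essentially the same route as the paper's proof: the monotonicity bound $\Psi'(r^2+y)^b\le\Psi'(r^2)^b$ for $b\le 0$, the tangent-line (convexity) estimate $\Psi(r^2+y)\ge\Psi(r^2)+\Psi'(r^2)y$, and the rescaling $u=\tfrac{p}{2}\Psi'(r^2)y$ leading to the incomplete-gamma integral, which is then dominated by $C(r,a,s)$. No discrepancies to report.
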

\begin{proof}
  Since $\Psi'$ is monotone increasing, we have
  \begin{align}\label{q1}
    \Psi(r^2+y)-\Psi(r^2)\geq \Psi'(r^2)y\textup{ for } y\geq 0,
  \end{align}
  and
  \begin{align}\label{q2}
    \Psi '(r^2+y)^b\leq \Psi '(r^2)^b\textup{ for } y\geq 0.
  \end{align}
  From \eqref{q1}, we see that
  \begin{align}\label{q3}
    \int_{a}^{\infty}y^{s}e^{-\frac{p}{2}\Psi(r^2+y)}\,dy
    &\leq e^{-\frac{p}{2}\Psi(r^2)}\int_{a}^{\infty}y^{s}e^{-\frac{p}{2}\Psi'(r^2)y}\,dy\notag\\
    &\simeq e^{-\frac{p}{2}\Psi(r^2)}\Psi'(r^2)^{-s-1}\int_{\frac{p}{2}\Psi'(r^2)a}^{\infty}u^{s}e^{-u}\,du\notag\\
    &\leq C(r,a,s)e^{-\frac{p}{2}\Psi(r^2)}\Psi'(r^2)^{-s-1}.
  \end{align}
  This and \eqref{q2} yield \eqref{qq1}. The proof is completed.
\end{proof}
\begin{lemma}\label{qt2}
  Let $0\leq a,s<\infty$, $b\in\mathbb{R}$ and $0<p<\infty.$ Then,
  \begin{align}\label{qq2}
    a^s\Psi'(r^2+a)^b e^{-\frac{p}{2}\Psi(r^2+a)}\leq [a\Psi'(r^2)]^se^{-\frac{p}{4}\Psi'(r^2)a}e^{-\frac{p}{2}\Psi(r^2)}\Psi'(r^2)^{b-s},
  \end{align}
   as $r\to \infty$.
\end{lemma}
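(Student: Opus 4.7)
The plan is to reduce \eqref{qq2} to a quotient comparison for $\Psi'$, then handle it using convexity together with the growth hypothesis \eqref{cdt3} (respectively \eqref{cdt2} for $n=1$, recast via $\Phi(x)=x\Psi'(x)$).

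First, an algebraic simplification: since $[a\Psi'(r^2)]^s\Psi'(r^2)^{b-s}=a^s\Psi'(r^2)^b$, I cancel the common factor $a^s$ (the case $a=0$ is an equality) and split $e^{-\frac{p}{2}\Psi(r^2+a)}=e^{-\frac{p}{2}[\Psi(r^2+a)-\Psi(r^2)]}e^{-\frac{p}{2}\Psi(r^2)}$. The target \eqref{qq2} becomes
\begin{equation*}
\Psi'(r^2+a)^b\, e^{-\frac{p}{2}[\Psi(r^2+a)-\Psi(r^2)]}\leq \Psi'(r^2)^b\, e^{-\frac{p}{4}\Psi'(r^2)a}.
\end{equation*}
By convexity of $\Psi$ (from $\Psi''\geq 0$), $\Psi(r^2+a)-\Psi(r^2)\geq \Psi'(r^2)a$, so after absorbing half of this into the right-hand side it suffices to prove
\begin{equation*}
\left(\frac{\Psi'(r^2+a)}{\Psi'(r^2)}\right)^b\leq e^{\frac{p}{4}\Psi'(r^2)a}.
\end{equation*}

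When $b\leq 0$, monotonicity of $\Psi'$ forces the left side to be at most $1$, while the right side is at least $1$, so we are done. For $b>0$ I would take logarithms, reducing the claim to $b\int_{r^2}^{r^2+a}[\Psi''(t)/\Psi'(t)]\,dt\leq (p/4)\Psi'(r^2)a$. Using \eqref{cdt3}, $\Psi''(t)/\Psi'(t)\lesssim t^{-1/2}\Psi'(t)^{\eta}$ for large $t$; since $t\geq r^2$ we have $t^{-1/2}\leq r^{-1}$, and monotonicity of $\Psi'$ yields an overall bound of $Cr^{-1}a\,\Psi'(r^2+a)^{\eta}$ when $\eta\geq 0$ (or $Cr^{-1}a\,\Psi'(r^2)^{\eta}$ when $\eta\leq 0$).

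The main obstacle is the regime $0<\eta<1/2$ with $a$ not small, where $\Psi'(r^2+a)^\eta$ need not be comparable to $\Psi'(r^2)$. I plan to close the estimate by partitioning on $a$. In the range where $a$ is small enough that $\Psi'(r^2+a)\simeq \Psi'(r^2)$ (quantified via \eqref{cdt3} and an integration argument as in Lemma~\ref{3p1}), the log-bound becomes $Cr^{-1}\Psi'(r^2)^{\eta}\cdot a\leq (p/4b)\Psi'(r^2)a$, which holds for $r$ large because $r\,\Psi'(r^2)^{1-\eta}\to\infty$ (here I use $\Psi'\geq \Psi'(0)>0$ and $1-\eta>1/2$). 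In the complementary range, I would sharpen the convexity inequality using $\Psi'''\geq 0$ to $\Psi(r^2+a)-\Psi(r^2)\geq \Psi'(r^2)a+\tfrac{1}{2}a^2\Psi''(r^2)$, producing an additional damping factor $e^{-(p/4)a^2\Psi''(r^2)}$ on the left that more than absorbs the at-most polynomial-in-$a$ growth of $(\Psi'(r^2+a)/\Psi'(r^2))^b$.
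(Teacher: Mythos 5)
Your reduction, the case $b\leq 0$, the convexity step $\Psi(r^2+a)-\Psi(r^2)\geq\Psi'(r^2)a$, and the small-$a$ regime are all fine, but the treatment of the complementary (large-$a$) regime has a genuine gap. First, the assertion that $(\Psi'(r^2+a)/\Psi'(r^2))^b$ has at most polynomial growth in $a$ is false under the standing hypotheses: $\Psi(x)=e^x$ satisfies \eqref{cdt1}--\eqref{cdt3} and gives exponential growth $e^{ba}$. More seriously, the inequality you actually reduce to in that regime, namely
\begin{equation*}
\Bigl(\frac{\Psi'(r^2+a)}{\Psi'(r^2)}\Bigr)^{b}\leq e^{\frac p4\Psi'(r^2)a}\,e^{\frac p4 a^{2}\Psi''(r^2)},
\end{equation*}
can fail outright for admissible weights. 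Take $\Psi'(x)=e^{e^{x}}$ (so $\Psi''=e^{x}e^{e^{x}}$, $\Psi'''\geq 0$, and \eqref{cdt2}, \eqref{cdt3} hold for every $\eta\in(0,1/2)$ since $e^{-\eta e^{x}}$ kills all polynomial and single-exponential factors). With $a=2e^{r^{2}}$ the left side is $\exp\bigl(b\,e^{r^{2}}(e^{a}-1)\bigr)\approx\exp\bigl(b\,e^{r^{2}}e^{2e^{r^{2}}}\bigr)$, while the exponent on the right is only of size $e^{3r^{2}}e^{e^{r^{2}}}$, so the ratio of the two exponents tends to infinity as $r\to\infty$. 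The failure is structural: by replacing $\Psi(r^2+a)-\Psi(r^2)$ with its first (or second) order Taylor minorant you freeze the decay rate at the left endpoint $r^{2}$, and for rapidly growing $\Psi'$ this discards almost all of the available decay, which is exactly what must control $\Psi'(r^2+a)^{b}$.

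The way to close this (and what the paper does) is not to linearize the whole exponent but to keep a fixed fraction of it attached to the factor $\Psi'(r^2+x)^{b}$: consider $f_r(x)=\Psi'(r^2+x)^{b}e^{-\frac p4\Psi(r^2+x)}$ and show $f_r'(x)\leq 0$ for all $x\geq0$ once $r$ is large. In your logarithmic formulation this amounts to comparing $b\,\Psi''(t)/\Psi'(t)$ with $\tfrac p4\Psi'(t)$ at the same point $t$, i.e. bounding $b\int_{r^2}^{r^2+a}\Psi''/\Psi'\,dt$ by $\tfrac p4\bigl[\Psi(r^2+a)-\Psi(r^2)\bigr]$ rather than by $\tfrac p4\Psi'(r^2)a$; the pointwise comparison follows from \eqref{cdt3} since $b\,\Psi''(t)/\Psi'(t)\leq Cbr^{-1}\Psi'(t)^{\eta}\leq\tfrac p4\Psi'(t)$ for $r$ large, uniformly in $t\geq r^{2}$, because $\Psi'(t)^{1-\eta}\geq\Psi'(0)^{1-\eta}>0$ and $1-\eta>0$. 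The monotonicity $f_r(a)\leq f_r(0)$ then yields $\Psi'(r^2+a)^{b}e^{-\frac p4\Psi(r^2+a)}\leq\Psi'(r^2)^{b}e^{-\frac p4\Psi(r^2)}$, and the remaining factor $e^{-\frac p4\Psi(r^2+a)}$ is converted by convexity into $e^{-\frac p4\Psi(r^2)}e^{-\frac p4\Psi'(r^2)a}$, giving \eqref{qq2}; no case split in $a$ is needed.
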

\begin{proof}
  Set
  $$f_r(x)=\Psi'(r^2+x)^b e^{-\frac{p}{4}\Psi(r^2+x)}\textup{ for } x\geq 0.$$
  From \eqref{cdt3}, we know that there is a constant $C>0$ such that
  \begin{align}\label{q4}
    \Psi''(r^2+x)\leq C(r^2+x)^{-\frac{1}{2}}\Psi'(r^2+x)^{1+\eta}\leq Cr^{-1}\Psi'(r^2+x)^{1+\eta},\textup{ as } r\to \infty.
  \end{align}
  Therefore,
  \begin{align}\label{q5}
    f_r'(x)&=b\Psi'(r^2+x)^{b-1}\Psi''(r^2+x) e^{-\frac{p}{4}\Psi(r^2+x)}-\frac{p}{4}\Psi'(r^2+x)^{b+1}e^{-\frac{p}{4}\Psi(r^2+x)}\notag\\
    &\leq Cr^{-1}\Psi'(r^2+x)^{b+\eta} e^{-\frac{p}{4}\Psi(r^2+x)}-\frac{p}{4}\Psi'(r^2+x)^{b+1}e^{-\frac{p}{4}\Psi(r^2+x)}\notag\\
    &=\Psi'(r^2+x)^{b+\eta}e^{-\frac{p}{4}\Psi(r^2+x)}\left[\frac{C}{r}-\frac{p}{4}\Psi'(r^2+x)^{1-\eta}\right]\notag\\
    &\leq \Psi'(r^2+x)^{b+\eta}e^{-\frac{p}{4}\Psi(r^2+x)}\left[\frac{C}{r}-\frac{p}{4}\Psi'(0)^{1-\eta}\right]\notag\\
    &<0,
  \end{align}
   as $r\to \infty$. It follows that $f_r(x)$ is monotone decreasing. Thus, $f_r(a)\leq f_r(0)$, or equivalently,
  \begin{align}\label{q6}
    \frac{\Psi'(r^2+a)^be^{-\frac{p}{4}\Psi(r^2+a)}}{\Psi'(r^2)^be^{-\frac{p}{4}\Psi(r^2)}}\leq 1\textup{ as } r\to \infty.
  \end{align}
  By this and \eqref{q1}, we obtain
  \begin{align*}
    a^s\Psi'(r^2+a)^b e^{-\frac{p}{2}\Psi(r^2+a)}&\leq \Psi'(r^2+a)^be^{-\frac{p}{4}\Psi(r^2+a)}a^se^{-\frac{p}{4}\Psi'(r^2)a}e^{-\frac{p}{4}\Psi(r^2)}\\
    &=\frac{\Psi'(r^2+a)^be^{-\frac{p}{4}\Psi(r^2+a)}}{\Psi'(r^2)^be^{-\frac{p}{4}\Psi(r^2)}}
    [a\Psi'(r^2)]^se^{-\frac{p}{4}\Psi'(r^2)a}e^{-\frac{p}{2}\Psi(r^2)}\Psi'(r^2)^{b-s}\\
    &\leq [a\Psi'(r^2)]^se^{-\frac{p}{4}\Psi'(r^2)a}e^{-\frac{p}{2}\Psi(r^2)}\Psi'(r^2)^{b-s},
  \end{align*}
  as $r \to \infty$. The proof is finished.
\end{proof}
For $s\geq 0$ and $0<b<\infty$, we define $J(s,b)$ as
$$J(s,b)=J_1(s,b)\cup J_2(s,b)\cup J_3(s,b),$$
where
$$J_1(s,b)=\{(i,j)\in \mathbb{N}\times \mathbb{N}:0\leq s-i-\frac{j}{2}\leq1,b-i+j(\eta-1)>0\},$$
$$J_2(s,b)=\{(i,j)\in \mathbb{N}\times \mathbb{N}:s-i-\frac{j}{2}> 1,b-i+j(\eta-1)\leq 0,j\leq \frac{b-i}{1-\eta}+1\},$$
and
$$J_3(s,b)=\{(i,j)\in \mathbb{N}\times \mathbb{N}: 0\leq s-i-\frac{j}{2}\leq1,b-i+j(\eta-1)\leq 0,j\leq \frac{b-i}{1-\eta}+1\}.$$
\begin{lemma}\label{qt3}
  Let $0\leq r<\infty.$ For fixed $s\geq 0$, $b\in\mathbb{R}$ and $0<p<\infty$, there exists $C=C(s,b,p)>0$ such that
  \begin{align}\label{qq3}
      I_p(r,a,s,b)\leq CC_\Sigma(r,a,s)e^{-\frac{p}{2}\Psi(r^2)}\Psi'(r^2)^{b-s-1},
    \end{align}
    as $r\to \infty$ for all $0\leq a<\infty.$
\end{lemma}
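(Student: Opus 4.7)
The plan is to reduce the case $b>0$ to the already-handled case $b\leq 0$ of Lemma \ref{qt1} via repeated integration by parts, with each step controlled by assumption \eqref{cdt3}. For $b\leq 0$ the conclusion is immediate: since $C(r,a,s)\leq C_\Sigma(r,a,s)$, Lemma \ref{qt1} already delivers \eqref{qq3}. For $b>0$, I will rewrite $\Psi'(r^2+y)e^{-\frac{p}{2}\Psi(r^2+y)}=-\tfrac{2}{p}(d/dy)e^{-\frac{p}{2}\Psi(r^2+y)}$ and integrate by parts with $u=y^s\Psi'(r^2+y)^{b-1}$, obtaining
\begin{align*}
I_p(r,a,s,b)&=\tfrac{2}{p}a^s\Psi'(r^2+a)^{b-1}e^{-\frac{p}{2}\Psi(r^2+a)}\\
&\quad+\tfrac{2s}{p}I_p(r,a,s-1,b-1)+\tfrac{2(b-1)}{p}\int_a^\infty y^s\Psi'(r^2+y)^{b-2}\Psi''(r^2+y)e^{-\frac{p}{2}\Psi(r^2+y)}\,dy.
\end{align*}
The boundary term is controlled by Lemma \ref{qt2} (with $b$ replaced by $b-1$), giving a multiple of $C(r,a,s)\,e^{-\frac{p}{2}\Psi(r^2)}\Psi'(r^2)^{b-s-1}$. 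For the last integral, \eqref{cdt3} yields $\Psi''(r^2+y)\lesssim(r^2+y)^{-1/2}\Psi'(r^2+y)^{1+\eta}\leq y^{-1/2}\Psi'(r^2+y)^{1+\eta}$, dominating it by a constant multiple of $I_p(r,a,s-1/2,b-1+\eta)$.

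Each IBP step thus yields a boundary term of the desired shape together with a \emph{type-1} descendant $I_p(r,a,s-1,b-1)$ and a \emph{type-2} descendant $I_p(r,a,s-1/2,b-1+\eta)$. I will iterate this procedure: after $i$ type-1 and $j$ type-2 reductions in any order, the current integral is $I_p(r,a,s-i-j/2,\,b-i+j(\eta-1))$. I stop a branch as soon as either $b'=b-i+j(\eta-1)\leq 0$ (so that Lemma \ref{qt1} applies) or $s'=s-i-j/2\in[0,1]$ (blocking further meaningful type-1 steps); branches in the latter regime that still have $b'>0$ are continued using type-2 steps only, until $b'$ crosses zero. The three sets $J_1,J_2,J_3$ whose union is $J(s,b)$ enumerate precisely the terminal index pairs: $J_2$ for $b$-termination with $s'>1$, $J_3$ for both constraints binding, and $J_1$ for $s$-termination with $b'>0$. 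The side condition $j\leq(b-i)/(1-\eta)+1$ in $J_2\cup J_3$ encodes that each type-2 step strictly decreases $b$ by $1-\eta>1/2$, forcing $b$ below zero in $O(b/(1-\eta))$ steps; together with $s'\geq 0$ throughout, this makes $J(s,b)$ finite.

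Finally, at each terminal leaf in $J_2\cup J_3$ I invoke Lemma \ref{qt1} to obtain a bound of shape $C(r,a,s')e^{-\frac{p}{2}\Psi(r^2)}\Psi'(r^2)^{b'-s'-1}$; leaves in $J_1$ are first driven to $b'\leq 0$ by continued type-2 iterations (the convergence at $y=a$ is preserved since $s'-1/2\geq -1/2>-1$) and then estimated by Lemma \ref{qt1} as well. A direct computation gives $b'-s'-1=(b-s-1)+j(\eta-1/2)$, and since $\eta<1/2$ together with $\Psi'(r^2)\to\infty$ (so $\Psi'(r^2)\geq 1$ for $r$ large) one has $\Psi'(r^2)^{b'-s'-1}\leq\Psi'(r^2)^{b-s-1}$. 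Each accumulated boundary factor $C(r,a,s')$ has $s'\in[0,s]$, hence $C(r,a,s')\leq C_\Sigma(r,a,s)$. Summing the finitely many terminal and boundary contributions indexed by $J(s,b)$ yields \eqref{qq3}. The main obstacle will be the combinatorial bookkeeping: verifying that the recursion tree is finite, that every intermediate boundary contribution collapses into $C_\Sigma(r,a,s)\,e^{-\frac{p}{2}\Psi(r^2)}\Psi'(r^2)^{b-s-1}$, and that the terminal indices produced by my recursion rule coincide exactly with the elements of $J(s,b)$ as defined.
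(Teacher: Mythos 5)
Your overall route is the same as the paper's: integrate by parts, control the $\Psi''$ term through \eqref{cdt3}, and iterate, stopping a branch once the $\Psi'$-exponent is nonpositive or the power of $y$ lands in $[0,1]$. The boundary terms (via Lemma \ref{qt2} with exponent $b-1$) and the exponent bookkeeping $b'-s'-1=b-s-1+j(\eta-\tfrac12)\le b-s-1$ are handled as in the paper; the appeal to $\Psi'(r^2)\to\infty$ is not justified in general (e.g.\ $\Psi$ linear), but since $\Psi'$ is nondecreasing with $\Psi'(0)>0$ and only finitely many exponent shifts occur, this costs only a constant and is harmless.

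The genuine gap is in how you finish the $J_1$ branches, i.e.\ those with $0\le s'\le 1$ but $b'>0$. You only ever use $\Psi''(r^2+y)\lesssim y^{-1/2}\Psi'(r^2+y)^{1+\eta}$, so every type-2 step lowers the power of $y$ by $1/2$. Driving $b'$ below zero from such a leaf requires on the order of $b'/(1-\eta)$ further type-2 steps, not one, so the power of $y$ drops by about $b'/(2(1-\eta))$ and can become negative, even $\le -1$; your parenthetical ``convergence at $y=a$ is preserved since $s'-1/2\ge-1/2$'' covers only a single extra step. For $a=0$ (one of the two cases of interest) $I_p(r,0,s'',b'')$ with $s''\le -1$ is $+\infty$, so the chain of inequalities becomes vacuous, and even for $-1<s''<0$ the boundary constants $C(r,a,s'')$ generated along the way are no longer dominated by $C_\Sigma(r,a,s)$, since $y^{s'}\le 1+y+y^{s}$ requires $s'\ge 0$. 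This is precisely why the paper treats $s=0$ and $s=1$ first using the other half of \eqref{q8}, namely $\Psi''(r^2+y)\lesssim r^{-1}\Psi'(r^2+y)^{1+\eta}$, which produces the $s$-preserving descendant $I_p(r,a,s,b+\eta-1)$ (see \eqref{q10}--\eqref{q15}), obtains $0<s<1$ by the H\"older interpolation \eqref{q16}, and only then reduces the $J_1$ leaves of the general recursion \eqref{q17} to these low-$s$ cases. To repair your argument you need either this $s$-preserving variant of the type-2 step in the regime $0\le s'\le 1$, or some comparable device (such as the paper's interpolation) that prevents the power of $y$ from ever becoming negative.
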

\begin{proof}
    By Lemma \ref{qt1}, inequality \eqref{qq3} holds for $b\leq 0$. We only need to deal with $0<b<\infty$. Integrating by parts, we have
    \begin{align}\label{q7}
    &\int_{a}^{\infty}y^s\Psi '(r^2+y)^{b}e^{-\frac{p}{2}\Psi(r^2+y)}\,dy\notag\\
    &\simeq -\int_{a}^{\infty}y^s\Psi '(r^2+y)^{b-1}\,de^{-\frac{p}{2}\Psi(r^2+y)}\notag\\
    &\lesssim
    a^s\Psi'(r^2+a)^{b-1} e^{-\frac{p}{2}\Psi(r^2+a)}+
    s\int_{a}^{\infty}y^{s-1}\Psi '(r^2+y)^{b-1}e^{-\frac{p}{2}\Psi(r^2+y)}\,dy\notag\\
    &\quad+\int_{a}^{\infty}y^{s}\Psi '(r^2+y)^{b-2}\Psi ''(r^2+y)e^{-\frac{p}{2}\Psi(r^2+y)}\,dy.
    \end{align}
    By condition \eqref{cdt3}, we see that there is a constant $C>0$ such that
    \begin{align}\label{q8}
      \Psi ''(r^2+y)\leq C(r^2+y)^{-\frac{1}{2}}\Psi '(r^2+y)^{1+\eta}\leq C\min\left\{r^{-1}\Psi '(r^2+y)^{1+\eta},y^{-\frac{1}{2}}\Psi '(r^2+y)^{1+\eta}\right\},
    \end{align}
    as $r\to \infty$. By \eqref{q8}, the last term of \eqref{q7} is estimated as
    \begin{align}\label{q9}
      \int_{a}^{\infty}y^{s}\Psi '(r^2+y)^{b-2}\Psi ''(r^2+y)e^{-\frac{p}{2}\Psi(r^2+y)}\,dy\lesssim \min\{I_p(r,a,s,b+\eta-1),I_p(r,a,s-\frac{1}{2},b+\eta-1)\}.
    \end{align}
    We prove \eqref{qq3} for $0\leq s\leq 1$ first. From \eqref{q7},\eqref{q9} and Lemma \ref{qt2}, we deduce that
    \begin{align}\label{q10}
      I_p(r,a,s,b)\lesssim C(r,a,s)e^{-\frac{p}{2}\Psi(r^2)}\Psi'(r^2)^{b-s-1}+I_p(r,a,s-1,b-1)+I_p(r,a,s,b+\eta-1) \textup{ for } s>0,
    \end{align}
    as $r\to\infty$ and
    \begin{align}\label{q11}
      I_p(r,a,0,b)&\lesssim  C(r,a,0)e^{-\frac{p}{2}\Psi(r^2)}\Psi'(r^2)^{b-1}+I_p(r,a,0,b+\eta-1)\notag\\
      &\lesssim C(r,a,0)e^{-\frac{p}{2}\Psi(r^2)}\Psi'(r^2)^{b-1}
      +C(r,a,0)e^{-\frac{p}{2}\Psi(r^2)}\Psi'(r^2)^{b+\eta-2}
      + I_p(r,a,0,b+2(\eta-1))\notag\\
      &\lesssim C(r,a,0)e^{-\frac{p}{2}\Psi(r^2)}\Psi'(r^2)^{b-1}+I_p(r,a,0,b+2(\eta-1))\notag\\
      &\lesssim C(r,a,0)e^{-\frac{p}{2}\Psi(r^2)}\Psi'(r^2)^{b-1}+I_p(r,a,0,b+j(\eta-1)),
    \end{align}
    as $r\to\infty$ where $j$ is the minimum integer satisfying $b+j(\eta-1)\leq 0.$ This and Lemma \ref{qt1} imply that
    \begin{align}\label{q13}
      I_p(r,a,0,b)&\lesssim C(r,a,0)e^{-\frac{p}{2}\Psi(r^2)}\Psi'(r^2)^{b-1}, \textup{ as }r\to \infty.
    \end{align}
    By \eqref{q10} and \eqref{q13}, we see that
    \begin{align}\label{q14}
      I_p(r,a,1,b)&\lesssim C(r,a,1)e^{-\frac{p}{2}\Psi(r^2)}\Psi'(r^2)^{b-2}+I_p(r,a,0,b-1)+I_p(r,a,1,b+\eta-1)\notag\\
      &\lesssim C_\Sigma(r,a,s)e^{-\frac{p}{2}\Psi(r^2)}\Psi'(r^2)^{b-2}+I_p(r,a,1,b+\eta-1)\notag\\
      &\lesssim C_\Sigma(r,a,s)e^{-\frac{p}{2}\Psi(r^2)}\Psi'(r^2)^{b-2}+I_p(r,a,1,b+j(\eta-1)),
    \end{align}
    where $j$ is the minimum integer satisfying $b+j(\eta-1)\leq 0.$ From this and Lemma \ref{qt1}, we obtain
    \begin{align}\label{q15}
      I_p(r,a,1,b)&\lesssim C_\Sigma(r,a,s)e^{-\frac{p}{2}\Psi(r^2)}\Psi'(r^2)^{b-2}
      +C(r,a,1)e^{-\frac{p}{2}\Psi(r^2)}\Psi'(r^2)^{b-2+j(\eta-1)}\notag\\
      &\lesssim C_\Sigma(r,a,s)e^{-\frac{p}{2}\Psi(r^2)}\Psi'(r^2)^{b-2},
    \end{align}
    as $r\to \infty$. For $0<s<1$, H\"{o}lder's inequality with exponents $\frac{1}{s}$ and $\frac{1}{1-s}$, \eqref{q13} and \eqref{q15} yield
    \begin{align}\label{q16}
      I_p(r,a,s,b)\lesssim I_p(r,a,0,b)^{1-s}I_p(r,a,1,b)^s\lesssim C_\Sigma(r,a,s)e^{-\frac{p}{2}\Psi(r^2)}\Psi'(r^2)^{b-s-1}\textup{ as }r\to \infty.
    \end{align}
    Now, we are able to prove \eqref{qq3} for all $s\geq 0$. By \eqref{q7},\eqref{q9} and Lemma \ref{qt2}, we also have
    \begin{align}\label{q17}
      I_p(r,a,s,b)\lesssim C(r,a,s)e^{-\frac{p}{2}\Psi(r^2)}\Psi'(r^2)^{b-s-1}+I_p(r,a,s-1,b-1)+I_p(r,a,s-\frac{1}{2},b+\eta-1).
    \end{align}
    Notice that, for $i,j\geq 0$,
    $$(b-i+j(\eta-1))-\left(s-i-\frac{j}{2}\right)-1=b+j\left(\eta-\frac{1}{2}\right)-s-1\leq b-s-1.$$
    Using formula \eqref{q17} repeatedly, we obtain terms of the following types
    \begin{align}\label{q18}
    C(r,a,s-i-\frac{j}{2})e^{-\frac{p}{2}\Psi(r^2)}\Psi'(r^2)^{b-s-1},
    \end{align}
    and
    \begin{align}\label{q19}
    I_p(r,a,s-i-\frac{j}{2},b-i+j(\eta-1)),
    \end{align}
    where $i,j\geq 0$ are integers. If a certain term of \eqref{q19} satisfies $0\leq s-i-\frac{j}{2}\leq 1$ or $b-i+j(\eta-1)\leq 0$, then we stop iterating this one. After finite steps, we arrive at
    \begin{align}\label{q20}
      I_p(r,a,s,b)&\lesssim C_\Sigma(r,a,s)e^{-\frac{p}{2}\Psi(r^2)}\Psi'(r^2)^{b-s-1}+\sum_{(i,j)\in J(s,b)}I_p(r,a,s-i-\frac{j}{2},b-i+j(\eta-1)).
    \end{align}
From this, \eqref{q13},\eqref{q15},\eqref{q16} and Lemma \ref{qt1}, we see that \eqref{qq3} holds. The proof is completed.
\end{proof}
For $\zeta\in\mathbb{C}^{n-1}$, set $B_{n-1}(\zeta,a)=\{\xi\in\mathbb{C}^{n-1}:|\xi-\zeta|<a\}$ and $B_{n-1}(\zeta,a)^c:=\mathbb{C}^{n-1}\backslash B_{n-1}(\zeta,a).$
\begin{lemma}\label{qt4}
  Let $n\geq 2$, $0<p<\infty$ and $0\leq k<\infty$, then there exists $C=C(n,k,p)>0$ such that
  \begin{align}\label{qq4}
    \int_{B_{n-1}(0,a)^c}e^{-\frac{p}{2}\Psi(r^2+|\xi|^2)}
    [\Psi'(r^2+|\xi|^2)^{n-1}\Phi'(r^2+|\xi|^2)]^k\,dV_{n-1}(\xi)\leq CL(r,a),
  \end{align}
  as $r\to \infty$ for all $0\leq a<\infty,$ where
  \begin{align*}
    L(r,a)=C_\Sigma(r,a,n-2+\frac{k}{2})
   e^{-\frac{p}{2}\Psi(r^2)}\Psi'(r^2)^{(n-1)(k-1)}
   \Phi'(r^2)^k.
  \end{align*}
\end{lemma}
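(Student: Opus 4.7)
My plan is to reduce the $(n-1)$-dimensional integral over $B_{n-1}(0,a)^c$ to a one-dimensional radial integral via polar coordinates, pull the factor $\Phi'(r^2)^k$ out using a monotonicity argument in the spirit of Lemma \ref{qt2}, and then apply Lemma \ref{qt3} to what remains.

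First I would pass to polar coordinates $\xi=\rho\omega$ in $\mathbb{C}^{n-1}\cong\mathbb{R}^{2(n-1)}$, with $\rho=|\xi|\ge 0$ and $\omega\in S^{2n-3}$. Since the integrand depends only on $|\xi|^2$, the angular integration contributes a constant, and the substitution $y=\rho^2$ (using $\rho^{2n-3}\,d\rho=\tfrac12 y^{n-2}\,dy$) converts the left-hand side of \eqref{qq4} into a constant multiple of
$$\int_{a^2}^{\infty} y^{n-2}\,\Psi'(r^2+y)^{k(n-1)}\,\Phi'(r^2+y)^k\,e^{-\frac{p}{2}\Psi(r^2+y)}\,dy.$$

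Next I would separate the $\Phi'(r^2+y)^k$ factor. Paralleling the proof of Lemma \ref{qt2} but invoking condition \eqref{cdt2} (which controls $\Phi''$) in place of \eqref{cdt3}, I would show that for some small $\delta=\delta(k,p)>0$ and all $r$ sufficiently large, the map $x\mapsto\Phi'(r^2+x)^k e^{-\delta\Psi(r^2+x)}$ is monotone decreasing on $[0,\infty)$. The derivative computation produces a bracket of the form $\tfrac{kC}{r}\Phi'(r^2+x)^{\eta}-\delta\Psi'(r^2+x)$ which must be made nonpositive; this uses $\eta<1/2$, the lower bound $\Psi'\ge \Psi'(0)>0$, and the estimate $\Phi'\le \Psi'+(r^2+x)^{1/2}\Psi'^{1+\eta}$ extracted from \eqref{cdt3}. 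The monotonicity then gives $\Phi'(r^2+y)^k e^{-\delta\Psi(r^2+y)}\le \Phi'(r^2)^k e^{-\delta\Psi(r^2)}$, and the remaining $e^{-(\frac{p}{2}-\delta)\Psi(r^2+y)}$ is left in the integrand, so the one-dimensional integral is at most
$$\Phi'(r^2)^k\,e^{-\delta\Psi(r^2)}\cdot I_{p-2\delta}(r,a^2,n-2,k(n-1)).$$

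Finally I would apply Lemma \ref{qt3} with $s=n-2$ and $b=k(n-1)$ (the factor $p-2\delta$ is comparable to $p$ and so is absorbed in the implicit constants) to bound this last factor by $C_\Sigma(r,a^2,n-2)\,e^{-(\frac{p}{2}-\delta)\Psi(r^2)}\Psi'(r^2)^{k(n-1)-(n-2)-1}$. Assembling everything produces the desired shape $C_\Sigma\cdot e^{-\frac{p}{2}\Psi(r^2)}\Psi'(r^2)^{(n-1)(k-1)}\Phi'(r^2)^k$, matching $L(r,a)$. I expect the main obstacle to be verifying the monotonicity of $\Phi'(r^2+x)^k e^{-\delta\Psi(r^2+x)}$: controlling $\Phi'(r^2+x)^\eta$ relative to $\Psi'(r^2+x)$ uniformly in $x\ge 0$ is delicate because the crude bound from \eqref{cdt3} suggests $\Phi'^\eta/\Psi'$ can grow in $x$, so one may need a finer case split (small $x$ versus large $x$) or to replace the pointwise monotonicity by the weaker but still sufficient pointwise comparison $\Phi'(r^2+x)^k e^{-\delta\Psi(r^2+x)}\le C\,\Phi'(r^2)^k e^{-\delta\Psi(r^2)}$ obtained via the Taylor expansion of $\Phi'$ against itself.
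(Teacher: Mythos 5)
Your first step (polar coordinates and the substitution $y=|\xi|^2$) is the same as the paper's, but from there your route diverges and, as written, has a genuine gap at precisely the point you flag: the monotonicity of $x\mapsto\Phi'(r^2+x)^k e^{-\delta\Psi(r^2+x)}$. The bracket you derive, $\tfrac{Ck}{r}\Phi'(r^2+x)^{\eta}-\delta\Psi'(r^2+x)$, cannot in general be made nonpositive uniformly in $x\ge 0$, because replacing $(r^2+x)^{-1/2}$ by $r^{-1}$ discards exactly the decay you need: \eqref{cdt3} only gives $\Phi'(u)^{\eta}\lesssim u^{\eta/2}\Psi'(u)^{\eta(1+\eta)}$ for $u=r^2+x$, and $r^{-1}u^{\eta/2}$ is unbounded in $x$ when $\Psi'$ stays bounded. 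The repair is simply to keep the factor $(r^2+x)^{-1/2}$ supplied by \eqref{cdt2}: then $u^{-1/2}\Phi'(u)^{\eta}/\Psi'(u)\lesssim u^{(\eta-1)/2}\Psi'(u)^{\eta+\eta^2-1}\lesssim u^{(\eta-1)/2}\to 0$ since $\eta<1/2$ and $\Psi'\ge\Psi'(0)>0$ (the case $\eta\le 0$ is trivial), so for each fixed $\delta>0$ the derivative is $\le 0$ for all $x\ge 0$ once $r$ is large. So the step is salvageable, but your proposal does not close it, and the suggested fallback ("Taylor expansion of $\Phi'$ against itself") is not an argument.

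Even with the monotonicity repaired, your conclusion does not literally give \eqref{qq4}: borrowing $e^{-\delta\Psi}$ forces you to apply Lemma \ref{qt3} at exponent $p-2\delta$, so the prefactor you obtain is $C_\Sigma$ built with $p-2\delta$ in place of $p$ (e.g. $e^{-\frac{p-2\delta}{4}a\Psi'(r^2)}$), which in the regime $a\Psi'(r^2)\to\infty$ is strictly larger than the factor $C_\Sigma(r,a,n-2+\frac{k}{2})$ appearing in $L(r,a)$; your bound is still $o(1)$ there, hence would suffice for the use in Lemma \ref{qt5}, but it is weaker than the stated inequality (likewise your lower limit $a^2$ versus the statement's $a$ is a bookkeeping mismatch, though that one is inherited from the paper). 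The paper proceeds differently and avoids both issues: it uses \eqref{cdt3} (see \eqref{exq19}) to bound $\Phi'(r^2+y)^k\lesssim (r+y^{1/2})^k\Psi'(r^2+y)^{(1+\eta)k}+\Psi'(r^2+y)^k$, splits the radial integral into $r^kI_p(r,a,n-2,(n+\eta)k)$, $I_p(r,a,n-2+\frac{k}{2},(n+\eta)k)$ and $I_p(r,a,n-2,nk)$, applies Lemma \ref{qt3} to each at the full exponent $p$ (this is where the index $n-2+\frac{k}{2}$ in $L(r,a)$ comes from), and then recombines $r^k\Psi'(r^2)^{(1+\eta)k}+\Psi'(r^2)^k$ into $\Phi'(r^2)^k$. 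If you want your route to yield \eqref{qq4} exactly, you would need to switch to this expansion at the last step rather than trade away part of the Gaussian-type weight.
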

\begin{proof}
  Using spherical coordinates, we obtain
  \begin{align}\label{exq18}
    &\int_{B_{n-1}(0,a)^c}e^{-\frac{p}{2}\Psi(r^2+|\xi|^2)}
    [\Psi'(r^2+|\xi|^2)^{n-1}\Phi'(r^2+|\xi|^2)]^k\,dV_{n-1}(\xi)\notag\\
    &\simeq \int_{a^2}^{\infty}y^{2n-3}e^{-\frac{p}{2}\Psi(r^2+y^2)}
    [\Psi'(r^2+y^2)^{n-1}\Phi'(r^2+y^2)]^k\,dy\notag\\
    &\simeq \int_{a}^{\infty}y^{n-2}e^{-\frac{p}{2}\Psi(r^2+y)}
    [\Psi'(r^2+y)^{n-1}\Phi'(r^2+y)]^k\,dy.
  \end{align}
  By condition \eqref{cdt3}, we have
  \begin{align}\label{exq19}
    \Phi'(r^2+y)\simeq (r^2+y)^{\frac{1}{2}}\Psi '(r^2+y)^{1+\eta}+\Psi '(r^2+y)\simeq (r+y^{\frac{1}{2}})\Psi '(r^2+y)^{1+\eta}+\Psi '(r^2+y),
  \end{align}
  as $r\to \infty.$ This, together with \eqref{exq18} and Lemma \ref{qt3}, implies that
  \begin{align*}
   &\int_{B_{n-1}(0,a)^c}e^{-\frac{p}{2}\Psi(r^2+|\xi|^2)}[\Psi'(r^2+|\xi|^2)^{n-1}\Phi'(r^2+|\xi|^2)]^k\,dV_{n-1}(\xi)\notag\\
   &\simeq r^kI_p(r,a,n-2,(n+\eta)k)+I_p(r,a,n-2+\frac{k}{2},(n+\eta)k)
   +I_p(r,a,n-2,nk)\notag\\
   &\lesssim C_\Sigma(r,a,n-2)r^ke^{-\frac{p}{2}\Psi(r^2)}\Psi'(r^2)^{(n+\eta)k-n+1}
   +C_\Sigma(r,a,n-2+\frac{k}{2})e^{-\frac{p}{2}\Psi(r^2)}\Psi'(r^2)^{(n+\eta)k-n+1-\frac{k}{2}}\notag\\
   &\quad+C_\Sigma(r,a,n-2)e^{-\frac{p}{2}\Psi(r^2)}\Psi'(r^2)^{nk-n+1}\notag\\
   &\lesssim C_\Sigma(r,a,n-2+\frac{k}{2})e^{-\frac{p}{2}\Psi(r^2)}\Psi'(r^2)^{(n-1)k-n+1}
   \left[r^k\Psi'(r^2)^{(1+\eta)k}+
   \Psi'(r^2)^{k}\right]\notag\\
   &\simeq C_\Sigma(r,a,n-2+\frac{k}{2})
   e^{-\frac{p}{2}\Psi(r^2)}\Psi'(r^2)^{(n-1)(k-1)}
   \Phi'(r^2)^k,
  \end{align*}
  as $r\to \infty.$ The proof is finished.
\end{proof}
\begin{lemma}\label{qt5}
    Let $0<p<\infty$ and $0\leq k<\infty$.\\
\textup{(A)} There holds
  \begin{align}\label{qq5}
    \int_{\mathbb{C}^n}|K(z,w)|^pe^{-\frac{p}{2}\Psi (|w|^2)}[\Phi '(|w|^2)\Psi '(|w|^2)^{n-1}]^kdV(w)\lesssim e^{\frac{p}{2}\Psi(|z|^2)}[\Phi '(|z|^2)\Psi '(|z|^2)^{n-1}]^{p-1+k}.
\end{align}
\textup{(B)} There holds
\begin{align}\label{qq6}
    \int_{D(z,R)^c}|K(z,w)|^pe^{-\frac{p}{2}\Psi (|w|^2)}[\Phi '(|w|^2)\Psi '(|w|^2)^{n-1}]^kdV(w)\lesssim o(1)e^{\frac{p}{2}\Psi(|z|^2)}[\Phi '(|z|^2)\Psi '(|z|^2)^{n-1}]^{p-1+k},
\end{align}
as $R\to \infty,$  uniformly in $z\in \mathbb{C}^n$.
\end{lemma}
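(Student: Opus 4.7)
The plan is to combine the pointwise kernel estimate of Lemma \ref{2p2} with the scalar integral estimates of Lemmas \ref{qt3} and \ref{qt4}. Parts (A) and (B) are handled in parallel, the only difference being the values of the parameter $a\geq 0$ used in those lemmas.

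First I would apply a unitary transformation so that $z=(|z|,0,\ldots,0)$, decompose $w=(w_1,\xi)$ with $w_1\in\mathbb{C}$ and $\xi\in\mathbb{C}^{n-1}$, and parameterize $w_1=te^{-i\theta}$. Then $\langle z,w\rangle=|z|te^{i\theta}$, $|w|^2=t^2+|\xi|^2$, and $dV(w)=t\,dt\,d\theta\,dV_{n-1}(\xi)$. The key observation is that the upper bounds on $|K(z,w)|$ in Lemma \ref{2p2} depend only on $r=|z|t$ and $\theta$, not on $\xi$, so the $\xi$-integration decouples from the kernel estimate. Splitting the angular range into $|\theta|\leq\theta_0(|z|t)$ and $|\theta|>\theta_0(|z|t)$ and inserting \eqref{2e5} in each, one obtains
$$\int_{-\pi}^{\pi}|K(z,w)|^p\,d\theta\lesssim (|z|t)^{-1/2}\Phi'(|z|t)^{p-1/2}\Psi'(|z|t)^{p(n-1)}e^{p\Psi(|z|t)}$$
for $p>1/3$, and an analogous bound with the same $e^{p\Psi(|z|t)}$ factor when $p\leq 1/3$ (the second angular regime then supplies the dominant estimate).

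Second, I would carry out the $\xi$-integration via Lemma \ref{qt4} with $r=t$ and $a=0$ for part (A), or with $a=R\Psi'(|z|^2)^{-1/2}$ for the ``tangential'' subregion of $D(z,R)^c$ in part (B). The remaining one-variable integral in $t$ takes the schematic form
$$|z|^{-1/2}\int_0^{\infty}t^{1/2}\Phi'(|z|t)^{p-1/2}\Psi'(|z|t)^{p(n-1)}e^{p\Psi(|z|t)-\frac{p}{2}\Psi(t^2)}\Psi'(t^2)^{(n-1)(k-1)}\Phi'(t^2)^k\,dt.$$
After the substitution $y=t^2$ this becomes a finite sum of integrals of the type $I_p(|z|,a',s,b)$ controlled by Lemma \ref{qt3}. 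The convexity-plus-monotonicity inequality $\Psi(|z|t)\leq\tfrac{1}{2}[\Psi(|z|^2)+\Psi(t^2)]$ (AM--GM followed by convexity of $\Psi$) separates the exponential into $e^{\frac{p}{2}\Psi(|z|^2)}$ times a nonpositive ``Gaussian cushion'' around $t=|z|$, and matching the surviving powers of $\Phi'$ and $\Psi'$ through $\Phi'(x)\simeq x\Psi'(x)^{1+\eta}+\Psi'(x)$ (see \eqref{exq19}) produces the target $e^{\frac{p}{2}\Psi(|z|^2)}[\Phi'(|z|^2)\Psi'(|z|^2)^{n-1}]^{p-1+k}$.

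For part (B), the region $D(z,R)^c$ splits as $\{|\xi|>R\Psi'(|z|^2)^{-1/2}\}\cup\{||z|-w_1|>R\Phi'(|z|^2)^{-1/2}\}$. The first set is handled by Lemma \ref{qt4} with $a=R\Psi'(|z|^2)^{-1/2}$; since $\Psi'(|z|^2)\geq\Psi'(0)>0$, we have $a\Psi'(|z|^2)\geq R\Psi'(0)^{1/2}\to\infty$ uniformly in $z$, and both ingredients of $C_\Sigma$---the factor $[a\Psi'(r^2)]^s e^{-\frac{p}{4}\Psi'(r^2)a}$ and the Gamma tail $\int_{\frac{p}{2}\Psi'(r^2)a}^{\infty}u^s e^{-u}du$---tend to $0$. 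The second set yields the analogous decay either by restricting the angular integration to $|\theta|\gtrsim R\theta_0(|z|^2)$ and exploiting the $|\theta|^{-3p}$ tail in \eqref{2e5}, or by applying Lemma \ref{qt3} to the $t$-integral with a suitable positive $a$; this delivers the $o(1)$ claimed in \eqref{qq6}.

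The principal technical obstacle will be applying the convexity bound $\Psi(|z|t)\leq\tfrac{1}{2}[\Psi(|z|^2)+\Psi(t^2)]$ without squandering the exponential decay in $t$; a naive use collapses the Gaussian entirely and destroys integrability, so one must preserve the negative difference $\Psi(|z|t)-\tfrac{1}{2}[\Psi(|z|^2)+\Psi(t^2)]\leq 0$ as a quantitative decay matching the $C_\Sigma$ factors supplied by Lemma \ref{qt3}. A second subtlety is the uniformity in $z$ of the $o(1)$ estimate in part (B), for which the strict positivity $\Psi'(0)>0$ is essential.
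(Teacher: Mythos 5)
Your outline follows the paper's strategy up to a point (reduce to $z=(x,0,\dots,0)$, apply the pointwise bound of Lemma \ref{2p2}, integrate out the angle, and use Lemma \ref{qt4} for the $\xi$-variables with $a=0$ in part (A) and $a=R\Psi'(x^2)^{-1/2}$ for the tangential piece in part (B)), but the decisive step — the remaining radial integral — is not actually handled, and the mechanism you propose for it does not work. After the angular and $\xi$ integrations one is left with $\int_0^\infty S_x(r)e^{-Q_x(r)}r\,dr$, where $Q_x(r)=\frac{p}{2}\Psi(r^2)-p\Psi(xr)+\frac{p}{2}\Psi(x^2)$. Your convexity inequality only gives $Q_x\geq 0$, which, as you yourself note, destroys integrability; and your claim that after $y=t^2$ the integral "becomes a finite sum of integrals of the type $I_p$ controlled by Lemma \ref{qt3}" is false: the integrals \eqref{q7} treated in Lemma \ref{qt3} have a purely decaying factor $e^{-\frac{p}{2}\Psi(r^2+y)}$ and cannot absorb the competing factor $e^{p\Psi(xt)}$; what is needed is Laplace-type control of the kernel $e^{-Q_x(r)}$ peaked at $r=x$ (locally $Q_x(r)=\frac{1+o(1)}{2}p\Phi'(x^2)(r-x)^2$, see \eqref{j8}), uniformly in $x$. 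The paper gets exactly this from \cite[Lemma 3.6]{SY} and the verification of conditions (I), (II), (III) of \cite[p.177]{SY}; your proposal names the obstacle ("preserve the negative difference as quantitative decay") but supplies no substitute for this ingredient, so the proof does not close.

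There are also gaps in part (B). The set $\{\,|x-w_1|>R\Phi'(x^2)^{-\frac{1}{2}}\,\}$ is not reducible to "angle large": points with small $|\theta|$ but $|x-r|$ large belong to it, which is why the paper splits it into $E_1$ (radially far), $E_3$ (an intermediate shell handled by the Gaussian approximation of $Q_x$), and $E_2$ (angularly far, justified by the implication \eqref{j1}), plus a separate treatment of bounded $x$ via $E_5,E_6$ to get uniformity. Finally, your uniformity argument for the tangential piece conflates $\Psi'(|z|^2)$ with $\Psi'(r^2)$: the factor produced by Lemma \ref{qt4} is $C_\Sigma(r,R\Psi'(x^2)^{-1/2},\cdot)$ in the radial variable $r$, and to make it $o(1)$ uniformly one needs $\Psi'(r^2)\gtrsim\Psi'(x^2)$ near the peak (the nontrivial estimate \eqref{k4}), combined with condition (III) away from the peak; the inequality $\Psi'(|z|^2)\geq\Psi'(0)$ alone does not give this when $\Psi'(x^2)\to\infty$.
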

\begin{proof}
  Without loss of generality, we may assume that $z=(x,0,\cdots,0)$ with $x>0$. Write $w=(w_1,\xi)$ with $\xi\in \mathbb{C}^{n-1}$ and $w_1=re^{i\theta}$ when $n>1$. Define $h:\mathbb{C}\to \mathbb{R}$ as
  \begin{equation*}
  h(te^{i\theta})=\left\{
    \begin{aligned}
      \Phi '(t),\qquad\qquad\qquad\quad &|\theta|\leq \theta_0(t),\\
      t^{-3/2}[\Phi '(t)]^{-1/2}|\theta|^{-3},\quad&|\theta|> \theta_0(t),
    \end{aligned}
  \right.
\end{equation*}
where $\theta_0(t)=[t\Phi'(t)]^{-\frac{1}{2}}$. By Lemma \ref{2p2}, we have
\begin{align}\label{q21}
  H(z,w):&=|K(z,w)|^pe^{-\frac{p}{2}\Psi (|w|^2)}[\Phi '(|w|^2)\Psi '(|w|^2)^{n-1}]^k\notag\\
  &\lesssim
  [\Psi '(xr)]^{p(n-1)}e^{p\Psi(xr)}h(xre^{i\theta})^pe^{-\frac{p}{2}\Psi (r^2+|\xi|^2)}[\Phi '(r^2+|\xi|^2)\Psi '(r^2+|\xi|^2)^{n-1}]^k.
\end{align}
We integrate the left side of the last inequality over $\mathbb{C}^n=\mathbb{C}\times \mathbb{C}^{n-1}$ by Fubini's theorem and then estimate the integral with respect to the vector $w_1$ over $\mathbb{C}$ by considering polar coordinates $\theta$ and $r$. Notice that
\begin{align}\label{exq21}
\int_{-\pi}^{\pi}h(xre^{i\theta})^pd\theta&=\int_{-\theta_0(xr)}^{\theta_0(xr)}[\Phi '(xr)]^pd\theta+2\int_{\theta_0(xr)}^{\pi}(xr)^{-\frac{3p}{2}}[\Phi '(xr)]^{-\frac{p}{2}}\theta^{-3p}d\theta.
\end{align}
We deal with the case $\frac{1}{3}<p<\infty$ first. By elementary calculations, we have
\begin{align}\label{q22}
\int_{-\pi}^{\pi}h(xre^{i\theta})^pd\theta\lesssim
(xr)^{-\frac{1}{2}}[\Phi '(xr)]^{\frac{2p-1}{2}}\textup{ for }\frac{1}{3}<p<\infty.
\end{align}
By Lemma \ref{qt4}, \eqref{q21} and \eqref{q22}, we have
\begin{align}\label{q23}
\int_{\mathbb{C}^n}H(z,w)\,dV(w)\lesssim \int_{0}^{\infty}L(r,0)[\Psi '(xr)]^{p(n-1)}e^{p\Psi(xr)}(xr)^{-\frac{1}{2}}[\Phi '(xr)]^{\frac{2p-1}{2}}r\,dr,
\end{align}
where $L(r,0)$ appears in Lemma \ref{qt4}.
It suffices to show that
\begin{align}\label{q24}
\sup_{x>0}\int_{0}^{\infty}L(r,0)[\Psi '(xr)]^{p(n-1)}e^{p\Psi(xr)}(xr)^{-\frac{1}{2}}[\Phi '(xr)]^{\frac{2p-1}{2}}e^{-\frac{p}{2}\Psi(x^2)}[\Psi '(x^2)^{n-1}\Phi'(x^2)]^{-p+1-k}r\,dr<\infty,
\end{align}
or equivalently,
\begin{align}\label{q25}
\sup_{x>0}\int_{0}^{\infty}S_x(r)e^{-Q_x(r)}r
\,dr<\infty,
\end{align}
where
\begin{align*}
  S_x(r)&=\Phi'(r^2)^k\Psi'(r^2)^{(n-1)(k-1)}
\Psi'(xr)^{p(n-1)}(xr)^{-\frac{1}{2}}\Phi'(xr)^{(p-1+k)-k+\frac{1}{2}}\Psi'(x^2)^{(n-1)(1-p-k)}\Phi'(x^2)^{1-p-k}\\
&= \left[\frac{\Phi'(r^2)}{\Phi'(xr)}\right]^{k}
\left[\frac{\Phi'(xr)}{\Phi'(x^2)}\right]^{p-1+k}
\left[\frac{\Phi'(xr)}{xr}\right]^{\frac{1}{2}}
\left[\frac{\Psi'(xr)}{\Psi'(r^2)}\right]^{(n-1)(1-k)}
\left[\frac{\Psi'(xr)}{\Psi'(x^2)}\right]^{(p-1+k)(n-1)},
\end{align*}
and
$$Q_x(r)=\frac{p}{2}\Psi(r^2)-p\Psi(xr)+\frac{p}{2}\Psi(x^2).$$
By \cite[Lemma 3.6]{SY} and a straightforward argument, we find that $S_x(r)e^{-Q_x(r)}$ satisfies conditions (I), (II) and (III) of \cite[p.177]{SY}(with $x=t$, $Q_x=g_t$, $x_0=x$, $\tau=\Phi'(x^2)^{-\alpha}$, $c=p\Phi'(x^2)$ and $m=0$). Hence,
%
%
%
$$\sup_{x>0}\int_{0}^{\infty}S_x(r)e^{-Q_x(r)}r\,dr\leq C.$$
Therefore, assertion (A) holds. It remains to prove (B). If $\Psi ''(x)\equiv 0$, then the result follows immediately from the theory of the classical Fock space. We suppose $\Psi''(0)>0$. If $w \in D(z,R)^c$, then $|x-re^{i\theta}|>R[\Phi '(x^2)]^{-\frac{1}{2}}$ or $|\xi|>R[\Psi '(x^2)]^{-\frac{1}{2}}.$ Assume that $x$ is sufficiently large. We assert that if $|x-re^{i\theta}|>R[\Phi '(x^2)]^{-\frac{1}{2}}$ and $|x-r|\leq\min\{\frac{R}{2}\Phi '(x^2)^{-\frac{1}{2}},\Phi'(x^2)^{-\alpha}\}$ where $\max\{\eta,0\}<\alpha<\frac{1}{2}$, then
\begin{align}\label{j1}
  |\theta|\geq \frac{R}{8}\theta_0(xr).
\end{align}
In fact, by the triangle inequality, we have
\begin{align}\label{j2}
  |x-xe^{i\theta}|\geq |x-re^{i\theta}|-|x-r|>\frac{R}{2}\Phi '(x^2)^{-\frac{1}{2}}.
\end{align}
It follows that
\begin{align}\label{j3}
|\theta|\geq |1-e^{i\theta}|>\frac{R}{2x}\Phi '(x^2)^{-\frac{1}{2}}.
\end{align}
Notice that
\begin{align}\label{j4}
|xr-x^2|\leq x\Phi '(x^2)^{-\alpha},
\end{align}
which, together with Lemma 3.2 of \cite{SY} implies that
\begin{align}\label{j5}
\Phi '(x^2)^{-\frac{1}{2}}\geq \frac{1}{2}\Phi '(xr)^{-\frac{1}{2}}.
\end{align}
It is easy to see that
\begin{align}\label{k1}
\frac{1}{r+\Phi'(x^2)^{-\alpha}}\geq \frac{1-\Phi'(x^2)^{-\alpha}}{r}&\iff r\geq [1-\Phi'(x^2)^{-\alpha}][r+\Phi'(x^2)^{-\alpha}]\notag\\
&\iff 0\geq -\Phi'(x^2)^{-\alpha}[r-1+\Phi'(x^2)^{-\alpha}]\notag\\
&\iff r\geq 1-\Phi'(x^2)^{-\alpha}.
\end{align}
Notice that
\begin{align}\label{k2}
  r\geq x-\Phi'(x^2)^{-\alpha}\geq 1-\Phi'(x^2)^{-\alpha}\textup{ if } |x-r|\leq \Phi'(x^2)^{-\alpha} \textup{ and }x\geq 1.
\end{align}
We also have
\begin{align}\label{k3}
  \Phi'(x^2)=\Psi'(x^2)+x^2\Psi''(x^2) \to \infty\textup{ as } x\to \infty.
\end{align}
It follows from \eqref{k1}, \eqref{k2} and \eqref{k3} that
\begin{align}\label{j6}
\frac{1}{x}\geq \frac{1}{r+\Phi'(x^2)^{-\alpha}}\geq  \frac{1-\Phi'(x^2)^{-\alpha}}{r}\geq \frac{1}{4r}\textup{ for } |x-r|\leq \Phi'(x^2)^{-\alpha}.
\end{align}
By this, \eqref{j3} and \eqref{j5}, we infer that inequality \eqref{j1} holds. Set
$$E_1(x)=\{w=(w_1,\xi)\in \mathbb{C}^n: w_1=re^{i\theta}\in\mathbb{C}, |x-r|\geq \Phi '(x^2)^{-\alpha}\},$$
$$E_2(x,R)=\{w=(w_1,\xi)\in \mathbb{C}^n: w_1=re^{i\theta}\in\mathbb{C}, |\theta|\geq\frac{R}{8}\theta_0(xr)\},$$
$$E_3(x,R)=\{w=(w_1,\xi)\in \mathbb{C}^n: w_1=re^{i\theta}\in\mathbb{C}, \frac{R}{2}\Phi'(x^2)^{-\frac{1}{2}}\leq |x-r|\leq \Phi '(x^2)^{-\alpha}\},$$
and
$$E_4(x,R)=\{w=(w_1,\xi)\in \mathbb{C}^n: w_1\in\mathbb{C},|\xi|>R\Psi'(x^2)^{-\frac{1}{2}}\}.$$
By \eqref{j1}, we see that
\begin{align}\label{j6a}
  D(z,R)^c\subseteq E_1(x)\cup E_2(x,R)\cup E_3(x,R)\cup E_4(x,R)\textup{ for }R\leq 2\Phi'(x^2)^{\frac{1}{2}-\alpha},
\end{align}
and
\begin{align}\label{j6b}
D(z,R)^c\subseteq E_1(x)\cup E_2(x,R)\cup E_4(x,R)\textup{ for }R > 2\Phi'(x^2)^{\frac{1}{2}-\alpha}.
\end{align}
From condition (III) of \cite[p.177]{SY}, we know that, for every $\varepsilon>0$, there is a constant $M>0$ such that for $R,x\geq M$,
\begin{align}\label{q29}
    \frac{\int_{E_1(x)}H(z,w)dV(w)}{e^{\frac{p}{2}\Psi(|z|^2)}[\Phi '(|z|^2)\Psi '(|z|^2)^{n-1}]^{p-1+k}} <\frac{\varepsilon}{4}.
\end{align}
For $w\in E_2(x,R),$ there holds
\begin{align}\label{exq29}
\int_{\frac{R}{8}\theta_0(xr)}^{\pi}h(xre^{i\theta})^pd\theta\lesssim
 R^{-3p+1}(xr)^{-\frac{1}{2}}\Phi '(xr)^{p-\frac{1}{2}}.
\end{align}
By checking the proof of (A), we conclude that for $x\geq M$, there is a constant $C_2>0$ such that
\begin{align*}
    \frac{\int_{E_2(x,R)}H(z,w)dV(w)}{e^{\frac{p}{2}\Psi(|z|^2)}[\Phi '(|z|^2)\Psi '(|z|^2)^{n-1}]^{p-1+k}} \leq C_2R^{-3p+1},
\end{align*}
or say that there is a constant $$R_2=\left(\frac{\varepsilon}{4C_2}\right)^{\frac{1}{-3p+1}}+1$$
such that for $x\geq M$ and $R\geq R_2$,
\begin{align}\label{q30}
    \frac{\int_{E_2(x,R)}H(z,w)dV(w)}{e^{\frac{p}{2}\Psi(|z|^2)}[\Phi '(|z|^2)\Psi '(|z|^2)^{n-1}]^{p-1+k}} <\frac{\varepsilon}{4}.
\end{align}
By Lemma 3.6 of \cite{SY} and fundamental calculus, we know that, for $r$ with $|x-r|\leq \Phi'(x^2)^{-\alpha}$
\begin{align}\label{j8}
  Q_x(r)=\frac{1+o(1)}{2}p\Phi'(x^2)(r-x)^2\textup{ as }x\to \infty.
\end{align}
This, together with condition (II) of \cite[p.177]{SY}, further implies that there is an integer $N\geq 0$ such that
\begin{align*}
    \frac{\int_{E_3(x,R)}H(z,w)dV(w)}{e^{\frac{p}{2}\Psi(|z|^2)}[\Phi '(|z|^2)\Psi '(|z|^2)^{n-1}]^{p-1+k}} &\lesssim \int\limits_{\frac{R}{2}\Phi'(x^2)^{-\frac{1}{2}}\leq |x-r|\leq \Phi '(x^2)^{-\alpha}}|r-x|^Ne^{-\frac{p}{3}\Phi'(x^2)(r-x)^2}\,dr\notag\\
    &\simeq \int_{\frac{R}{2}\Phi'(x^2)^{-\frac{1}{2}}}^{\Phi '(x^2)^{-\alpha}}t^Ne^{-\frac{p}{3}\Phi'(x^2)t^2}\,dt\notag\\
    &=\int_{\frac{R}{2}}^{\Phi '(x^2)^{\frac{1}{2}-\alpha}}u^N\Phi'(x^2)^{-\frac{N+1}{2}}e^{-\frac{p}{3}u^2}\,du\notag\\
    &\lesssim \int_{\frac{R^2}{4}}^{\infty} r^{\frac{N-1}{2}}e^{-\frac{p}{3}r}\,dr\to 0 \textup{ as }R\to\infty.
\end{align*}
Notice that $E_3(x,R)=\emptyset$ when $R > 2\Phi'(x^2)^{\frac{1}{2}-\alpha}$. There is a constant $R_3>0$
such that for $x\geq M$ and $R\geq R_3$,
\begin{align}\label{q30a}
    \frac{\int_{E_3(x,R)}H(z,w)dV(w)}{e^{\frac{p}{2}\Psi(|z|^2)}[\Phi '(|z|^2)\Psi '(|z|^2)^{n-1}]^{p-1+k}} <\frac{\varepsilon}{4}.
\end{align}
When estimating the integral over $E_4(x,R)$, we will use that
\begin{align}\label{k4}
  \Psi'(r^2)\geq \frac{1}{2}\Psi'(x^2)\textup{ for } x\geq M \textup{ and }|r-x|\leq \Phi'(x^2)^{-\alpha}.
\end{align}
Indeed, it is obvious that, if $r\geq x$, then $\Psi'(r^2)\geq \Psi'(x^2)\geq \frac{1}{2}\Psi'(x^2)$. When $r<x$, by \eqref{cdt3}, we have
\begin{align}\label{k5}
  \Psi'(x^2)-\Psi'(r^2)\leq \Psi''(x^2)(x^2-r^2)\leq Cx^{-1}\Psi'(x^2)^{1+\eta}(x^2-r^2),
\end{align}
or equivalently,
\begin{align}\label{k6}
  \frac{\Psi'(r^2)}{\Psi'(x^2)}\geq 1-Cx^{-1}\Psi'(x^2)^{\eta}(x^2-r^2).
\end{align}
Since $|x-r|\leq \Phi'(x^2)^{-\alpha}$, we have
\begin{align}\label{k6n}
  \left|\frac{r}{x}-1\right|\leq x^{-1}\Phi'(x^2)^{-\alpha}\to 0 \textup{ as }x\to\infty,
\end{align}
which is equivalent to
\begin{align}\label{k7}
 r=(1+o(1))x,  \textup{ as }x\to\infty.
\end{align}
It follows that
\begin{align}\label{k8}
 x^2-r^2=(x-r)(x+r)\leq \Phi'(x^2)^{-\alpha}(2+o(1))x \textup{ as }x\to\infty.
\end{align}
By \eqref{k6} and \eqref{k8}, we see that
\begin{align}\label{k9}
 \frac{\Psi'(r^2)}{\Psi'(x^2)}\geq 1-\frac{C\Psi'(x^2)^{\eta}}{\Phi'(x^2)^{\alpha}}(2+o(1)).
\end{align}
If $\Psi'(x^2)\to \infty $ as $x\to \infty$, then
\begin{align}\label{k10}
 \frac{\Psi'(x^2)^{\eta}}{\Phi'(x^2)^{\alpha}}\leq \Psi'(x^2)^{\eta-\alpha}\to 0 \textup{ as } x\to \infty.
\end{align}
If $\Psi'(x^2)$ is bounded, then we have
\begin{align}\label{k11}
 \frac{\Psi'(x^2)^{\eta}}{\Phi'(x^2)^{\alpha}}&= \frac{\Psi'(x^2)^{\eta}}{[\Psi'(x^2)+x^2\Psi''(x^2)]^{\alpha}}\leq  \frac{\Psi'(x^2)^{\eta}}{[\Psi'(x^2)+Cx\Psi'(x^2)^{1+\eta}]^{\alpha}}\lesssim  x^{-\alpha}\Psi'(x^2)^{-(\alpha-\eta)(1+\eta)-\eta^2}\to 0,
\end{align}
as $x\to \infty$. From \eqref{k9}, \eqref{k10} and \eqref{k11}, we see that inequality \eqref{k4} holds. Therefore, for $x\geq M$,
\begin{align}\label{q31}
  C(r,R[\Psi '(x^2)]^{-\frac{1}{2}},s)&\lesssim
  [R\Psi'(x^2)^{\frac{1}{2}}]^se^{-\frac{p}{8}R\Psi'(x^2)^{1/2}}+
  \int_{\frac{p}{4}\Psi'(x^2)^{\frac{1}{2}}R}^{\infty}u^{s}e^{-u}\,du\notag\\
  &\lesssim
  [R\Psi'(0)^{\frac{1}{2}}]^se^{-\frac{p}{8}R\Psi'(0)^{1/2}}+
  \int_{\frac{p}{4}\Psi'(0)^{\frac{1}{2}}R}^{\infty}u^{s}e^{-u}\,du\notag\\
  &=o(1),\textup{ for all } s\geq 0\textup{ as } R\to \infty.
\end{align}
On the other hand, we have
\begin{align}\label{q33}
\int_{E_4(x,R)}H(z,w)\,dV(w)\lesssim \int_{0}^{\infty}L(r,R[\Psi '(x^2)]^{-\frac{1}{2}})[\Psi '(xr)]^{p(n-1)}e^{p\Psi(xr)}(xr)^{-\frac{1}{2}}[\Phi '(xr)]^{\frac{2p-1}{2}}r\,dr,
\end{align}
which is analogous to \eqref{q23}. Using \eqref{q31} and condition (III) of \cite[p.177]{SY}  and considering the additional coefficient $C_\Sigma(r,R[\Psi '(x^2)]^{-\frac{1}{2}},n-2+\frac{k}{2})$ in $L(r,R[\Psi '(x^2)]^{-\frac{1}{2}})$, we see that for $x\geq M$,
\begin{align*}
    \frac{\int_{E_4(x,R)}H(z,w)dV(w)}{e^{\frac{p}{2}\Psi(|z|^2)}[\Phi '(|z|^2)\Psi '(|z|^2)^{n-1}]^{p-1+k}}
    \leq Co(1)\int_{|r-x|\leq \Phi'(x^2)^{-\alpha}}S_x(r)e^{-Q_x(r)}rdr\leq Co(1)\textup{ as }R\to \infty,
\end{align*}
which implies that there is a constant $R_4$ such that for $x\geq M$ and $R\geq R_4$,
\begin{align}\label{q34}
    \frac{\int_{E_4(x,R)}H(z,w)dV(w)}{e^{\frac{p}{2}\Psi(|z|^2)}[\Phi '(|z|^2)\Psi '(|z|^2)^{n-1}]^{p-1+k}}< \frac{\varepsilon}{4}.
\end{align}
From \eqref{j6a},\eqref{q29},\eqref{q30},\eqref{q30a} and \eqref{q34}, we know that for $x\geq M$ and $\max\{M,R_2,R_3,R_4\}\leq R \leq 2\Phi'(x^2)^{\frac{1}{2}-\alpha}$,
\begin{align}\label{q37}
    \frac{\int_{D(z,R)^c}H(z,w)dV(w)}{e^{\frac{p}{2}\Psi(|z|^2)}[\Phi '(|z|^2)\Psi '(|z|^2)^{n-1}]^{p-1+k}}< \varepsilon.
\end{align}
By \eqref{j6b},\eqref{q29},\eqref{q30} and \eqref{q34}, we know that for $x\geq M$ and $R> \max\{M,R_2,R_4,2\Phi'(x^2)^{\frac{1}{2}-\alpha}\}$, inequality \eqref{q37} holds. Therefore, for $x\geq M$ and $R>\max\{M,R_2,R_3,R_4\}$, inequality \eqref{q37} holds.
Now, we consider the situation of $0\leq x\leq M.$ Notice that for $w \in D(z,R)^c$, we have $|x-re^{i\theta}|>R[\Phi '(x^2)]^{-\frac{1}{2}}$ or $|\xi|>R[\Psi '(x^2)]^{-\frac{1}{2}}.$ Thus,
$$r>R\Phi'(M^2)^{-\frac{1}{2}}-M,\textup{ or } |\xi|>R\Psi'(M^2)^{-\frac{1}{2}}.$$
Set
$$E_5(x,t)=\{w=(w_1,\xi)\in \mathbb{C}^n: w_1=re^{i\theta}\in\mathbb{C}, r>t\},$$
and
$$E_6(x,t)=\{w=(w_1,\xi)\in \mathbb{C}^n:\xi\in\mathbb{C}^{n-1}, |\xi|>t\}.$$
Then $D(z,R)^c\subseteq E_5(x,R\Phi'(M^2)^{-\frac{1}{2}}-M)\cup E_6(x,R\Psi'(M^2)^{-\frac{1}{2}}).$
It is easy to see that there is a constant $R_5$ such that for $t>R_5$ and $0\leq x\leq M$,
\begin{align}\label{q35}
    \frac{\int_{E_5(x,t)}H(z,w)dV(w)}{e^{\frac{p}{2}\Psi(|z|^2)}[\Phi '(|z|^2)\Psi '(|z|^2)^{n-1}]^{p-1+k}}< \frac{\varepsilon}{2}.
\end{align}
and
\begin{align}\label{q36}
    \frac{\int_{E_6(x,t)}H(z,w)dV(w)}{e^{\frac{p}{2}\Psi(|z|^2)}[\Phi '(|z|^2)\Psi '(|z|^2)^{n-1}]^{p-1+k}}<\frac{\varepsilon}{2}.
\end{align}
Hence, for $0\leq x\leq M$ and $R>\max\{(R_5+M)\Phi'(M^2)^{\frac{1}{2}},R_5\Psi'(M^2)^{\frac{1}{2}}\}$, inequality \eqref{q37} holds. By taking
$$R_0=\max\left\{M,R_2,R_3,R_4,(R_5+M)\Phi'(M^2)^{\frac{1}{2}},R_5\Psi'(M^2)^{\frac{1}{2}}\right\},$$
which is clearly independent of $x$, 
we see that for $0\leq x<\infty$ and $R>R_0$, inequality \eqref{q37} holds.
Thus, we have proved this lemma for the case $\frac{1}{3}<p<\infty.$ As for the case $0<p\leq \frac{1}{3}$, we begin at estimating the second part of \eqref{exq21} as
\begin{align*}
\int_{\theta_0(xr)}^{\pi}(xr)^{-\frac{3p}{2}}[\Phi '(xr)]^{-\frac{p}{2}}\theta^{-3p}d\theta
\leq \pi\int_{\theta_0(xr)}^{\pi}(xr)^{-\frac{3p}{2}}[\Phi '(xr)]^{-\frac{p}{2}}\theta^{-3p-1}d\theta\lesssim \Phi'(xr)^{p}.
\end{align*}
It follows that
\begin{align}\label{exq22}
\int_{-\pi}^{\pi}h(xre^{i\theta})^pd\theta\lesssim
(xr)^{-\frac{1}{2}}[\Phi '(xr)]^{\frac{2p-1}{2}}+\Phi'(xr)^{p}\lesssim \Phi'(xr)^{p}\textup{ for }0<p\leq\frac{1}{3}.
\end{align}
With minor changes to the former case, we see that this lemma holds for $0<p\leq \frac{1}{3}$ while we replace $S_x(r)$ and \eqref{exq29} with
\begin{align*}
S_x(r)=\Phi'(xr)\left[\frac{\Phi'(r^2)}{\Phi'(xr)}\right]^{k}
\left[\frac{\Phi'(xr)}{\Phi'(x^2)}\right]^{p-1+k}
\left[\frac{\Psi'(xr)}{\Psi'(r^2)}\right]^{(n-1)(1-k)}
\left[\frac{\Psi'(xr)}{\Psi'(x^2)}\right]^{(p-1+k)(n-1)},
\end{align*}
and
\begin{align*}
\int_{R\theta_0(xr)}^{\pi}h(xre^{i\theta})^pd\theta\lesssim
 R^{-3p}\Phi '(xr)^{p}.
\end{align*}
To avoid repetitions, we omit the details here. The proof is completed.
\end{proof}
\section{Schatten Class Toeplitz Operators}\label{s5}
Recall that $d\nu_\Psi(z)=K(z,z)\,d\mu_\Psi(z)\simeq \Phi '(|z|^2)\Psi '(|z|^2)^{n-1}dV(z)\simeq |B(z,r)|^{-1}\,dV(z).$
\begin{lemma}\label{4p4}
		Let $0< p<\infty$. The following conditions are equivalent.\\
		\textup{(A)} The function $\widetilde{\mu}(z)$ is in $L^p(\mathbb{C}^n,d\nu_\Psi)$.\\
		\textup{(B)} The function $\widehat{\mu}_r(z)$ is in $L^p(\mathbb{C}^n,d\nu_\Psi)$ for some(or any) $0<r<\infty$.\\
		\textup{(C)} The sequence $\{\widehat{\mu}_r(z_k)\}_{k=1}^\infty$ is in $l^p$ for some(or any) $\Psi$-lattice $\{z_k\}_{k=1}^\infty$. Furthermore,
\begin{align*}
  \|\widetilde{\mu}\|_{L^p(\mathbb{C}^n,d\nu_\Psi)} \simeq \|\widehat{\mu}_r\|_{L^p(\mathbb{C}^n,d\nu_\Psi)}\simeq \|\{\widehat{\mu}_r(z_k)\}_{k=1}^\infty\|_{l^p}.
\end{align*}
	\end{lemma}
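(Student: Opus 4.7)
The plan is to chain pointwise and integrated estimates: a lower bound $\widehat\mu_r\lesssim\widetilde\mu$ from the size of the reproducing kernel, an upper bound $\widetilde\mu\lesssim\mathcal{C}(\widehat\mu_r)$ from Lemma \ref{2p4}, and a standard lattice discretization linking $\|\widehat\mu_r\|_{L^p(\mathbb{C}^n,d\nu_\Psi)}$ and $\|\{\widehat\mu_r(z_k)\}\|_{\ell^p}$. First, fix $0<r\le r_0$ with $r_0$ as in Lemma \ref{2p1}; by \eqref{3e31} we have $|k_z(w)|^{2} e^{-\Psi(|w|^2)}\simeq |D(z,r)|^{-1}$ on $D(z,r)$, so restricting the defining integral of $\widetilde\mu(z)$ to $D(z,r)$ produces the pointwise bound $\widehat\mu_r(z)\lesssim\widetilde\mu(z)$. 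This immediately gives $\|\widehat\mu_r\|_{L^p(\mathbb{C}^n,d\nu_\Psi)}\lesssim\|\widetilde\mu\|_{L^p(\mathbb{C}^n,d\nu_\Psi)}$ for every $0<p<\infty$, and a routine covering argument (splitting $D(z,r)$ for large $r$ into finitely many balls of radius $r_0$) removes the constraint $r\le r_0$.

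For the reverse direction, I would apply Lemma \ref{2p4} to the holomorphic function $f=K_z$, obtaining the Carleson-type dominance
\[
\widetilde\mu(z)\;\lesssim\;\int_{\mathbb{C}^n}|k_z(w)|^{2} e^{-\Psi(|w|^2)}\,\widehat\mu_r(w)\,dV(w).
\]
When $p\ge 1$, Jensen's inequality, using that $|k_z(w)|^{2} e^{-\Psi(|w|^2)}dV(w)$ is a probability measure, combined with Fubini's theorem and the reproducing identity $\int |K(z,w)|^2 e^{-\Psi(|z|^2)}dV(z)=K(w,w)$, yields $\|\widetilde\mu\|_{L^p(\mathbb{C}^n,d\nu_\Psi)}\le\|\widehat\mu_r\|_{L^p(\mathbb{C}^n,d\nu_\Psi)}$. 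The equivalence (B)$\iff$(C) is the standard lattice discretization: Lemma \ref{3p4} (finite overlap), the local comparability $\widehat\mu_r(z)\simeq \widehat\mu_{r'}(z_k)$ under small shifts of center at the cost of a bounded dilation of the radius, and $\int_{B(z_k,r)} d\nu_\Psi \simeq 1$ together produce $\sum_k\widehat\mu_r(z_k)^p\simeq \int \widehat\mu_{r'}^p\,d\nu_\Psi$ for a suitable $r'$; iterating in $r$ gives the stated radius- and lattice-independence.

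The main obstacle is the case $0<p<1$ of (B)$\Rightarrow$(A), where Jensen's inequality reverses. To handle it I would decompose $\mu=\sum_k \mu_k$ with $\mu_k$ supported in $B(z_k,r)$ and $\mu_k(\mathbb{C}^n)\lesssim\widehat\mu_r(z_k)\,|D(z_k,r)|$, use the subadditivity $(\sum a_i)^p\le\sum a_i^p$ to obtain $\int\widetilde\mu^p\,d\nu_\Psi\le\sum_k\int\widetilde{\mu_k}^p\,d\nu_\Psi$, and bound $\widetilde{\mu_k}(z)\lesssim \mu_k(\mathbb{C}^n)\cdot \sup_{w\in B(z_k,r)}|k_z(w)|^{2} e^{-\Psi(|w|^2)}$. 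Replacing the supremum by an average via Lemma \ref{2p3} and invoking the integral estimate Lemma \ref{qt5} (with exponent $p$ and $k=0$) on the resulting weighted integrals of $|K(z,w)|^{2p}$ would yield the desired $L^p$ control, completing the chain for all $0<p<\infty$.
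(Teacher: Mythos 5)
Your proposal is correct in outline, and it differs from the paper in an honest way: the paper's own ``proof'' of this lemma is a two-line citation (Lemma 16 of \cite{WTH} for $1\leq p<\infty$, and the remark that $0<p<1$ goes as in Lemma 2.4 of \cite{IVW}), whereas you reconstruct a self-contained argument from the paper's internal toolkit. Your $p\geq 1$ half (the pointwise bound $\widehat{\mu}_r\lesssim\widetilde{\mu}$ from \eqref{3e31}, the Carleson dominance from Lemma \ref{2p4} applied to $K_z$, then Jensen with respect to the probability measure $|k_z(w)|^2e^{-\Psi(|w|^2)}dV(w)$, Fubini and the reproducing identity) is exactly the kind of argument the cited \cite{WTH} lemma encapsulates, and your $0<p<1$ half (disjoint decomposition of $\mu$ subordinate to the lattice, subadditivity of $t\mapsto t^p$, replacing the supremum of $|k_z|^{2p}e^{-p\Psi}$ by an average via Lemma \ref{2p3}, then a kernel integral estimate) mirrors the manipulations the paper itself performs in the proof of Theorem \ref{4p5} and in Lemma \ref{qt8}. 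The discretization (B)$\Leftrightarrow$(C) via finite overlap and $\nu_\Psi(B(z_k,r))\simeq 1$ is standard and fine. One cosmetic point: you read the Berezin transform with $|k_z(w)|^2$, which is indeed the intended definition (it is what makes $\widetilde{T_\mu}=\widetilde{\mu}$, used later in the paper), even though the displayed formula in Section \ref{s1} has the kernel to the first power.

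The only correction needed is the parameter choice in your final step. After Fubini you must control
\begin{align*}
\int_{\mathbb{C}^n}|k_z(u)|^{2p}\,d\nu_\Psi(z)
=\int_{\mathbb{C}^n}|K(z,u)|^{2p}K(z,z)^{1-p}e^{-\Psi(|z|^2)}\,dV(z)
\simeq\int_{\mathbb{C}^n}|K(z,u)|^{2p}e^{-p\Psi(|z|^2)}\bigl[\Phi'(|z|^2)\Psi'(|z|^2)^{n-1}\bigr]^{1-p}dV(z),
\end{align*}
so Lemma \ref{qt5} must be invoked with exponent $2p$ and weight parameter $k=1-p$ (admissible since $0<p\leq 1$), not ``exponent $p$ and $k=0$''; with $k=0$ the residual factor $[\Phi'\Psi'^{\,n-1}]^{1-p}(|z|^2)$ cannot be pulled out of the $z$-integral. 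With the correct parameters the lemma yields the bound $e^{p\Psi(|u|^2)}[\Phi'(|u|^2)\Psi'(|u|^2)^{n-1}]^{p}\simeq e^{p\Psi(|u|^2)}|B(u,r)|^{-p}$, which cancels $\mu_k(\mathbb{C}^n)^p\simeq\widehat{\mu}_r(z_k)^p|B(z_k,r)|^{p}$ after averaging over $B(z_k,2r)$, and the sum closes to $\sum_k\widehat{\mu}_r(z_k)^p$ as you intend. This is a fixable slip in a sketch, not a gap in the approach.
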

	\begin{proof}
See \cite[Lemma 16]{WTH} for $1\leq p<\infty$. For $0<p<1$, the proof is similar to Lemma 2.4 of \cite{IVW} and we omit the details here.
	\end{proof}
\begin{lemma}\label{qt6}
  Let $\left\{z_{k}\right\}_{k=1}^\infty$ be a $\Psi$-lattice with covering radius $r>0$ and $0<R<\infty$. For each $z \in\mathbb{C}^n$, the set $B(z,R)$ contains at most $N$ points of the lattice, where $N$ depends on $R$ and $r$ but not on the point $z$.
\end{lemma}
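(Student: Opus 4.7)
The plan is to convert the statement, which is phrased in terms of Bergman balls, into an equivalent statement about the non-isotropic boxes $D(\cdot,\cdot)$, and then invoke the packing result of Lemma \ref{3p3}. By \eqref{2e2}, applied to the radius $r/2$, there exists $r_1=r_1(r)>0$ such that $D(w,r_1)\subseteq B(w,r/2)$ for every $w\in\mathbb{C}^n$; applied to the radius $R$, there exists $r_2=r_2(R)>0$ such that $B(z,R)\subseteq D(z,r_2)$ for every $z\in\mathbb{C}^n$. Crucially, these constants depend only on $r$ and $R$, not on the base point.

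Since $\{z_k\}_{k=1}^\infty$ is a $\Psi$-lattice, the Bergman balls $\{B(z_k,r/2)\}_{k=1}^\infty$ are pairwise disjoint, and hence so are the smaller boxes $\{D(z_k,r_1)\}_{k=1}^\infty$. If $z_k\in B(z,R)$, then $z_k\in D(z,r_2)$, and trivially $z_k\in D(z_k,r_1)$, so $D(z_k,r_1)\cap D(z,r_2)\ne\emptyset$. Therefore
$$
\textup{card}\{k:z_k\in B(z,R)\}
\le \textup{card}\{k:D(z,r_2)\cap D(z_k,r_1)\ne\emptyset\}.
$$
Now apply Lemma \ref{3p3} with the generic radius taken to be $r_1$ and the parameters $m=1$, $b=1$, $a=r_2/r_1$. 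This yields an integer $N$, depending only on $r_1$ and $r_2$, hence only on $r$ and $R$, that bounds the right-hand side independently of $z$.

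There is no real obstacle: the argument is essentially a packing estimate, and all the work has already been done in establishing \eqref{2e2} and Lemma \ref{3p3}. The only point requiring mild attention is the uniformity in $z$ of the constants $r_1(r)$ and $r_2(R)$ coming from \eqref{2e2}, which is guaranteed by the cited statement from \cite[Lemma 7.2]{SY}.
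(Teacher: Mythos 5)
Your proof is correct and follows essentially the same route as the paper: a packing argument based on the disjointness of the lattice balls and the comparability of Bergman balls with the boxes $D(\cdot,\cdot)$ from \eqref{2e2}. The only cosmetic difference is that the paper runs the "analogous argument as in Lemma \ref{3p3}" directly with Bergman balls (using $B(z,R)\cap B(z_k,r)\neq\emptyset$ for $z_k\in B(z,R)$), whereas you formally reduce to Lemma \ref{3p3} itself via the inclusions $D(z_k,r_1)\subseteq B(z_k,r/2)$ and $B(z,R)\subseteq D(z,r_2)$, which is a perfectly valid and slightly more explicit way to cite the existing packing lemma.
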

\begin{proof}
  By an analogous argument as in Lemma \ref{3p3}, we can show that
  $$N:=\textup{card}\{k:B(z,R)\cap B(z_k,r)\neq \emptyset\}<\infty.$$
  Moreover, $N$ is independent of $z$. Since $B(z,R)\cap B(z_k,r)\neq \emptyset$ for $z_k\in B(z,R)$, we have
  $$\textup{card}\{k:z_k\in B(z,R)\}\leq N.$$
  The proof is finished.
\end{proof}
With the help of the last lemma, we obtain the following consequence which is almost identical to Lemma 4.5 of \cite{APP}.
\begin{lemma}\label{qt7}
  Let $\left\{z_{k}\right\}_{k=1}^\infty$ be a $\Psi$-lattice and $0<R<\infty$. Then $\left\{z_{k}\right\}_{k=1}^\infty$ can be partitioned into $M$ subsequences such that, if $z_j$ and $z_k$ are different points in the same subsequence, then $\varrho(z_j,z_k)\geq R.$
\end{lemma}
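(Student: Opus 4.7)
The plan is to recast this as a graph coloring problem. Define a graph $G$ on the vertex set $\{z_k\}_{k=1}^\infty$ by declaring $z_j \sim z_k$ whenever $j \neq k$ and $\varrho(z_j, z_k) < R$. A partition into subsequences as in the conclusion is exactly a proper vertex coloring of $G$, and the number $M$ of subsequences equals the number of colors used. So it suffices to show that $G$ admits a proper coloring with a uniformly bounded number of colors.

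First I would apply Lemma \ref{qt6} with the given $R$ to obtain an integer $N = N(R, r)$, independent of $z$, such that every Bergman ball $B(z, R)$ contains at most $N$ points of the lattice. In particular, for each fixed $z_k$, the set of neighbors of $z_k$ in $G$ is contained in $B(z_k, R) \cap \{z_j\}_{j \neq k}$, which has cardinality at most $N - 1$. Hence $G$ has maximum degree at most $N - 1$.

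Next I would invoke the standard greedy coloring fact that any graph of maximum degree $\Delta$ is properly $(\Delta + 1)$-colorable. Concretely, enumerate the vertices in their natural order $z_1, z_2, \dots$ and assign to $z_k$ the smallest positive integer in $\{1, 2, \dots, N\}$ that does not already appear among the (at most $N-1$) previously colored neighbors of $z_k$. Such a choice always exists because at most $N-1$ colors are forbidden at the moment $z_k$ is colored. Let $S_i$ be the set of lattice points assigned color $i$, for $i = 1, \dots, N$. Then $\{S_i\}_{i=1}^N$ is the desired partition: if $z_j, z_k \in S_i$ are distinct, they cannot be adjacent in $G$, so $\varrho(z_j, z_k) \geq R$. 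Setting $M = N$ completes the argument.

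There is no real obstacle here once Lemma \ref{qt6} is in place; the only mildly delicate point is checking that the greedy procedure is well-defined on the countable set $\{z_k\}_{k=1}^\infty$, which is immediate from the standard induction on $k$ since each step only consults finitely many already-colored neighbors.
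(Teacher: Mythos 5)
Your argument is correct and is essentially the same as the one the paper relies on (it cites Lemma 4.5 of \cite{APP}, whose proof is exactly this greedy assignment based on the counting bound of Lemma \ref{qt6}); the graph-coloring language is just a convenient reformulation of that inductive construction.
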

\begin{lemma}\label{qt8}
  Let $0<R<\infty,$ and $0<p<2.$ Let $r>0$ be fixed. Suppose $\left\{z_{k}\right\}_{k=1}^\infty$ is a sequence in $\mathbb{C}^n$ with the property that $\varrho(z_j,z_k)\geq R$ for every $j\neq k.$ Then we have
  \begin{align*}
    \sum_{\{j:j\neq k\}}\sup_{w\in B(z_k,r)}|k_{z_j}(w)|^pe^{-\frac{p}{2}\Psi(|w|^2)}=o(1)|B(z_k,r)|^{-\frac{p}{2}}\textup{ as } R\to\infty,
  \end{align*}
for every $k$.
\end{lemma}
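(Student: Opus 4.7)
The plan is to pass from the pointwise supremum to an $L^p$-average in $w$ via the sub-mean-value estimate of Lemma \ref{2p3}, then to linearize the discrete sum over $j$ into an integral by Lemma \ref{2p4}, and finally to extract the decay in $R$ from the tail estimate in Lemma \ref{qt5}(B). Concretely, with $r_0$ the constant from Lemma \ref{2p1}, pick $r_1\in(0,r_0]$ so small that $B(w,r_1)\subseteq B(z_k,2r)$ for every $w\in B(z_k,r)$. Applying Lemma \ref{2p3} to the holomorphic function $k_{z_j}$ with exponent $p$, and using $|B(w,r_1)|\simeq|B(z_k,r)|$ (by Lemma \ref{3p1} and \eqref{2e3}), we get, uniformly in $w\in B(z_k,r)$,
\[
|k_{z_j}(w)|^p e^{-\frac{p}{2}\Psi(|w|^2)}\lesssim \frac{1}{|B(z_k,r)|}\int_{B(z_k,2r)}|k_{z_j}(\zeta)|^p e^{-\frac{p}{2}\Psi(|\zeta|^2)}\,dV(\zeta).
\]
Summing over $j\ne k$ and interchanging sum and integral reduces the claim to showing that
$\sum_{j\ne k}|k_{z_j}(\zeta)|^p e^{-\frac p2\Psi(|\zeta|^2)}\lesssim o(1)\,|B(z_k,r)|^{-p/2}$ as $R\to\infty$, uniformly for $\zeta\in B(z_k,2r)$.

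For this pointwise-in-$\zeta$ bound, use \eqref{ex2e6} to write $|k_{z_j}(\zeta)|^p e^{-\frac p2\Psi(|\zeta|^2)}\simeq|B(z_j,r)|^{p/2}|K(z_j,\zeta)|^p e^{-\frac p2\Psi(|z_j|^2)}e^{-\frac p2\Psi(|\zeta|^2)}$, and apply Lemma \ref{2p4} with exponent $p$ and a small averaging radius $s>0$ to the atomic measure $\mu:=\sum_{j\ne k}|B(z_j,r)|^{p/2}\delta_{z_j}$ and to the holomorphic test function $z\mapsto K(z,\zeta)$:
\[
\sum_{j\ne k}|k_{z_j}(\zeta)|^p e^{-\frac{p}{2}\Psi(|\zeta|^2)}\lesssim e^{-\frac{p}{2}\Psi(|\zeta|^2)}\int_{\mathbb{C}^n}|K(z,\zeta)|^p e^{-\frac{p}{2}\Psi(|z|^2)}\,\widehat{\mu}_s(z)\,dV(z).
\]
The separation $\varrho(z_j,z_{j'})\ge R$ together with \eqref{2e2} forces $D(z,s)$ to contain at most one of the $z_j$'s once $R$ is large enough; whenever it does, Lemma \ref{3p1} and \eqref{2e3} give $\widehat{\mu}_s(z)\lesssim|B(z,r)|^{p/2-1}$, and $\widehat{\mu}_s$ is supported inside $\bigcup_{j\ne k}B(z_j,C_0)$ for some $C_0=C_0(s)$ coming from \eqref{2e2}. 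For $\zeta\in B(z_k,2r)$ and any $z$ in this support, the triangle inequality yields $\varrho(z,\zeta)\ge R-2r-C_0$, so \eqref{2e2} again places the support inside $D(\zeta,R')^c$ with $R'\to\infty$ as $R\to\infty$, uniformly in $\zeta$ and $z_k$. Since $0<p<2$, the exponent $k:=1-p/2\in(0,1]$ is admissible in Lemma \ref{qt5}(B), which yields
\[
\int_{D(\zeta,R')^c}|K(z,\zeta)|^p e^{-\frac{p}{2}\Psi(|z|^2)}[\Phi'(|z|^2)\Psi'(|z|^2)^{n-1}]^{1-p/2}\,dV(z)\le o(1)\,e^{\frac{p}{2}\Psi(|\zeta|^2)}[\Phi'(|\zeta|^2)\Psi'(|\zeta|^2)^{n-1}]^{p/2}
\]
with $o(1)\to 0$ as $R\to\infty$ uniformly in $\zeta$. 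Substituting back and using $|B(\zeta,r)|\simeq|B(z_k,r)|$ for $\zeta\in B(z_k,2r)$ gives the required pointwise bound, and integrating over $B(z_k,2r)$ then dividing by $|B(z_k,r)|$ closes the argument.

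The principal obstacle is the absence of any pointwise Gaussian-type decay for $|K(z,w)|e^{-(\Psi(|z|^2)+\Psi(|w|^2))/2}$ in $\varrho(z,w)$; on classical Fock spaces such decay would render the lemma essentially immediate, and the introduction explicitly flags this as the chief new difficulty on $F^2_\Psi$. The workaround is the two-step chain above: first average in $w$ to produce an integrated quantity, then linearize the discrete sum via Lemma \ref{2p4} so that Lemma \ref{qt5}(B) can actually deliver the decay. The restriction $0<p<2$ is genuinely used, since it is exactly what makes $k=1-p/2$ nonnegative and therefore permissible in Lemma \ref{qt5}(B). The most delicate bookkeeping is the translation of the Bergman-metric separation $R$ into a product-ball cutoff $D(\zeta,R')^c$ with $R'\to\infty$ via the two-sided inclusion \eqref{2e2}.
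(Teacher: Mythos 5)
Your proposal is correct and follows essentially the same route as the paper: pass from the supremum to an average over $B(z_k,2r)$ by Lemma \ref{2p3}, convert the sum over the separated points into an integral of $|K(\cdot,\zeta)|^p e^{-\frac{p}{2}\Psi}$ weighted by $|B(\cdot,r)|^{\frac{p}{2}-1}$ supported far from $\zeta$, and conclude with Lemma \ref{qt5}(B) at exponent $k=1-\frac{p}{2}$. The only cosmetic difference is that you package the sum-to-integral step via Lemma \ref{2p4} with an atomic measure, whereas the paper simply applies the sub-mean-value inequality of Lemma \ref{2p3} a second time to smear each $K(z_j,\zeta)$ over $B(z_j,r)$; both hinge on the same separation-to-tail translation that the paper also uses implicitly.
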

\begin{proof}
  By Lemma \ref{2p3}, \eqref{2e3} and \eqref{ex2e6}, we obtain
  \begin{align}\label{t1}
    \sup_{w\in B(z_k,r)}|k_{z_j}(w)|^pe^{-\frac{p}{2}\Psi(|w|^2)}
    &\lesssim \frac{1}{|B(z_k,2r)|}\int_{B(z_k,2r)}|k_{z_j}(\zeta)|^pe^{-\frac{p}{2}\Psi(|\zeta|^2)}\,dV(\zeta)\notag\\
    &\simeq \frac{1}{|B(z_k,2r)|}\int_{B(z_k,2r)}|K(z_j,\zeta)|^pe^{-\frac{p}{2}\Psi(|z_j|^2)}|B(z_j,r)|^{\frac{p}{2}}
    e^{-\frac{p}{2}\Psi(|\zeta|^2)}\,dV(\zeta).
  \end{align}
  By Lemma \ref{2p3} again, we get
  \begin{align}\label{t2}
    |K(z_j,\zeta)|^pe^{-\frac{p}{2}\Psi(|z_j|^2)}|B(z_j,r)|^{\frac{p}{2}}
    &\lesssim |B(z_j,r)|^{\frac{p}{2}-1}\int_{B(z_j,r)}|K(\xi,\zeta)|^pe^{-\frac{p}{2}\Psi(|\xi|^2)}\,dV(\xi)\notag\\
    &\simeq \int_{B(z_j,r)}|K(\xi,\zeta)|^pe^{-\frac{p}{2}\Psi(|\xi|^2)}|B(\xi,r)|^{\frac{p}{2}-1}\,dV(\xi).
  \end{align}
  It is easy to see that for $R>12r$ and $\zeta \in B(z_k,2r)$,
  \begin{align}\label{t3}
    \bigcup_{\{j:j\neq k\}}B(z_j,r)\subseteq B(z_k,R/2)^c\subseteq B(\zeta,R/3)^c.
  \end{align}
  By \eqref{t1}, \eqref{t2}, \eqref{t3} and Lemma \ref{qt5}, we have
  \begin{align*}
    &\sum_{\{j:j\neq k\}}\sup_{w\in B(z_k,r)}|k_{z_j}(w)|^pe^{-\frac{p}{2}\Psi(|w|^2)}\\
    &\lesssim |B(z_k,r)|^{-1}\int_{B(z_k,2r)}\left(\sum_{\{j:j\neq k\}}\int_{B(z_j,r)}|K(\xi,\zeta)|^pe^{-\frac{p}{2}\Psi(|\xi|^2)}|B(\xi,r)|^{\frac{p}{2}-1}\,dV(\xi)\right)
    e^{-\frac{p}{2}\Psi(|\zeta|^2)}\,dV(\zeta)\\
    &\lesssim |B(z_k,r)|^{-1}\int_{B(z_k,2r)}e^{-\frac{p}{2}\Psi(|\zeta|^2)}\,dV(\zeta)\int_{B(\zeta,R/3)^c}|K(\xi,\zeta)|^pe^{-\frac{p}{2}\Psi(|\xi|^2)}|B(\xi,r)|^{\frac{p}{2}-1}\,dV(\xi)
    \\
    &\lesssim o(1)|B(z_k,r)|^{-1}\int_{B(z_k,2r)}|B(\zeta,r)|^{-\frac{p}{2}}\,dV(\zeta)\\
    &=o(1)|B(z_k,r)|^{-\frac{p}{2}},
  \end{align*}
  as $R\to\infty$. The proof is finished.
\end{proof}
For an operator $T$ on $F^2_\Psi$, the Berizin transform of $T$ is the function $\widetilde{T}$ on $\mathbb{C}^n$ defined as
$$\widetilde{T}(z):=\langle Tk_z,k_z\rangle \textup{ for } z\in \mathbb{C}^n.$$
\begin{lemma}\label{qt9}
  Let $T$ be a positive operator on $F^2_\Psi$. \\
  \textup{(A)} If $0< p\leq 1$ and $\widetilde{T}\in L^p(\mathbb{C}^n,d\nu_\Psi)$, then $T\in S_p.$ Furthermore,
  $\|T\|_{S_p}\leq \|\widetilde{T}\|_{L^p(\mathbb{C}^n,d\nu_\Psi)}.$\\
  \textup{(B)} If $1\leq p<\infty$ and $T\in S_p$, then $\widetilde{T}\in L^p(\mathbb{C}^n,d\nu_\Psi)$.Furthermore,
  $\|\widetilde{T}\|_{L^p(\mathbb{C}^n,d\nu_\Psi)}\leq \|T\|_{S_p}.$
\end{lemma}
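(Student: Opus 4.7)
The plan is to prove both parts in a unified way, via the trace identity
\[
\mathrm{tr}\,S = \int_{\mathbb{C}^n} \widetilde{S}(z)\,d\nu_\Psi(z),
\]
valid for every bounded positive operator $S$ on $F^2_\Psi$ (with both sides allowed to be $+\infty$), applied to $S = T^p$, combined with Jensen's inequality for the spectral measure of $T$ evaluated at the unit vector $k_z$. The argument is purely Hilbert-space in nature — neither the kernel estimates of Section \ref{s4} nor the $\textup{BDA}$ decomposition will be needed — and it rests on nothing beyond the reproducing property and the normalization $d\nu_\Psi = K(z,z)\,d\mu_\Psi$.

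First I would establish the trace identity. Writing $\widetilde{S}(z) = \|S^{1/2}k_z\|_\Psi^2$ and expanding in an arbitrary orthonormal basis $\{e_j\}$ of $F^2_\Psi$ gives $\widetilde{S}(z) = \sum_j |\langle k_z, S^{1/2}e_j\rangle|^2$. The reproducing relation $\langle f, k_z\rangle = f(z)/\|K_z\|_\Psi$ yields $|\langle k_z, S^{1/2}e_j\rangle|^2 = |(S^{1/2}e_j)(z)|^2/K(z,z)$, and Tonelli's theorem, applicable since the summands are nonnegative, lets me integrate termwise:
\[
\int_{\mathbb{C}^n}\widetilde{S}(z)\,d\nu_\Psi(z) = \sum_j \int_{\mathbb{C}^n}\frac{|(S^{1/2}e_j)(z)|^2}{K(z,z)}K(z,z)\,d\mu_\Psi(z) = \sum_j \|S^{1/2}e_j\|_\Psi^2 = \mathrm{tr}\,S.
\]

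Next, using the spectral decomposition $T = \int_0^\infty \lambda\,dE(\lambda)$, the scalar measure $d\mu_z(\lambda) := \langle dE(\lambda)k_z, k_z\rangle$ is a Borel probability measure on $[0,\infty)$ because $\|k_z\|_\Psi = 1$. The functional calculus then gives $\widetilde{T}(z) = \int\lambda\,d\mu_z(\lambda)$ and $\widetilde{T^p}(z) = \int\lambda^p\,d\mu_z(\lambda)$, and Jensen applied to $t\mapsto t^p$ delivers
\[
\widetilde{T^p}(z) \leq \widetilde{T}(z)^p \quad (0<p\leq 1), \qquad \widetilde{T}(z)^p \leq \widetilde{T^p}(z) \quad (p\geq 1),
\]
from concavity and convexity respectively.

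Combining the two pieces and applying the trace identity to the bounded positive operator $T^p$ gives $\|T\|_{S_p}^p = \mathrm{tr}\,T^p = \int \widetilde{T^p}(z)\,d\nu_\Psi(z)$. For (A) with $0<p\leq 1$, the concave Jensen bound produces $\|T\|_{S_p}^p \leq \|\widetilde{T}\|_{L^p(d\nu_\Psi)}^p$; the hypothesis $\widetilde{T}\in L^p$ forces $\mathrm{tr}\,T^p < \infty$, so $T^p$ is compact, $T$ has discrete spectrum with $p$-summable eigenvalues, and $T\in S_p$ with the stated estimate. For (B) with $p\geq 1$, the convex Jensen bound supplies the reverse estimate $\|\widetilde{T}\|_{L^p(d\nu_\Psi)}^p \leq \mathrm{tr}\,T^p = \|T\|_{S_p}^p$. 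I do not foresee a serious obstacle: the only step that calls for care is the trace identity itself when $T^p$ is not a priori trace class, but nonnegativity of the integrand makes Tonelli automatic and the infinite-value case causes no trouble.
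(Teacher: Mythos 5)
Your argument is correct, and it is essentially the proof the paper has in mind: Lemma \ref{qt9} is proved by reference to Lemma 2.5 of \cite{IVW}, whose argument is exactly your combination of the trace formula $\mathrm{tr}\,S=\int_{\mathbb{C}^n}\widetilde{S}\,d\nu_\Psi$ for positive operators (Lemma 8.4 of \cite{SY}, which this paper also invokes in Section \ref{s6}) with the spectral Jensen inequalities $\widetilde{T^p}\leq\widetilde{T}^p$ for $0<p\leq 1$ and $\widetilde{T}^p\leq\widetilde{T^p}$ for $p\geq 1$ (Proposition 1.31 of \cite{Zhu}), which you rederive via the spectral measure of $T$ at $k_z$. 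Your handling of the non-compact case (finiteness of $\mathrm{tr}\,T^p$ forcing compactness and $p$-summability of the eigenvalues) is also sound, so no gap remains.
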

\begin{proof}
The proof is similar to Lemma 2.5 of \cite{IVW}.
\end{proof}

	\begin{proof}[Proof of Theorem \ref{4p5}]
	The equivalence (A)$\Leftrightarrow$(D) for $p\geq 1$ is proved in \cite{SY}. By Lemma \ref{4p4}, it remains to prove that (A)$\Rightarrow$(D) and (B)$\Rightarrow$(A) when $0<p<1$. By direct calculations, we have $\widetilde{T_\mu}=\widetilde{\mu}$. By this fact and Lemma \ref{qt9}, we see that (B)$\Rightarrow$(A) for $0<p<1$. Furthermore,
\begin{align}\label{ext8}
  \|T_\mu\|_{S_p}\leq \|\widetilde{\mu}\|_{L^p(\mathbb{C}^n,d\nu_\Psi)}.
\end{align}
Now, We are going to show (A)$\Rightarrow$(D) for $0<p<1$. Assume that $T_\mu$ is in $S_{p}$. Let $\left\{z_{k}\right\}_{k=1}^\infty$ be a $\Psi$-lattice on $\mathbb{C}^n$. We need to prove that the sequence $\left\{\widehat{\mu}_{r}\left(z_{k}\right)\right\}_{k=1}^\infty$ is in $l^{p}$. For this purpose, we fix a number $R$ and use Lemma \ref{qt7} to divide the lattice $\left\{z_{k}\right\}_{k=1}^\infty$ into $M$ subsequences such that $\varrho(z_{j},z_{k})\geq R$
whenever the two different points $z_{j}$ and $z_{k}$ are in the same subsequence. Let $\left\{z_{k}\right\}_{k=1}^\infty$ be such a subsequence and consider a measure $\nu$ defined by
$$d\nu=\left(\sum_{k=1}^{\infty}\chi_{B(z_k,r)}\right) d\mu.$$
Thus, we have $T_{\nu}\in S_{p}$ and $\left\|T_{\nu}\right\|_{S_{p}}
\leq\left\|T_{\mu}\right\|_{S_{p}}$.

Given an orthonormal basis
$\{e_{k}\}_{k=1}^\infty$ for $F_{\Psi}^{2}$, we define an operator $J$ on $F_{\Psi}^{2}$ by
\begin{align}\label{t5}
Jf=\sum_{k=1}^{\infty}\langle f,e_k\rangle k_{z_{k}}\textup{ for } f\in F_{\Psi}^{2}.
\end{align}
Then $J$ is bounded on $F_{\Psi}^{2}$ by Lemma 8.3 of \cite{SY}. Hence the operator $T=J^{\ast}T_{\nu}J$ is in $S_{p}$ with
\begin{align}\label{t6}
\|T\|_{S_{p}} \lesssim \left\|T_{\mu}\right\|_{S_{p}}.
\end{align}
From the fact that
$$\langle Tf, g\rangle=\langle T_{\nu}Jf, Jg\rangle\textup{ for } f,g \in F_{\Psi}^{2},$$
we deduce that
$$
T f=\sum_{j,k=1}^{\infty}\left\langle T_{\nu} k_{z_{j}}, k_{z_{k}}\right\rangle \left\langle f, e_{j}\right\rangle e_{k}\textup{ for } f \in F_{\Psi}^{2}.
$$
We decompose the operator $T$ as $T=T_{1}+T_{2}$, where $T_{1}$ is the diagonal part of $T$ defined by
$$
T_{1} f=\sum_{k=1}^{\infty}\left\langle T_{\nu} k_{z_{k}}, k_{z_{k}}\right\rangle \left\langle f, e_{k}\right\rangle e_{k}\textup{ for } f \in F_{\Psi}^{2},
$$
and $T_{2}$ is the non-diagonal part defined by $T_{2}=T-T_{1}$. Then, we have
\begin{align}\label{t7}
\|T\|_{S_{p}}^{p} \geq\|T_{1}\|_{S_{p}}^{p}
-\|T_{2}\|_{S_{p}}^{p}.
\end{align}
By the fact that $T_{1}$ is positive, Lemma \ref{2p1}, Lemma \ref{2p2} and \eqref{2e3}, we have the following estimate
\begin{align}\label{t8}
\|T_{1}\|_{S_{p}}^{p}&=\sum_{k=1}^{\infty}\left\langle T_{\nu} k_{z_{k}}, k_{z_{k}}\right\rangle^{p}
=\sum_{k=1}^{\infty}\left(\int_{\mathbb{C}^n}\left|k_{z_{k}}(z)\right|^{2} e^{-\Psi(|z|^2)} d \nu(z)\right)^{p}\notag\\
&\gtrsim\sum_{k=1}^{\infty}\left(\int_{B(z_{k},r)}\left|k_{z_{k}}(z)\right|^{2} e^{-\Psi(|z|^2)}\,d\mu(z)\right)^{p}\notag\\
&\simeq \sum_{k=1}^{\infty}\left(\int_{B(z_{k},r)}|B(z,r)|^{-1}d\mu(z)\right)^{p}
\simeq\sum_{k=1}^{\infty}\widehat{\mu}_{r}(z_{k})^{p}.
\end{align}
On the other hand, by Proposition 1.29 of \cite{Zhu}, we have
\begin{align}\label{t9}
\|T_{2}\|_{S_{p}}^{p} &\leq \sum_{j=1}^{\infty} \sum_{k=1}^{\infty}|\left\langle T_{2} e_{j}, e_{k}\right\rangle |^p=\sum_{j=1}^{\infty}\sum_{\{k:k \neq j\}}|\langle T_{\nu} k_{z_{j}}, k_{z_{k}}\rangle |^p\notag\\
&\leq\sum_{j=1}^{\infty}\sum_{\{k:k \neq j\}}\left(\int_{\mathbb{C}^n}|k_{z_{j}}(w)k_{z_{k}}(w)| e^{-\Psi(|w|^2)} d \nu(w)\right)^{p}\notag\\
&\leq\sum_{j=1}^{\infty}\sum_{\{k:k \neq j\}}\left(\sum_{i=1}^{\infty}\int_{B(z_{i},r)}|k_{z_{j}}(w)k_{z_{k}}(w)| e^{-\Psi(|w|^2)} d \mu(w)\right)^{p}.
\end{align}
For $w\in B(z_i,r)$, by Lemma \ref{2p3}, we have
\begin{align}\label{t10}
\left|k_{z_{j}}(w)\right|e^{-\frac{1}{2}\Psi(|w|^2)}
&\lesssim\left[\frac{1}{|B(w,r)|}\int_{B(w,r)}
\left|k_{z_{j}}(u)\right|^{p} e^{-\frac{p}{2} \Psi(|u|^2)}dV(u)\right]^{1/p}\notag\\
&\lesssim\left[\frac{1}{|B(z_i,2r)|}\int_{B(z_i,2r)}
\left|k_{z_{j}}(u)\right|^{p} e^{-\frac{p}{2} \Psi(|u|^2)}dV(u)\right]^{1/p}\notag\\
&=|B(z_i,2r)|^{-\frac{1}{p}}S_{j}(z_i)^{\frac{1}{p}},
\end{align}
where
$$S_j(z)=\int_{B(z,2r)}
\left|k_{z_j}(u)\right|^{p} e^{-\frac{p}{2} \Psi(|u|^2)}\,dV(u).$$
Similarly, for $w\in B(z_i,r)$,
\begin{align}\label{t11}
\left|k_{z_{k}}(w)\right|e^{-\frac{1}{2}\Psi(|w|^2)}
\lesssim |B(z_i,2r)|^{-\frac{1}{p}}S_k(z_i)^{\frac{1}{p}}.
\end{align}
Thus, by \eqref{t10} and \eqref{t11}, we have
\begin{align*}
&\left(\sum_{i=1}^{\infty}\int_{B(z_{i},r)}|k_{z_j}(w)k_{z_k}(w)|e^{-\Psi(|w|^2)} d \mu(w)\right)^{p}\\
&\lesssim\left(\sum_{i=1}^{\infty}\mu(B(z_{i},r))|B(z_i,2r)|^{-\frac{2}{p}}S_j(z_i)^{\frac{1}{p}}
S_k(z_i)^{\frac{1}{p}}
\right)^{p}\\
&\lesssim \sum_{i=1}^{\infty} \widehat{\mu}_r(z_i)^p|B(z_i,r)|^{p-2}S_j(z_i)
S_k(z_i),
\end{align*}
which, together with \eqref{t9}, further implies that
\begin{align}\label{t12}
\|T_{2}\|_{S_{p}}^{p}
&\lesssim \sum_{i=1}^{\infty}\widehat{\mu}_r(z_i)^p|B(z_i,r)|^{p-2} \sum_{j=1}^{\infty}\sum_{\{k:k \neq j\}} S_j(z_i)S_k(z_i).
\end{align}
We are going to show that, for each $i$,
\begin{align}\label{t13}
\sum_{j=1}^{\infty}\sum_{\{k:k \neq j\}} S_j(z_i)S_k(z_i)=o(1)|B(z_i,r)|^{2-p} \textup{ as } R\to\infty.
\end{align}
For each fixed $i$, there holds
\begin{align}\label{t14}
&\sum_{j=1}^{\infty}\sum_{\{k:k \neq j\}} S_j(z_i)S_k(z_i)=\sum_{\{k:k \neq i\}} S_i(z_i)S_k(z_i)+\sum_{\{j:j\neq i\}}\sum_{\{k:k \neq j\}} S_j(z_i)S_k(z_i)\notag\\
&=\sum_{\{k:k \neq i\}} S_i(z_i)S_k(z_i)+\sum_{\{j:j\neq i\}}\sum_{\{k:k =i\}} S_j(z_i)S_k(z_i)+\sum_{\{j:j\neq i\}}\sum_{\{k:k\neq i,k\neq j\}} S_j(z_i)S_k(z_i)\notag\\
&\leq S_i(z_i)\sum_{\{k:k \neq i\}} S_k(z_i)+S_i(z_i)\sum_{\{j:j\neq i\}} S_j(z_i)+\sum_{\{j:j\neq i\}}\sum_{\{k:k \neq i\}} S_j(z_i)S_k(z_i)\notag\\
&= S_i(z_i)\sum_{\{k:k \neq i\}} S_k(z_i)+S_i(z_i)\sum_{\{j:j\neq i\}} S_j(z_i)+\left(\sum_{\{j:j\neq i\}}S_j(z_i)\right)\left(\sum_{\{k:k \neq i\}} S_k(z_i)\right).
\end{align}
By Lemma \ref{2p1}, Lemma \ref{2p2} and \eqref{2e3}, we know that
\begin{align}\label{t15}
  S_i(z_i)\simeq \int_{B(z_i,2r)}|K(u,u)|^{\frac{p}{2}}e^{-\frac{p}{2}\Psi(|u|^2)}\,dV(u)\simeq \int_{B(z_i,2r)}|B(u,r)|^{-\frac{p}{2}}\,dV(u)\simeq |B(z_i,r)|^{1-\frac{p}{2}}.
\end{align}
By Lemma \ref{qt8}, we get
\begin{align}\label{t16}
  \sum_{\{j:j\neq i\}}S_j(z_i)\leq \sum_{\{j:j\neq i\}}\sup_{u\in B(z_i,2r)}|k_{z_j}(u)|^{p} e^{-\frac{p}{2} \Psi(|u|^2)}|B(z_i,2r)|\lesssim o(1)|B(z_i,r)|^{1-\frac{p}{2}} \textup{ as } R\to \infty.
\end{align}
From \eqref{t14}, \eqref{t15} and \eqref{t16}, we see that \eqref{t13} holds. Therefore, \eqref{t12} and \eqref{t13} yield
\begin{align}\label{t17}
\|T_{2}\|_{S_{p}}^{p}\lesssim o(1)\sum_{i=1}^{\infty}\widehat{\mu}_r(z_i)^p\textup{ as } R\to \infty.
\end{align}
By choosing $R$ to be sufficiently large and combining \eqref{t6}, \eqref{t7}, \eqref{t8} and \eqref{t17}, we arrive at
\begin{align}\label{t18}
\sum_{i=1}^{\infty}\widehat{\mu}_r(z_i)^p\lesssim \left\|T_{\mu}\right\|_{S_{p}}<\infty .
\end{align}
The norm estimate \eqref{t4} comes from \eqref{ext8}, \eqref{t18} and Lemma \ref{4p4}. The proof is completed.
	\end{proof}
\section{Schatten Class Hankel Operators}\label{s6}
Let $z \in \mathbb{C}^n$ and $r>0$. Consider the space $L^2_\Psi(D(z,r)):=L^2(D(z,r),e^{-\Psi(|z|^2)}\,dV(z))$ and its closed subspace of holomorphic functions $F^2_\Psi(D(z,r))$. Let $P_{z,r}$ be the projection of $L^2_\Psi(D(z,r))$ onto $F^2_\Psi(D(z,r))$. Given a function $f \in L^2_\Psi(D(z,r))$, we extend $P_{z,r}(f)$ to $\mathbb{C}^n$ by setting $P_{z,r}(f)(w)=0$ for $w \in \mathbb{C}^n\backslash D(z,r)$.
	\begin{lemma}\label{qt10}
		Let $f,g \in L^2_\Psi$, $0<r<\infty$ and $z\in \mathbb{C}^n$. Then
		\begin{align*}
			\langle f-P(f),\chi_{D(z,r)}g-P_{z,r}(g)\rangle=\langle \chi_{D(z,r)}f-P_{z,r}(f),\chi_{D(z,r)}g-P_{z,r}(g)\rangle.
		\end{align*}
	\end{lemma}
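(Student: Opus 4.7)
The plan is to reduce the claimed identity to an orthogonality statement inside the local space $L^2_\Psi(D(z,r))$, where one can exploit the defining property of $P_{z,r}$. Concretely, I would form the difference of the two inner products and show it is zero by splitting it into two pieces, each vanishing for a different reason.

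First I would write
$$\langle f-P(f),\chi_{D(z,r)}g-P_{z,r}(g)\rangle-\langle \chi_{D(z,r)}f-P_{z,r}(f),\chi_{D(z,r)}g-P_{z,r}(g)\rangle=\langle (1-\chi_{D(z,r)})f+P_{z,r}(f)-P(f),\,\chi_{D(z,r)}g-P_{z,r}(g)\rangle.$$
The key observation is that $\chi_{D(z,r)}g-P_{z,r}(g)$ is supported in $D(z,r)$ (recall $P_{z,r}(g)$ is extended by zero off the ball). Thus the term involving $(1-\chi_{D(z,r)})f$ contributes nothing, since the two factors have disjoint supports.

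It then remains to verify
$$\langle P_{z,r}(f)-P(f),\,\chi_{D(z,r)}g-P_{z,r}(g)\rangle=0.$$
Because the right factor is supported in $D(z,r)$, this inner product is unchanged if we replace $\chi_{D(z,r)}g$ by $g$ inside $D(z,r)$ and interpret everything in $L^2_\Psi(D(z,r))$:
$$\langle P_{z,r}(f)-P(f),\,\chi_{D(z,r)}g-P_{z,r}(g)\rangle=\langle (P_{z,r}(f)-P(f))|_{D(z,r)},\,g-P_{z,r}(g)\rangle_{L^2_\Psi(D(z,r))}.$$
Now $P_{z,r}(f)|_{D(z,r)}\in F^2_\Psi(D(z,r))$ by definition, and $P(f)|_{D(z,r)}$ is the restriction of an entire function to $D(z,r)$, hence also lies in $F^2_\Psi(D(z,r))$. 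So the left factor belongs to $F^2_\Psi(D(z,r))$, while the right factor $g-P_{z,r}(g)$ is orthogonal to $F^2_\Psi(D(z,r))$ by the very definition of the projection $P_{z,r}$. Hence the inner product vanishes, completing the proof.

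There is no real obstacle here; the only point that requires care is the bookkeeping on the convention that $P_{z,r}(\cdot)$ is extended by zero outside $D(z,r)$, which is exactly what allows the support argument in the first step to work.
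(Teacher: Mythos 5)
Your proof is correct: the support observation for $\chi_{D(z,r)}g-P_{z,r}(g)$ together with the fact that both $P_{z,r}(f)|_{D(z,r)}$ and $P(f)|_{D(z,r)}$ lie in $F^2_\Psi(D(z,r))$, hence are annihilated against $g-P_{z,r}(g)$ by the defining orthogonality of $P_{z,r}$, is exactly what is needed. The paper gives no argument of its own here, deferring to Lemma 3.6 of \cite{ZWH}, and that proof is the same standard support-plus-orthogonality computation you carried out.
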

\begin{proof}
  See \cite[Lemma 3.6]{ZWH}.
\end{proof}
\begin{lemma}\label{qt11}
  Suppose $0<l,r<\infty$ and $0<p\leq 1$. Let $\mu$ be a positive Borel measure on $\mathbb{C}^n$. For $f\in H(\mathbb{C}^n)$, there holds
  \begin{align}\label{b1}
    \left(\int_{\mathbb{C}^n}\left|f(z)e^{-\frac{1}{2}\Psi(|z|^2)}\right|^l\,d\mu(z)\right)^p\lesssim \int_{\mathbb{C}^n}\left|f(z)e^{-\frac{1}{2}\Psi(|z|^2)}\right|^{lp}\widehat{\mu}_r(z)^p|B(z,r)|^{p-1}\,dV(z).
  \end{align}
\end{lemma}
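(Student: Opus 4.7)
The plan is to combine the elementary inequality $(a+b)^p\le a^p+b^p$, valid for $0<p\le 1$, with the Bergman-type sub-mean value estimate of Lemma \ref{2p3} applied with exponent $lp$. First I would select a sequence $\{z_k\}_{k=1}^\infty$ satisfying $\bigcup_k D(z_k,r/2)=\mathbb C^n$ and $D(z_j,r/m)\cap D(z_k,r/m)=\emptyset$ for $j\neq k$, together with a Borel partition $\{E_k\}$ of $\mathbb C^n$ with $E_k\subseteq D(z_k,r/2)$. Subadditivity immediately gives
$$\left(\int_{\mathbb C^n}|f|^l e^{-\frac{l}{2}\Psi(|\cdot|^2)}\,d\mu\right)^p\le \sum_k\left(\int_{E_k}|f|^l e^{-\frac{l}{2}\Psi(|\cdot|^2)}\,d\mu\right)^p.$$

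For each piece I would use the trivial bound $\int_{E_k}|f|^l e^{-\frac{l}{2}\Psi}\,d\mu\le \mu(E_k)\sup_{E_k}(|f|^l e^{-\frac{l}{2}\Psi})$, control the measure by $\mu(E_k)\le\mu(D(z_k,r))=\widehat\mu_r(z_k)|D(z_k,r)|$, and apply Lemma \ref{2p3} with exponent $lp$ to the holomorphic function $f$ in order to bound $\bigl(|f(z)|^l e^{-\frac{l}{2}\Psi(|z|^2)}\bigr)^p$ by the mean of $|f|^{lp}e^{-\frac{lp}{2}\Psi}$ over a slightly larger ball. Raising the trivial bound to the $p$-th power and chaining these estimates produces
$$\left(\int_{E_k}|f|^l e^{-\frac{l}{2}\Psi}\,d\mu\right)^p\lesssim \widehat\mu_r(z_k)^p\,|D(z_k,r)|^{p-1}\int_{D(z_k,r_*)}|f|^{lp}\,e^{-\frac{lp}{2}\Psi}\,dV,$$
where $r_*$ is a fixed constant comparable to $r$ produced by the equivalences \eqref{2e2}, and where the volume equivalences \eqref{2e3} together with Lemma \ref{3p1} absorb the change of centre from $z\in E_k$ to $z_k$.

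To finish, I would sum over $k$ and exchange sum and integration by Fubini. The enlarged cover $\{D(z_k,r_*)\}$ has bounded overlap by Lemma \ref{3p4}, and the volume factor satisfies $|D(z_k,r)|\simeq|D(w,r)|$ uniformly for $w\in D(z_k,r_*)$ by \eqref{2e3} and Lemma \ref{3p1}. The main technical obstacle is the weight $\widehat\mu_r(z_k)$: since $\mu$ is only assumed to be a general positive Borel measure, this quantity is not automatically comparable to $\widehat\mu_r(w)$ for nearby $w$. I would resolve this via a ball containment $D(z_k,r)\subseteq D(w,r_{**})$ for $w\in D(z_k,r_*)$ with $r_{**}$ comparable to $r$, yielding $\widehat\mu_r(z_k)\lesssim\widehat\mu_{r_{**}}(w)$; the final passage from $\widehat\mu_{r_{**}}$ back to $\widehat\mu_r$ on the right-hand side is routine, as \eqref{b1} is claimed for fixed but otherwise arbitrary $r$ and the implicit constants are permitted to depend on $r$. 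The bounded overlap then collapses the sum and yields \eqref{b1}.
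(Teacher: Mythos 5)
Your overall scheme --- cover $\mathbb{C}^n$ by lattice balls, use $(\sum_k a_k)^p\le\sum_k a_k^p$ for $0<p\le 1$, bound each piece by a supremum times the measure of a slightly larger ball, convert the supremum into an average of $|f|^{lp}e^{-\frac{lp}{2}\Psi}$ via Lemma \ref{2p3}, and collapse the sum by bounded overlap --- is exactly the paper's proof. The one place you diverge is the bookkeeping of the radius in the weight, and there your justification does not work as stated. After transferring the weight from the lattice point $z_k$ to the variable point $w$ you end up with $\widehat{\mu}_{r_{**}}(w)$ with $r_{**}>r$, and you claim the passage back to $\widehat{\mu}_r(w)$ is routine because the constants may depend on $r$. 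For a general positive Borel measure the pointwise bound $\widehat{\mu}_{r_{**}}(w)\lesssim\widehat{\mu}_r(w)$ is false (take $\mu$ a point mass located in $D(w,r_{**})\setminus D(w,r)$: the right-hand side vanishes while the left does not); the routine monotonicity goes only the other way, $\widehat{\mu}_{\rho_1}\lesssim\widehat{\mu}_{\rho_2}$ for $\rho_1\le\rho_2$, which is the direction the paper uses to reduce to small $r$ but is useless for shrinking the radius you created. Even as an integral inequality the passage would need a further covering/Fubini argument exploiting the sub-mean value property again, not a comparison of the averages, so the final step, as justified, fails --- and indeed you flagged precisely this non-comparability one sentence earlier.

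The repair is easy and is essentially what the paper does. Either run your construction with covering radius $cr$ for a suitably small constant $c$ (depending only on $t$ and the constants in \eqref{2e2}), so that the enlarged radius $r_{**}$ is at most the target $r$, and handle the restriction $r\le r_0$ in Lemma \ref{2p3} by the legitimate monotonicity reduction (prove \eqref{b1} for small radii; for a larger target radius use $\widehat{\mu}_{r_0}\lesssim\widehat{\mu}_{r}$ together with $|B(z,r_0)|\simeq|B(z,r)|$); or avoid the transfer altogether as the paper does: for $z$ in the small lattice ball $B(z_k,r/2)$ one has $B(z_k,r/2)\subseteq B(z,r)$, hence $\mu(B(z_k,r/2))\le\mu(B(z,r))\simeq\widehat{\mu}_r(z)\,|B(z,r)|$, which attaches the weight to the variable point at exactly the target radius and makes any ``going back'' step unnecessary.
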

\begin{proof}
  Given $0<r_1<r_2<\infty$, we see that $\widehat{\mu}_{r_1}(z)\leq C\widehat{\mu}_{r_2}(z)$ for all $z\in \mathbb{C}^n$ where $C=C(r_1,r_2)>0$ is independent of $z$. Let $r_0$ be the same constant as in Lemma \ref{2p3}, then it suffices to deal with the case $0<r\leq r_0$. Let $\{z_k\}_{k=1}^\infty$ be a $\Psi$-lattice with covering radius $r/4$. By the fact that $(a+b)^p\leq a^p+b^p$ for $a,b>0$ and Lemma \ref{2p3}, we obtain
  \begin{align}\label{b2}
    \left(\int_{\mathbb{C}^n}\left|f(z)e^{-\frac{1}{2}\Psi(|z|^2)}\right|^l\,d\mu(z)\right)^p
    &\leq \sum_{k=1}^{\infty} \left(\int_{B(z_k,r/4)}\left|f(z)e^{-\frac{1}{2}\Psi(|z|^2)}\right|^l\,d\mu(z)\right)^p\notag\\
    &\leq \sum_{k=1}^{\infty}\sup_{z\in B(z_k,r/4)}\left|f(z)e^{-\frac{1}{2}\Psi(|z|^2)}\right|^{lp}\mu(B(z_k,r/2))^p\notag\\
    &\lesssim \sum_{k=1}^{\infty}\int_{B(z_k,r/2)}\left|f(z)e^{-\frac{1}{2}\Psi(|z|^2)}\right|^{lp}\,dV(z)\mu(B(z_k,r/2))^p|B(z_k,r)|^{-1}.
  \end{align}
  Notice that, if $z\in B(z_k,r/2)$, then $B(z_k,r/2)\subseteq B(z,r)$. By this, \eqref{b2} and \eqref{2e1}, we deduce that
  \begin{align}\label{b3}
    \left(\int_{\mathbb{C}^n}\left|f(z)e^{-\frac{1}{2}\Psi(|z|^2)}\right|^l\,d\mu(z)\right)^p
    &\lesssim \sum_{k=1}^{\infty}\int_{B(z_k,r/2)}\left|f(z)e^{-\frac{1}{2}\Psi(|z|^2)}\right|^{lp}\widehat{\mu}_r(z)^p|B(z,r)|^{p-1}\,dV(z)\notag\\
    &\lesssim \int_{\mathbb{C}^n}\left|f(z)e^{-\frac{1}{2}\Psi(|z|^2)}\right|^{lp}\widehat{\mu}_r(z)^p|B(z,r)|^{p-1}\,dV(z).
  \end{align}
    The proof is completed.
\end{proof}
	\begin{proof}[Proof of Theorem \ref{1p3}]
		(A) $\Rightarrow$ (B): We treat the case $2\leq p<\infty$ first. Assume that $r$ is sufficiently small. Let $\{z_k\}_{k=1}^\infty$ be a sequence in $\mathbb{C}^n$ such that $D(z_j,r/m)\cap D(z_k,r/m)=\emptyset$ for $j\neq k$ where $m>0$ is a constant. Let $t$ be a constant as in \eqref{3e18}.
		We define linear operators $J$ and $B$ by setting
		$$Jg=\sum_{k=1}^\infty \langle g,e_k \rangle k_{z_k}\textup{ for } g\in F^2_\Psi,$$
and
        $$B=\sum_{k=1}^{\infty}H_f^*M_{\chi_{D(z_k,4t^2r)}}H_f:\,F^2_\Psi \to F^2_\Psi.$$
Thus, from Lemma \ref{3p4}, we know that, for arbitrary $g\in F^2_\Psi$,
		\begin{align*}
		\langle Bg,g \rangle\leq \sum_{k=1}^{\infty}\|\chi_{D(z_k,4t^2r)}H_fg\|^2_\Psi\lesssim \|H_fg\|^2_\Psi=\langle H_f^*H_fg,g \rangle,
		\end{align*}
which implies that $B \in S_{\frac{p}{2}}$ and
		\begin{align}\label{4e2}
			\|B\|_{S_{\frac{p}{2}}}^{\frac{p}{2}}\leq C\|H_f^*H_f\|_{S_{\frac{p}{2}}}^{\frac{p}{2}}=\||H_f|\|_{S_{p}}^p=\|H_f\|_{S_{p}}^p.
		\end{align}
By \cite[Lemma 8.3]{SY}, $J:F^2_\Psi \to F^2_\Psi$ is bounded. Combining this fact with \eqref{4e2}, we obtain
\begin{align}\label{4e3}
			\|J^*BJ\|_{S_{\frac{p}{2}}}^{\frac{p}{2}}\leq C\|H_f\|_{S_{p}}^p.
		\end{align}
Thus, by using \cite[Theorem 1.27]{Zhu}, we deduce that
		\begin{align}\label{4e3n}
			\sum_{k=1}^{\infty}G_{4t^2r}(f)(z_k)^p&\leq C\sum_{k=1}^{\infty}\|\chi_{D(z_k,4t^2r)}H_fk_{z_k}\|_\Psi^p\notag\\
&=C\sum_{k=1}^{\infty}\langle M_{\chi_{D(z_k,4t^2r)}}H_fJe_{k},H_fJe_k \rangle^\frac{p}{2}\notag\\
&=C\sum_{k=1}^{\infty}\langle J^*H_f^*M_{\chi_{D(z_k,4t^2r)}}H_fJe_{k},e_k \rangle^\frac{p}{2}\notag\\
&\leq C\sum_{k=1}^{\infty}\langle J^*BJe_{k},e_k \rangle^\frac{p}{2}\notag\\
&\leq C\|J^*BJ\|_{S_{\frac{p}{2}}}^\frac{p}{2}.
		\end{align}
This and \eqref{4e3} imply that
\begin{align}\label{4e4}
			\sum_{k=1}^{\infty}G_{4t^2r}(f)(z_k)^p\leq C\|H_f\|_{S_{p}}^p.
		\end{align}
By \eqref{3e18}, for $z\in D(z_k,r)$, we have
$$D(z,r)\subseteq B(z,2tr)\subseteq B(z_k,4tr)\subseteq D(z_k,4t^2r).$$
By this and \eqref{4e4}, we arrive at
		\begin{align}\label{4e5}
			\int_{\mathbb{C}^n}G_r(f)(z)^pd\nu_\Psi(z)
&\leq \sum_{k=1}^{\infty}\int_{D(z_k,r)}G_r(f)(z)^pd\nu_\Psi(z)\notag\\
&\leq C\sum_{k=1}^{\infty}\sup_{z\in D(z_k,r)}G_r(f)(z)^p\notag\\
&\leq C\sum_{k=1}^{\infty}G_{4t^2r}(f)(z_k)^p\notag\\
&\leq C\|H_f\|_{S_p}^p.
		\end{align}
Now, we deal with the case $0<p<2$. Let $\{z_k\}_{k=1}^\infty$ be a $\Psi$-lattice. Given $R>0$, according to Lemma \ref{qt7}, we can partition $\{z_k\}_{k=1}^\infty$ into $M$ subsequences which satisfy that, if $z_j$ and $z_k$ are two different points in the same subsequence, then $\varrho(z_j,z_k)\geq R.$ It is enough to work with one of these subsequences. Without loss of generality, we denote the subsequence by $\{z_k\}_{k=1}^\infty$. Suppose that $\{e_k\}_{k=1}^\infty$ is an orthonormal basis of $F^2_\Psi$. Set
		$$J(g)=\sum_{k=1}^\infty \langle g,e_k \rangle k_{z_k}\textup{ for } g\in F^2_\Psi.$$
	 According to Lemma 8.3 of \cite{SY}, we know that $J$ is bounded on $F^2_\Psi$. Given $r>0$, as is stated in Section 2, we may assume that $D(z_j,r)\cap D(z_k,r)=\emptyset$ for $j\neq k$ when $R$ is chosen to be sufficiently large. Set
		\begin{align*}
			h_k=\left\{\begin{aligned}
                &\frac{\chi_{D(z_k,r)}fk_{z_k}-P_{z_k,r}(fk_{z_k})}
                {||\chi_{D(z_k,r)}fk_{z_k}-P_{z_k,r}(fk_{z_k})||_\Psi},
                \textup{  \,if } ||\chi_{D(z_k,r)}fk_{z_k}-P_{z_k,r}(fk_{z_k})||_{\Psi} \neq 0,\\
                &0,\quad\qquad\qquad\qquad\qquad\qquad\qquad \textup{if } ||\chi_{D(z_k,r)}fk_{z_k}-P_{z_k,r}(fk_{z_k})||_{\Psi}= 0.
			             \end{aligned}
                \right.	
		\end{align*}
		Clearly $||h_j||^2_\Psi \leq 1$. Notice that $\textup{supp }h_k\subseteq D(z_k,r)$ and $D(z_j,r) \cap D(z_k,r)=\emptyset $ for $j\neq k$. It follows that $\langle h_j,h_k \rangle =0$ for $j \neq k$. Define an operator $B$ on $F^2_\Psi$ as
		$$Bg=\sum_{k=1}^{\infty} \langle g,h_k \rangle e_k\textup{ for }g\in F^2_\Psi.$$
		It is easy to see that $B$ is bounded on $F^2_\Psi$ and $||B|| \leq 1$.
		Moreover,
		\begin{align*}
			BH_fJg=\sum_{k=1}^{\infty} \langle H_fJg, h_k \rangle e_k
			=\sum_{k=1}^\infty \sum_{j=1}^\infty \langle H_fk_{z_j}, h_k \rangle \langle g,e_j \rangle  e_k\textup{ for }g\in F^2_\Psi.
		\end{align*}
		Since $J$ and $B$ are bounded, we see that
		\begin{align}\label{t19}
			||BH_fJ||_{S_p} \leq C||H_f||_{S_p}.
		\end{align}
We decompose the operator $BH_fJ$ to the diagonal part defined by
		$$Yg=\sum_{k=1}^\infty \langle H_fk_{z_k}, h_k \rangle \langle g,e_k \rangle  e_k\textup{ for }g\in F^2_\Psi,$$
		and the non-diagonal part defined by
		$$Zg=\sum_{k=1}^\infty \sum_{\{j:j\neq k\}}\langle H_fk_{z_j}, h_k \rangle \langle g,e_j \rangle  e_k\textup{ for }g\in F^2_\Psi.$$
		Thus
\begin{align}\label{t20}
  \|Y\|^p_{S_p} \lesssim \|BH_fJ\|^p_{S_p}+\|Z\|^p_{S_p}.
\end{align}
		By Lemma \ref{2p1} and Lemma \ref{2p2}, we know that, if $r$ is sufficiently small and $w\in D(z_k,r)$, then
\begin{align}\label{t21}
  k_{z_k}(w)\simeq K(w,w)^{\frac{1}{2}}\simeq |D(w,r)|^{-\frac{1}{2}}e^{\frac{1}{2}\Psi(|w|^2)}>0.
\end{align}
Therefore, $k_{z_k}^{-1} \in H(D(z_k,r))$. From this fact, \eqref{t21} and Lemma \ref{qt10}, we deduce that
		\begin{align}\label{t22}
			||Y||^p_{S_p}&=\sum_{k=1}^\infty |\langle H_fk_{z_k},h_k \rangle|^p=\sum_{k=1}^\infty |\langle fk_{z_k}-P(fk_{z_k}),h_k \rangle|^p\notag\\
			&=\sum_{k=1}^\infty |\langle \chi_{D(z_k,r)}fk_{z_k}-P_{z_k,r}(fk_{z_k}),h_k \rangle|^p\notag\\
			&=\sum_{k=1}^\infty ||\chi_{D(z_k,r)}fk_{z_k}-P_{z_k,r}(fk_{z_k})||_{\Psi}^{p}\notag\\
			&=\sum_{k=1}^\infty \left\{\int_{D(z_k,r)}|f(w)k_{z_k}(w)-P_{z_k,r}(fk_{z_k})(w)|^2e^{-\Psi(|w|^2)}dV(w)\right\}^{p/2}\notag\\
			&\simeq \sum_{k=1}^\infty \left\{\frac{1}{|D(z_k,r)|}\int_{D(z_k,r)}|f(w)-k_{z_k}^{-1}(w)P_{z_k,r}(fk_{z_k})(w)|^2dV(w)\right\}^{p/2}\notag\\
			&\geq \sum_{k=1}^\infty G_r(f)(z_k)^p.
		\end{align}
By Proposition 1.29 of \cite{Zhu}, Lemma \ref{qt10} and Cauchy-Schwarz inequality, we have
		\begin{align*}
			||Z||^p_{S_p} &\leq \sum_{j=1}^\infty\sum_{k=1}^\infty |\langle Ze_j, e_k \rangle|^p=\sum_{k=1}^\infty \sum_{\{j:j\neq k\}} |\langle H_fk_{z_j}, h_k \rangle |^p\\
			&=\sum_{k=1}^\infty \sum_{\{j:j\neq k\}} |\langle \chi_{D(z_k,r)}fk_{z_j}-P_{z_k,r}fk_{z_j}, h_k \rangle |^p\\
			& \leq \sum_{k=1}^\infty \sum_{\{j:j\neq k\}} || \chi_{D(z_k,r)}fk_{z_j}-P_{z_k,r}fk_{z_j}||_{\Psi}^p\\
			&=\sum_{k=1}^\infty \sum_{\{j:j\neq k\}} \left\{\int_{D(z_k,r)}|f(w)k_{z_j}(w)-P_{z_k,r}(fk_{z_j})(w)|^2e^{-\Psi(|w|^2)}dV(w)\right\}^{p/2}\\
			&\leq \sum_{k=1}^\infty \sum_{\{j:j\neq k\}} \left\{\int_{D(z_k,r)}|f(w)k_{z_j}(w)-k_{z_j}(w)D_{z_k,r}(f)(w)|^2e^{-\Psi(|w|^2)}dV(w)\right\}^{p/2}.	
		\end{align*}
		where $D_{z,r}$ is the projection of $L^2(D(z,r),dV)$ onto $A^2(D(z,r),dV)$.
By this, Lemma \ref{ex3p6} and Lemma \ref{qt8}, we obtain
		\begin{align}\label{t23}
			||Z||^p_{S_p}
			&\leq \sum_{k=1}^\infty \sum_{\{j:j\neq k\}} \left\{\int_{D(z_k,r)}|k_{z_j}(w)|^2e^{-\Psi(|w|^2)}|f(w)-D_{z_k,r}(f)(w)|^2dV(w)\right\}^{p/2}\notag\\
			&\lesssim \sum_{k=1}^\infty \sum_{\{j:j\neq k\}} |D(z_k,r)|^{\frac{p}{2}} \left\{\frac{1}{|D(z_k,r)|}\int_{D(z_k,r)}|f(w)-D_{z_k,r}(f)(w)|^2dV(w)\right\}^{p/2}\notag \\
&\qquad\qquad\times \sup_{w\in D(z_k,r)}|k_{z_j}(w)|^pe^{-\frac{p}{2}\Psi(|w|^2)}\notag\\
			&\simeq \sum_{k=1}^\infty G_r(f)(z_k)^p|D(z_k,r)|^{\frac{p}{2}}\sum_{\{j:j\neq k\}}\sup_{w\in D(z_k,r)}|k_{z_j}(w)|^pe^{-\frac{p}{2}\Psi(|w|^2)}\notag\\
&\lesssim o(1)\sum_{k=1}^\infty G_r(f)(z_k)^p,
		\end{align}
as $R\to \infty$. From \eqref{t19}, \eqref{t20}, \eqref{t22} and \eqref{t23}, we see that, if $R$ is chosen to be sufficiently large, then there exists $r>0$ such that
\begin{align}\label{t24}
  \sum_{k=1}^\infty G_r(f)(z_k)^p\lesssim \|H_f\|_{S_p}^p.
\end{align}
From this, with the same approach as in \eqref{4e5}, we get
		\begin{align}\label{t25}
			\int_{\mathbb{C}^n}G_r(f)(z)^pd\nu_\Psi(z)\lesssim C\|H_f\|_{S_p}^p.
		\end{align}
		To prove that (B) implies (A), we decompose $f$ as in Lemma \ref{3p7}. By the assumption $G_r(f)\in L^p(\mathbb{C}^n,d\nu_\Psi)$, we know that
\begin{align}\label{4e6}
  f_1\in C^2(\mathbb{C}^n),\qquad
M_{\frac{r}{16t^4}}\left(\frac{|\overline{\partial}f_1(\cdot)|}{\Phi'(|\cdot|^2)^{\frac{1}{2}}}
+\frac{|\overline{\partial}f_1(\cdot)\land \overline{\partial}|\cdot||}{\Psi'(|\cdot|^2)^{\frac{1}{2}}}\right)(z)\lesssim
G_r(f)(z),
\end{align}
and
\begin{align}\label{4e7}
  f_2\in L^2_{\mathrm{loc}}(\mathbb{C}^n),\qquad M_{\frac{r}{16t^4}}(f_2)(z)\lesssim G_r(f)(z).
\end{align}
Then $H_{f_1}$ and $H_{f_2}$ are well defined on $\Gamma= \text{span}\{K_z:z\in \mathbb{C}^n\}$. In fact, by Lemma \ref{2p4}, for $g\in \Gamma$ and $z\in \mathbb{C}^n$, we have
\begin{align}\label{4e8}
  \int_{\mathbb{C}^n}|f_2(w)g(w)K_z(w)|e^{-\Psi(|w|^2)}dV(w)\leq C\int_{\mathbb{C}^n}|g(w)K_z(w)|e^{-\Psi(|w|^2)}\widehat{|f_2|}_{\frac{r}{16t^4}}(w)dV(w).
\end{align}
H\"{o}lder's inequality yields
\begin{align}\label{4e9}
  \widehat{|f_2|}_{\frac{r}{16t^4}}(w)\leq M_{\frac{r}{16t^4}}(|f_2|)(w).
\end{align}
Notice that
\begin{align}\label{4e10}
  e^{-\Psi(|w|^2)}\leq 1\lesssim \Psi'(|w|^2)\Phi'(|w|^2).
\end{align}
Let $p'$ be the conjugate number of $p$. Then by \eqref{4e8}, \eqref{4e9}, \eqref{4e10} and using H\"{o}lder's inequality twice, we arrive at
\begin{align*}
  &\int_{\mathbb{C}^n}|f_2(w)g(w)K_z(w)|e^{-\Psi(|w|^2)}dV(w)\\
  &\leq C\left\{\int_{\mathbb{C}^n}|M_{\frac{r}{16t^4}}(|f_2|)(w)|^pe^{-\Psi(|w|^2)}dV(w)\right\}^{\frac{1}{p}}
  \left\{\int_{\mathbb{C}^n}|g(w)K_z(w)|^{p'}e^{-\Psi(|w|^2)}dV(w)\right\}^{\frac{1}{p'}}\\
  &\leq C\left\{\int_{\mathbb{C}^n}|M_{\frac{r}{16t^4}}(|f_2|)(w)|^p\,d\nu_\Psi(w)\right\}^{\frac{1}{p}}\\
  &\qquad\times\left\{\int_{\mathbb{C}^n}|g(w)|^{2p'}e^{-\Psi(|w|^2)}dV(w)\right\}^{\frac{1}{2p'}}
  \left\{\int_{\mathbb{C}^n}|K_z(w)|^{2p'}e^{-\Psi(|w|^2)}dV(w)\right\}^{\frac{1}{2p'}}\\
  &<\infty,
\end{align*}
where in the last inequality we used Lemma \ref{qt5}. This implies that $H_{f_2}$, and hence also $H_{f_1}=H_f-H_{f_2}$, are both well defined on $\Gamma.$
We consider multiplication operators on $F^2_\Psi$. Notice that the multiplication operator with symbol $\phi$ is given by
		$$M_\phi g=\phi g\textup{ for } g \in F^2_\Psi.$$
		For any $g,h \in F^2_\Psi$, there holds
		\begin{align*}
			\langle M^*_\phi M_\phi g, h\rangle =\langle  M_\phi g, M_\phi h\rangle =\langle   T_{|\phi|^2}g, h\rangle .
		\end{align*}
		Thus, $M^*_\phi M_\phi=T_{|\phi|^2}$ on $F^2_\Psi$.
		By Theorem 1.26 of \cite{Zhu}, we know that $M_\phi \in S_p$ if and only if  $M^*_\phi M_\phi=T_{|\phi|^2} \in S_{\frac{p}{2}}$.
		We are interested in the following two choices: $\phi(z)=f_2(z)$
		and
\begin{align}\label{4e11}
  \phi(z)=\frac{|\bar{\partial}f_1(z)|}{\Phi'(|z|^2)^{1/2}}+\frac{|\bar{\partial}f_1(z)\wedge \bar{\partial}|z||}{\Psi'(|z|^2)^{1/2}}.
\end{align}
Obviously, $M_{\frac{r}{16t^4}}(\phi)(z)^2=\widehat{|\phi|^2}_{\frac{r}{16t^4}}(z).$ Thus we have
\begin{align}\label{4e11n}
\left\|\widehat{|\phi|^2}_{\frac{r}{16t^4}}\right\|^{\frac{1}{2}}_{L^{\frac{p}{2}}(\mathbb{C}^n,d\nu_\Psi)}=
\left\|M_{\frac{r}{16t^4}}(\phi)\right\|_{L^{p}(\mathbb{C}^n,d\nu_\Psi)}.
\end{align}
		By Theorem \ref{4p5}, we know that $M^*_{f_2} M_{f_2}=T_{|f_2|^2}\in S_{\frac{p}{2}}$ and
		\begin{align}\label{4e12}
\|M^*_{\phi} M_{\phi}\|_{S_{\frac{p}{2}}}\simeq \left\|\widehat{|\phi|^2}_{\frac{r}{16t^4}}\right\|_{L^{\frac{p}{2}}(\mathbb{C}^n,d\nu_\Psi)}.
		\end{align}
If $\phi(z)=f_2(z)$, then by the definition $H_{f_2}=(I-P)M_{f_2}$, we deduce that for any $g\in F^2_\Psi$,
\begin{align}\label{ex4e12}
  \langle H_{f_2}^*H_{f_2}g,g\rangle \lesssim \langle M^*_{f_2} M_{f_2}g,g\rangle.
\end{align}
Combining this with \eqref{4e7}, \eqref{4e11n} and \eqref{4e12}, we deduce that
\begin{align}\label{ex4e12n}
  ||H_{f_2}||_{S_p}^p=||H^*_{f_2}H_{f_2}||_{S_{\frac{p}{2}}}^{\frac{p}{2}}\leq C\|M^*_{f_2} M_{f_2}\|_{S_{\frac{p}{2}}}^{\frac{p}{2}}\leq C \left\|M_{\frac{r}{16t^4}}(f_2)\right\|^p_{L^{p}(\mathbb{C}^n,d\nu_\Psi)}\lesssim \left\|G_r(f)\right\|^p_{L^{p}(\mathbb{C}^n,d\nu_\Psi)}.
\end{align}
		Now we consider the choice of (\ref{4e11}). Clearly for $g\in \Gamma$, $g\overline{\partial}f_1$ is a $\overline{\partial}$-closed (0,1) form with $L^2_{\mathrm{loc}}$ coefficients. It follows from Lemma \ref{3p8} that for any $ g\in \Gamma,$ there is a solution $S(g\overline{\partial}f_1)$ of the equation $\overline{\partial}v=g\overline{\partial}f_1$ such that
\begin{align}\label{4e13}
  \int_{\mathbb{C}^n}|S(g\overline{\partial}f_1)(z)|^2e^{-\Psi(|z|^2)}dV(z)\leq \int_{\mathbb{C}^n}\left(\frac{|g(z)\overline{\partial}f_1(z)|^2}{\Phi '(|z|^2)}+\frac{|g(z)\overline{\partial}f_1(z)\land\overline{\partial}|z||^2}{\Psi '(|z|^2)}\right)e^{-\Psi(|z|^2)}dV(z).
\end{align}
On the other hand, we have
	\begin{align}\label{4e14}
        \|H_{f_1}g\|^2_\Psi=\|S(g\overline{\partial}f_1)\|^2_\Psi-\|H_{f_1}g-S(g\overline{\partial}f_1)\|^2_\Psi\leq \|S(g\overline{\partial}f_1)\|^2_\Psi,
	\end{align}
which, together with \eqref{4e13}, further implies that
\begin{align}\label{ex4e14}
  \langle H_{f_1}^*H_{f_1}g,g\rangle= ||H_{f_1}g||^2_\Psi\lesssim ||M_\phi g||^2_\Psi=\langle M^*_{\phi} M_{\phi}g,g\rangle.
\end{align}
		Thus, we have
$$||H_{f_1}||_{S_p}^p=||H^*_{f_1}H_{f_1}||_{S_{\frac{p}{2}}}^{\frac{p}{2}}\leq C\|M^*_{\phi} M_{\phi}\|_{S_{\frac{p}{2}}}^{\frac{p}{2}}.$$	
This and \eqref{4e6} yield
\begin{align}\label{4e15}
  ||H_{f_1}||_{S_p}^p\leq C \left\|M_{\frac{r}{16t^4}}\left(\frac{|\overline{\partial}f_1(\cdot)|}{\Phi'(|\cdot|^2)^{\frac{1}{2}}}
+\frac{|\overline{\partial}f_1(\cdot)\land \overline{\partial}|\cdot||}{\Psi'(|\cdot|^2)^{\frac{1}{2}}}\right)\right\|^p_{L^{p}(\mathbb{C}^n,d\nu_\Psi)}\lesssim \left\|G_r(f)\right\|^p_{L^{p}(\mathbb{C}^n,d\nu_\Psi)}.
\end{align}
\eqref{ex4e12n} and \eqref{4e15} yield that $H_f=H_{f_1}+H_{f_2}$ is in $S_p$. Furthermore,
\begin{align}\label{4e16}
  ||H_{f}||_{S_p}\lesssim \left\|G_r(f)\right\|_{L^{p}(\mathbb{C}^n,d\nu_\Psi)}.
\end{align}
(B)$\Rightarrow$(C): We decompose $f=f_1+f_2$ as in Lemma \ref{3p7}. From \eqref{ex4e12} and \eqref{ex4e14}, we know that
\begin{align}\label{5e1}
  \|H_{f_1}k_z\|_\Psi^2\lesssim \langle M^*_{\phi} M_{\phi}k_z,k_z \rangle=\langle T_{\phi^2}k_z,k_z \rangle,
\end{align}
where
\begin{align*}
  \phi(z)=\frac{|\bar{\partial}f_1(z)|}{\Phi'(|z|^2)^{1/2}}+\frac{|\bar{\partial}f_1(z)\wedge \bar{\partial}|z||}{\Psi'(|z|^2)^{1/2}},
\end{align*}
and
\begin{align}\label{5e2}
  \|H_{f_2}k_z\|_\Psi^2\lesssim \langle M^*_{f_2} M_{f_2}k_z,k_z \rangle=\langle T_{|f_2|^2}k_z,k_z \rangle.
\end{align}
Assume $0<p<2$. From \eqref{5e1} and Lemma \ref{qt11} with $d\mu(w)=|\phi(w)|^2\,dV(w)$, we know that
\begin{align}\label{b4}
  \int_{\mathbb{C}^n}\|H_{f_1}k_z\|_\Psi^p \,d\nu_\Psi(z)&\lesssim \int_{\mathbb{C}^n}\left(\int_{\mathbb{C}^n}|k_z(w)|^2e^{-\Psi(|w|^2)}|\phi(w)|^2\,dV(w)\right)^{\frac{p}{2}}|B(z,r)|^{-1}\,dV(z)\notag\\
  &\lesssim \int_{\mathbb{C}^n}\int_{\mathbb{C}^n}|k_z(w)|^pe^{-\frac{p}{2}\Psi(|w|^2)}M_r(\phi)(w)^p|B(w,r)|^{\frac{p}{2}-1}\,dV(w)|B(z,r)|^{-1}\,dV(z).
\end{align}
From this, Fubini's theorem, Lemma \ref{2p2}, Lemma \ref{qt5}, \eqref{2e3} and \eqref{4e6}, we have
\begin{align}\label{b5}
  \int_{\mathbb{C}^n}\|H_{f_1}k_z\|_\Psi^p \,d\nu_\Psi(z)&\lesssim \int_{\mathbb{C}^n}e^{-\frac{p}{2}\Psi(|w|^2)}M_r(\phi)(w)^p|B(w,r)|^{\frac{p}{2}-1}\,dV(w)\int_{\mathbb{C}^n}|k_z(w)|^p|B(z,r)|^{-1}\,dV(z)\notag\\
  &\simeq \int_{\mathbb{C}^n}e^{-\frac{p}{2}\Psi(|w|^2)}M_r(\phi)(w)^p|B(w,r)|^{\frac{p}{2}-1}\,dV(w)\int_{\mathbb{C}^n}|K(z,w)|^pe^{-\frac{p}{2}\Psi(|z|^2)}|B(z,r)|^{\frac{p}{2}-1}\,dV(z)\notag\\
  &\lesssim \int_{\mathbb{C}^n}M_r(\phi)(w)^p\,d\nu_\Psi(w)\notag\\
  &\lesssim \int_{\mathbb{C}^n}G_r(f)(w)^p\,d\nu_\Psi(w).
\end{align}
Similarly, we have
\begin{align}\label{b6}
  \int_{\mathbb{C}^n}\|H_{f_2}k_z\|_\Psi^p \,d\nu_\Psi(z)\lesssim  \int_{\mathbb{C}^n}M_r(f_2)(w)^p\,d\nu_\Psi(w)
  \lesssim \int_{\mathbb{C}^n}G_r(f)(w)^p\,d\nu_\Psi(w).
\end{align}

Suppose $2\leq p<\infty.$ \eqref{5e1}, \eqref{5e2} and Proposition 1.31 of \cite{Zhu} yield that
\begin{align}\label{5e3}
  \|H_{f_1}k_z\|_\Psi^p\lesssim \langle T_{\phi^2}^{\frac{p}{2}}k_z,k_z \rangle,
\end{align}
and
\begin{align}\label{5e4}
  \|H_{f_2}k_z\|_\Psi^p\lesssim \langle T_{|f_2|^2}^{\frac{p}{2}}k_z,k_z \rangle.
\end{align}
From Lemma 8.4 of \cite{SY} and Lemma \ref{2p2}, we know that
\begin{align}\label{5e5}
  \left\|T_{\phi^2}^{\frac{p}{2}}\right\|_{S_1}=\int_{\mathbb{C}^n}\langle T_{\phi^2}^{\frac{p}{2}}k_z,k_z \rangle \,d\nu_\Psi(z)\textup{ and } \left\|T_{|f_2|^2}^{\frac{p}{2}}\right\|_{S_1}=\int_{\mathbb{C}^n}\langle T_{|f_2|^2}^{\frac{p}{2}}k_z,k_z \rangle \,d\nu_\Psi(z).
\end{align}
By \eqref{5e3}, \eqref{5e5} and Theorem \ref{4p5}, we have
\begin{align}\label{5e6}
  \int_{\mathbb{C}^n}\|H_{f_1}k_z\|_\Psi^p \,d\nu_\Psi(z)&\lesssim \left\|T_{\phi^2}^{\frac{p}{2}}\right\|_{S_1}=\left\|T_{\phi^2}\right\|^{\frac{p}{2}}_{S_{\frac{p}{2}}}
  \lesssim \left\|\widehat{|\phi|^2}_{\frac{r}{16t^4}}\right\|^{\frac{p}{2}}_{L^{\frac{p}{2}}(\mathbb{C}^n,d\nu_\Psi)}
  \notag\\
  &=
\left\|M_{\frac{r}{16t^4}}(\phi)\right\|^p_{L^{p}(\mathbb{C}^n,d\nu_\Psi)}\lesssim \left\|G_r(f)\right\|^p_{L^{p}(\mathbb{C}^n,d\nu_\Psi)}.
\end{align}
Similarly, by \eqref{5e4}, \eqref{5e5} and Theorem \ref{4p5}, we obtain
\begin{align}\label{5e7}
  \int_{\mathbb{C}^n}\|H_{f_2}k_z\|_\Psi^p \,d\nu_\Psi(z)&\lesssim \left\|T_{|f_2|^2}^{\frac{p}{2}}\right\|_{S_1}=\left\|T_{|f_2|^2}\right\|^{\frac{p}{2}}_{S_{\frac{p}{2}}}
  \lesssim \left\|\widehat{|f_2|^2}_{\frac{r}{16t^4}}\right\|^{\frac{p}{2}}_{L^{\frac{p}{2}}(\mathbb{C}^n,d\nu_\Psi)}
  \notag\\
  &=
\left\|M_{\frac{r}{16t^4}}(f_2)\right\|^p_{L^{p}(\mathbb{C}^n,d\nu_\Psi)}\lesssim \left\|G_r(f)\right\|^p_{L^{p}(\mathbb{C}^n,d\nu_\Psi)}.
\end{align}
By \eqref{5e6} and \eqref{5e7}, we see that
\begin{align}\label{5e8}
  \int_{\mathbb{C}^n}\|H_{f}k_z\|_\Psi^p \,d\nu_\Psi(z)\lesssim \left\|G_r(f)\right\|^p_{L^{p}(\mathbb{C}^n,d\nu_\Psi)}.
\end{align}
(C)$\Rightarrow$(B): From the proof of Theorem \ref{1p1}, we see that there is some $r_0>0$ such that for $0<r\leq r_0$
\begin{align}\label{5e9}
  G_r(f)(z)\lesssim \|H_fk_z\|_\Psi.
\end{align}
This implies that
\begin{align}\label{5e10}
  \int_{\mathbb{C}^n}G_r(f)(z)^p \,d\nu_\Psi(z)\lesssim \int_{\mathbb{C}^n}\|H_fk_z\|_\Psi^p \,d\nu_\Psi(z).
\end{align}
The proof is completed.
\end{proof}


\begin{thebibliography}{00}
    \vspace{-0.3cm}
    \bibitem{AFJ}

J. Arazy, S.D. Fisher, S. Janson, J. Peetre, Membership of Hankel operators on the ball in unitary ideals. J. Lond. Math. Soc. 43 (1991), 485-508

    \vspace{-0.3cm}
    \bibitem{AFP}
J. Arazy, S.D. Fisher, J. Peetre, Hankel operators on weighted Bergman spaces. Amer. J. Math., 110 (1988), 989-1053

    \vspace{-0.3cm}
    \bibitem{APP}
H. Arrousi, I. Park, J. Pau, Schatten class Toeplitz operators acting on large weighted Bergman spaces. Stud. Math., 229 (2015), 203-221

    \vspace{-0.3cm}
    \bibitem{AT}
H. Arroussi, C. Tong, Weighted composition operators between large Fock
spaces in several complex variables. J. Funct. Anal., 277 (2019), 3436-3466

    \vspace{-0.3cm}
    \bibitem{B}
W. Bauer, Hilbert-Schmidt Hankel operators on the Segal-Bargmann space, Proc. Amer. Math. Soc. 132 (2004), 2989-2996

    \vspace{-0.3cm}
    \bibitem{B2}
 W. Bauer,
Mean oscillation and Hankel operators on the Segal-Bargmann space. Integr. Equ. Oper.
Theory, 52 (2005), 1-15

    \vspace{-0.3cm}
    \bibitem{BBC}
D. B\'{e}koll\'{e}, C.A. Berger, L.A. Coburn, K. Zhu, BMO in the Bergman metric on bounded symmetric domains. J. Funct. Anal., 93 (1990), 310-350

    \vspace{-0.3cm}
    \bibitem{BC}
H. Bommier-Hato, O. Constantin, Big Hankel Operators on Vector-Valued Fock Spaces in $\mathbb{C}^d$, Integr. Equ. Oper. Theory 90 (2018)

    \vspace{-0.3cm}
    \bibitem{BCZ}
C.A. Berger, L.A. Coburn, K. Zhu, BMO on the Bergman spaces of the classical domains. Bull. Amer. Math. Soc. (N.S.), 17 (1987), 133-136

    \vspace{-0.3cm}
    \bibitem{CO}
O. Constantin, J. Ortega-Cerd\`{a}, Some spectral properties of the canonical solution operator to $\overline{\partial}$ on weighted
Fock spaces, J. Math. Anal. Appl. 377 (2011), 353-361

    \vspace{-0.3cm}
    \bibitem{D}
    J.P. Demailly, Estimations $L^2$ pour l'op\'{e}rateur $\overline{\partial}$ d'un fibre vectoriel holomorphe semi-positif au-dessus d'une variet\'{e} K\"{a}hlerienne compl\`{e}te, Ann. Sci. \'{E}cole Norm. Sup 15 (1982), 457-511

    \vspace{-0.3cm}
    \bibitem{FX}
Q. Fang, J. Xia, Hankel operators on weighted Bergman spaces and norm ideals, Complex Anal. Oper. Theory 12 (2018), 629-668

    \vspace{-0.3cm}
    \bibitem{H}
	L. H\"ormander, An introduction to complex analysis in several variables, D. Van
Nostrand Co., Inc., Princeton (NJ)-Toronto (Ont.), London, (1966)

    \vspace{-0.3cm}
    \bibitem{HL}
Z. Hu, X. Lv, Hankel operators on weighted Fock spaces (in Chinese). Sci. Sin. Math., 46 (2016), 141-156

    \vspace{-0.3cm}
	\bibitem{HV}
	Z. Hu, J.A. Virtanen, IDA and Hankel operators on Fock spaces, Anal. PDE, to appear, arXiv:2111.04821

    \vspace{-0.3cm}
	\bibitem{HV2}
Z. Hu, J.A. Virtanen, Schatten class Hankel operators on the Segal-Bargmann space and the Berger-Coburn phenomenon, Trans. Amer. Math. Soc. 375 (2022), 3733-3753

    \vspace{-0.3cm}
    \bibitem{HW}
Z. Hu, E. Wang, Hankel operators between Fock spaces. Integr. Equ. Oper. Theory, 90 (2018), 1-20

    \vspace{-0.3cm}
	\bibitem{I}
J. Isralowitz, Schatten $p$ class Hankel operators on the Segal-Bargmann space $H^2(\mathbb{C}^n, d\mu)$ for $0<p<1$, J. Operator Theory 66 (2011), 145-160
    \vspace{-0.3cm}
	\bibitem{IVW}
J. Isralowitz, J.A. Virtanen, L. Wolf,
Schatten class Toeplitz operators on generalized Fock spaces, J. Math. Anal. Appl. 421 (2015), 329-337
    \vspace{-0.3cm}
	\bibitem{L}
D.H. Luecking, Characterizations of certain classes of Hankel operators on the Bergman spaces of the unit disk, J. Funct. Anal. 110 (1992), 247-271

    \vspace{-0.3cm}
	\bibitem{LL}
H. Li, D.H. Luecking, Schatten class of Hankel and Toeplitz operators on the
Bergman space of strongly pseudoconvex domains, Multivariable operator theory (Seattle, WA, 1993), Contemp. Math., vol. 185, Amer. Math. Soc., Providence, RI, (1995), 237-257

    \vspace{-0.3cm}
	\bibitem{LR}
P. Lin, R. Rochberg, Trace ideal criteria for Toeplitz and Hankel operators on the weighted Bergman spaces
with exponential type weights, Pacific J. Math. 173 (1996), 127-146

    \vspace{-0.3cm}
	\bibitem{M}
J. Miao, Schatten Class Hankel Operators on the Harmonic Bergman Space of the Unit Ball, Integr. Equ.
Oper. Theory 59 (2007), 53-65.

    \vspace{-0.3cm}
	\bibitem{P}
J. Pau, Characterization of Schatten-class Hankel operators on weighted Bergman spaces, Duke Math. J. 165 (2016), 2771-2791

    \vspace{-0.3cm}
	\bibitem{PPR}
J.A. Pel\'{a}ez, A. Per\"{a}l\"{a}, J. R\"{a}tty\"{a}, Hankel operators induced by radial Bekoll-Bonami weights on Bergman
spaces. Math. Z., 296 (2020), 211-238

    \vspace{-0.3cm}
	\bibitem{PZZ}
J. Pau, R. Zhao, K. Zhu, Weighted BMO and Hankel operators between Bergman spaces. Indiana
Univ. Math. J., 65 (2016), 1639-1673

    \vspace{-0.3cm}	
	\bibitem{SY}
	K. Seip, E.H. Youssfi, Hankel Operators on Fock Spaces and Related Bergman Kernel Estimates, J. Geom. Anal. 23 (2013), 170-201
	
    \vspace{-0.3cm}	
	\bibitem{TW}
Z. Tu, X. Wang, Mean Oscillation and Hankel Operators on Fock-type Spaces, Acta Math. Sinica(English Series) 37 (2021), 1089-1108

    \vspace{-0.3cm}
	\bibitem{WCZ}
X. Wang, G. Cao, K. Zhu, BMO and Hankel operators on Fock-type spaces. J. Geom. Anal. 25 (2015), 1650-1665

    \vspace{-0.3cm}
	\bibitem{WTH}
	X. Wang, Z. Tu, Z. Hu, Bounded and Compact Toeplitz Operators with Positive Measure Symbol on Fock-Type Spaces, J. Geom. Anal. 30 (2020), 4324-4355

    \vspace{-0.3cm}
    \bibitem{X}
J. Xia, Hankel operators in the Bergman space and Schatten $p$-classes: the case $1<p<2$, Proc. Amer. Math.
Soc. 129 (2001), 3559-3567

    \vspace{-0.3cm}
    \bibitem{XZ}
J. Xia and D. Zheng, Standard deviation and Schatten class Hankel operators
on the Segal-Bargmann space, Indiana Univ. Math. J. 53 (2004), 1381-1399

    \vspace{-0.3cm}
    \bibitem{ZWH}
Z. Zeng, X. Wang, Z. Hu, Schatten class Hankel operators on exponential Bergman spaces, Rev. Real Acad. Cienc Exactas Fis. Nat. Ser. A-Mat. 117 (2023), 1-19

    \vspace{-0.3cm}
    \bibitem{Zhu2}
K. Zhu, Schatten class Hankel operators on the Bergman space of the unit ball, Amer. J. Math. 113 (1991),
147-167

    \vspace{-0.3cm}
    \bibitem{Zhu}
    K. Zhu, Operator Theory in Function Spaces, Mathematical Surveys and Monographs, 2nd edn, American Mathematical Society, Providence (2007)

    \vspace{-0.3cm}
	\bibitem{Z}
	K. Zhu, Analysis on Fock Spaces, Springer Verlag, New York, (2012)
	
\end{thebibliography}
\end{document}